\newcommandx{\todoN}[2][1=]{\todo[linecolor=red,backgroundcolor=white,bordercolor=red,#1]{#2}}			%Nicola's todo notes in Red
\newcommandx{\todoA}[2][1=]{\todo[linecolor=green,backgroundcolor=white,bordercolor=green,#1]{#2}}			%Allison's todo notes in Green
\def\namedlabel#1#2{\begingroup
    #2%
    \def\@currentlabel{#2}%
    \phantomsection\label{#1}\endgroup
}
	\tikzstyle{every picture}=[scale=.35,inner sep=0]
\newcommand{\rotsimeq}{\rotatebox[origin=c]{-90}{$\simeq$}}   %for vertical isomorphism
\newtheorem{thm}{Theorem}
\theoremstyle{definition}
\newtheorem{q}{Question}
\theoremstyle{theorem}
\newtheorem{theorem}{Theorem}[section]
\newtheorem{lemma}[theorem]{Lemma}
\theoremstyle{definition}
\theoremstyle{remark}
\newtheorem{remark}[theorem]{Remark}
\numberwithin{equation}{section}
\newcommand{\spinc}{\operatorname{Spin}^c}
\newcommand{\Zdhat}{\widehat{\vphantom{\rule{5pt}{10pt}}\smash{\widehat{Z}}\,}\!}
\begin{document}

\title[Gluing and splitting plumbed 3-manifolds]{On gluing and splitting \\ series invariants of plumbed 3-manifolds}

\author[A.H.~Moore]{Allison H.~Moore}
\address{Allison H.~Moore
\newline \indent Department of Mathematics \& Applied Mathematics
\newline \indent Virginia Commonwealth University, Richmond, VA 23284}
\email{moorea14@vcu.edu}

\author[N.~Tarasca]{Nicola Tarasca}
\address{Nicola Tarasca 
\newline \indent Department of Mathematics \& Applied Mathematics
\newline \indent Virginia Commonwealth University, Richmond, VA 23284}
\email{tarascan@vcu.edu}

\subjclass[2020]{
57K31 (primary), % Invariants of 3-manifolds (including skein modules, character varieties)
57K16, % 	Finite-type and quantum invariants, topological quantum field theories (TQFT)
17B22 % Root systems
%53C27, % 	Spin and Spin${}^c$ geometry
%11F27, % Theta series; Weil representation; theta correspondences
%57K18 % Homology theories in knot theory (Khovanov, Heegaard-Floer, etc.)
(secondary)}
\keywords{Quantum invariants of 3-manifolds, plumbed $3$-manifolds, $\mathrm{Spin}^c$-structures,  root systems, Kostant partition functions}

\begin{abstract}
We study series invariants for plumbed 3-manifolds and knot complements twisted by a root lattice.
Our series recover recent results of Gukov-Pei-Putrov-Vafa, Gukov-Manolescu, Park, and Ri and apply more generally to 3-manifolds which are not necessarily negative definite. We show that our series verify certain gluing and splitting properties related to the corresponding operations on 3-manifolds. We conclude with an explicit description of the case of lens spaces and Brieskorn spheres.
\end{abstract}

\vspace*{-2pc}

\maketitle

\vspace{-2pc}

\section{Introduction}
A new invariant of negative-definite plumbed 3-manifolds has recently been introduced in Gukov-Pei-Putrov-Vafa \cite{gukov2020bps}. It takes the form of a Laurent $q$-series denoted as $\widehat{Z}_a(q)$, with the index $a$ encoding the choice of a $\spinc$-structure as input.
This series has two remarkable properties: it recovers the Witten-Reshetikhin-Turaev (WRT) invariants via certain appropriate limits \cite{murakami2024proof} and is known in some cases to be a quantum modular form \cite{lawrence1999modular, liles2023infinite}. 

The series $\widehat{Z}_a(q)$ is expected to be an instantiation of a 3D topological quantum field theory yet to be determined in general. This expectation has been supported in Gukov-Manolescu \cite{gukov2021two}, where 
an analogous series $\widehat{Z}_a(q,z)$ for knot complements has been introduced and shown to satisfy a gluing formula. Moreover, the series $\widehat{Z}_a(q)$ has been extended to include the datum of an arbitrary root lattice $Q$ in Park \cite{park2020higher}, 
a generalization that is motivated by the relationship between root systems and quantum groups and their role as inputs to the construction of WRT invariants.

In \cite{MT1}, we showed that the series $\widehat{Z}_a(q)$ decomposes as an average 
\[
\widehat{Z}_a(q) = \frac{1}{|\Xi|}\sum_{\xi\in \Xi} \mathsf{Y}_\tau\left(q\right) \qquad \mbox{with $\tau=(Q, a, \xi)$}
\]
with each series $\mathsf{Y}_\tau\left(q\right)$ invariant under the Neumann moves amongst plumbing trees. 
However, while $\widehat{Z}_a(q)$ is also invariant under the action of the Weyl group $W$ of $Q$, the summands $\mathsf{Y}_\tau\left(q\right)$ might not be so individually. This allows one to obtain distinct series for different $\spinc$-structures in the same orbit under the action of the Weyl group $W$.
Here $\Xi$ is an appropriate set of assignments $\xi$ of elements of the Weyl group $W$ of $Q$ to the vertices of the plumbing tree of the 3-manifold. 

For the series $\widehat{Z}_a(q)$ and $\widehat{Z}_a(q,z)$ to be a well-defined Laurent series, one requires that the framing matrix of the plumbing tree is definite --- or weakly definite, as defined in \cite{gukov2021two}. This assumption was also used in \cite{MT1} for the series $\mathsf{Y}_\tau\left(q\right)$. In the case of $Q=A_1$, this assumption was removed for the series  $\widehat{Z}_a(q)$ in Ri \cite{ri2023refined}, after introducing an additional variable $t$.

Here we show that the (weakly) definite assumption can be similarly removed for our refinements $\mathsf{Y}_\tau\left(q\right)$ and arbitrary root lattices by introducing a variable $t$. Thus we only require that the plumbing tree be reduced (as in \S\ref{subsec:reduced}) --- an assumption also needed in \cite{ri2023refined, MT1} and for all other results on $\widehat{Z}_a(q)$. Importantly, every plumbing tree can become reduced after a sequence of Neumann moves. 
We still assume throughout invertibility of the framing matrix  over $\mathbb{Q}$. \footnote{This hypothesis may be further removed by \cite[Rmk 4.4]{gukov2021two}.}
Thus in the closed case, we show:

\begin{thm}
\label{thm:qtseriesinvarianceintroreduced}
For a reduced plumbing tree $\Gamma$ and a tuple $\tau=(Q, a, \xi)$:
\begin{enumerate}[(i)]
\item the series $\mathsf{Y}_\tau\left(q, t \right)$ is invariant under the five Neumann moves between reduced plumbing trees, and
\item with respect to the action of the Weyl group $W$, one has
\begin{equation*}
\mathsf{Y}_\tau\left(q, t \right)=\mathsf{Y}_{w(\tau)}\left(q, t \right), \quad \mbox{for $w\in W$}
\end{equation*}
where $w(\tau):=(Q, w(a), w(\xi))$.
\end{enumerate}
\end{thm}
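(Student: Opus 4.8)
The plan is to reduce the theorem to the corresponding statement for the $t$-free series $\mathsf{Y}_\tau(q)$ established in \cite{MT1}, by tracking how the extra variable $t$ enters the defining sum. The series $\mathsf{Y}_\tau(q,t)$ should be given by a formula essentially identical to that of $\mathsf{Y}_\tau(q)$, but where each lattice-point contribution carries an additional monomial in $t$ whose exponent records the signature defect — i.e., the number of positive-definite directions of the framing matrix $B$ that would otherwise obstruct convergence. Concretely, I would first set up the definition of $\mathsf{Y}_\tau(q,t)$ by mimicking Ri's construction in \cite{ri2023refined}: replace the weakly-definite hypothesis by a splitting $\R^{V}=\R^{V}_{+}\oplus\R^{V}_{-}$ adapted to $B$, expand the theta-like factor at each vertex as a formal power series in the direction dictated by the sign of the corresponding eigendirection, and insert $t$ to bookkeep the ``wrong-sign'' expansions so that the resulting object is a well-defined element of $\Z[[q^{1/D},t^{\pm 1}]]$ (or the appropriate completion) for any reduced $\Gamma$.

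Next, for part (i), I would verify invariance under each of the five Neumann moves (the $\pm 1$ blow-ups/downs on leaves and on edges, and the series of moves on reduced trees as catalogued in \S\ref{subsec:reduced}). For each move I would carry out the same local computation as in \cite{MT1}: the move changes $\Gamma$ by a small local modification, and the corresponding change in the summand is a geometric-series identity at the affected vertices. The only new point is to check that the $t$-exponents on both sides match — that is, the signature defect is preserved by each reduced Neumann move (blow-ups change the signature and the rank of $B$ in lockstep, leaving the number of positive eigenvalues unchanged), so the bookkeeping monomial is carried along without alteration. This is the same strategy Ri uses for $A_1$; the generalization to an arbitrary root lattice $Q$ is formal because the Weyl-group data $\xi$ and the root lattice enter only through the $q$-part of each summand, exactly as in \cite{MT1}, and are untouched by the $t$-bookkeeping.

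For part (ii), I would observe that the Weyl group $W$ acts on the whole configuration by permuting summands: the change of variables $a\mapsto w(a)$, $\xi\mapsto w(\xi)$ induces a bijection on the index set of the sum defining $\mathsf{Y}_\tau(q,t)$ that preserves each summand, since $w$ acts by isometries of $Q$ and hence fixes both the $q$-exponents (built from the $B$-weighted inner products) and the $t$-exponents (built from the signature of $B$, which is $W$-independent). Thus the sum is literally reindexed, proving $\mathsf{Y}_\tau(q,t)=\mathsf{Y}_{w(\tau)}(q,t)$.

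The main obstacle I anticipate is part (i) at the Neumann moves that are genuinely new to the reduced (rather than weakly-definite) setting — specifically, confirming that the formal expansions at a blown-up vertex along a positive-definite direction still telescope correctly when one cannot appeal to an overall definiteness of $B$ to guarantee convergence. Handling this cleanly will require choosing the expansion directions functorially with respect to the move (so that the ``$+$'' and ``$-$'' subspaces on the two trees are identified compatibly), and then the identity reduces, as before, to a finite geometric-series manipulation; I expect this to go through but it is where the care is needed.
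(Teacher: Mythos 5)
There is a genuine gap, and it stems from a misidentification of what the variable $t$ records. In the paper's definition, each summand carries the monomial $t^{\xi^{-1}(\ell)}$, where $\xi^{-1}(\ell)=\sum_v \xi_v^{-1}(\ell_v)\in Q$ is a \emph{lattice-valued} exponent depending on both the representative $\ell$ and the Weyl assignment $\xi$ (so $t=(t_1,\dots,t_r)$ is a multi-variable). Convergence without any definiteness hypothesis is then Lemma \ref{lem:Yqtring}: for a vertex of degree $\geq 3$ the Kostant coefficient forces $\xi_v^{-1}(\ell_v)$ to be a sum of negative roots, so the $t$-exponents are bounded above and each power of $t$ receives finitely many contributions. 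Your proposal instead has $t$ bookkeeping a ``signature defect'' of $B$ — a quantity independent of $\ell$ and of $\xi$ — which would not separate the infinitely many $\ell$ contributing to a single power of $q$ and hence would not produce a well-defined series for indefinite $B$. The subsequent claims built on this premise are also off: the number of positive eigenvalues $\pi(B)$ is \emph{not} preserved by the Neumann moves (e.g.\ (B$+$) increases it, which is why the prefactor $(-1)^{|\Delta^+|\pi}$ appears and why the condition \eqref{eq:forcingbridgescondition} couples $\xi$-values across contractible paths via $\Delta\pi$), and it is false that ``the Weyl-group data $\xi$ enter only through the $q$-part'': the whole point of the refined Weyl assignments is their effect on the $t$-exponent.

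Consequently the two substantive verifications in part (i) are missing from your plan. For each move one must check, beyond the $q$-identities imported from \cite{MT1}, that the explicit reindexing maps $R$, $R_w$, $R_\beta$ and the induced map $S$ on Weyl assignments satisfy $\xi_\circ^{-1}(R(\ell))=\xi^{-1}(\ell)$; for instance in move (B$-$) this reduces to $\xi_{v_1}^{-1}(\ell_1+2w(\rho))+\xi_{v_0}^{-1}(-2w(\rho))=\xi_{v_1}^{-1}(\ell_1)$, which holds only because \eqref{eq:forcingbridgescondition} forces $\xi_{v_0}=\xi_{v_1}$, and in (B$+$) and (C) the needed cancellation uses $\xi_{v_0}=\iota\xi_{v_1}$. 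For part (ii) your conclusion is right but for the wrong reason: the $t$-exponents match because $\xi^{-1}(\ell)=(w(\xi))^{-1}(w(\ell))$, not because they are built from the signature; and the reindexing of coefficients is not sign-free — each vertex contributes $(-1)^{\ell(w)\deg v}$, and these cancel only because $\sum_v\deg v=2|E(\Gamma)|$ is even.
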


When the $(q,t)$-series can be evaluated at $t=1$, the resulting series $\mathsf{Y}_\tau\left(q, 1\right)$ recovers the $q$-series from \cite{MT1}, whose average over $\xi\in \Xi$ is the series $\widehat{Z}_a(q)$ that is invariant under the action of $W$. 
In the event that $\Gamma$ is negative-definite, the series $\mathsf{Y}_\tau(q,t)$ is invariant under the two Neumann moves between arbitrary (not necessarily reduced) negative-definite plumbing trees. 
Thus, for a negative-definite plumbing tree $\Gamma$ and $Q=A_1$, we recover the series $\Zdhat_a(q,t)$ from \cite{akhmechet2023lattice} as
\begin{equation}
\label{eq:Zdoublehat}
\Zdhat_a\left(q,t^2\right) = \frac{1}{2^{|V(\Gamma)|}}\sum_{\xi\in W^{V(\Gamma)}}\mathsf{Y}_\tau\left(q, t^\xi\right) \qquad\mbox{with 
$\tau=(A_1, a, \xi)$}.
\end{equation}
See notation in \eqref{eq:txi}.

Similarly, for a knot complement obtained by a plumbing tree $\Gamma$ with a distinguished vertex $v_0$ (see plumbing pairs in \S\ref{sec:neumann-knots}), we have:

\begin{thm}
\label{thm:qtzseriesinvarianceintroreduced}
For a reduced pair $(\Gamma, v_0)$ and a tuple $\tau=(Q, a, \xi)$:
\begin{enumerate}[(i)]
\item the series $\mathsf{Y}_\tau\left(q, t, z\right)$ is invariant under the five Neumann moves between reduced plumbing pairs, and
\item with respect to the action of the Weyl group $W$, one has
\begin{equation*}
\mathsf{Y}_\tau\left(q, t, z\right)=\mathsf{Y}_{w(\tau)}\left(q, t, z\right), \quad \mbox{for $w\in W$}
\end{equation*}
where $w(\tau):=(Q, w(a), w(\xi))$.
\end{enumerate}
\end{thm}

When the $(q,t,z)$-series can be evaluated at $t=1$, we recover the $(q,z)$-series for $Q=A_1$ from \cite{gukov2021two} as
\[
\widehat{Z}_a(q,z) = \frac{1}{2^{|V(\Gamma)|}}\sum_{\xi\in W^{V(\Gamma)}}\mathsf{Y}_\tau\left(q, 1, z\right) \qquad\mbox{with 
$\tau=(A_1, a, \xi)$}.
\]

Next, we show that the above series for closed 3-manifolds and knot complements verify a gluing formula.
Assume $M$ is obtained by gluing a pair of plumbed knot complements  
$\left(M^\pm, \partial M^\pm\right):=M\left(\Gamma^\pm, v^\pm_0\right)$ along their boundaries. Given a $\spinc$-structure $a$, select
 relative $\mathrm{Spin}^c$-structures $a^\pm$ on $\left(M^\pm, \partial M^\pm\right)$ which glue to $a$; see \eqref{eq:spinciso}. 
Starting from $\xi$, define $\xi^\pm$ to be the restriction of $\xi$ to $\Gamma^\pm$.

\begin{thm}[A gluing formula]
\label{thm:gluingintroreduced}
One has
\begin{align*}
\mathsf{Y}_\tau\left(M; q, t \right) = 
(-1)^\triangle q^\square  
\sum_{\gamma \in Q}
 \left[\mathsf{Y}^+_{\gamma}(z)  \, \mathsf{Y}^-_{\gamma}(z)  \right]_0
\end{align*}
where $\triangle$ and $\square$ are given in \eqref{eq:trianglesquare}, and
\[
	\mathsf{Y}^{\pm}_{\gamma}(z):= \mathsf{Y}_{\tau^\pm}\left(M^\pm; q, t, z\right) \qquad \mbox{with $\tau^\pm = \tau^\pm(\gamma) := \left(Q, b^\pm, \xi^\pm \right)$}
\]
for $\gamma\in Q$, and $b^\pm$ depending on $a^\pm$ and $\gamma$ as in \eqref{eq:bpm}.
\end{thm}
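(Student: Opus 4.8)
The plan is to deduce the gluing formula from a single explicit constant‑term computation, after first using the Neumann‑move invariance of Theorems~\ref{thm:qtseriesinvarianceintroreduced} and~\ref{thm:qtzseriesinvarianceintroreduced} to put the glued object in a convenient form. Concretely, I would fix a plumbing presentation of $\Gamma$ adapted to the gluing: one in which the pieces $\Gamma^\pm$ sit inside $\Gamma$ interacting only through the distinguished vertices $v^\pm_0$, so that the linking matrix of $\Gamma$ has the block shape $B=\begin{pmatrix}B^+&C\\ C^\top&B^-\end{pmatrix}$ with $C$ of rank one, a single $\varepsilon=\pm1$ coupling the coordinates dual to $v^+_0$ and $v^-_0$. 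Any two such presentations differ by the five reduced Neumann moves, under which both sides of the claimed identity are invariant, so it suffices to verify the formula for this model.

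Next I would write $\mathsf{Y}_\tau(M;q,t)$ in its definitional form: a normalization prefactor times the constant term of a product over the vertices of $\Gamma$ of local vertex factors — each depending only on the degree of the vertex, its framing, and the value of $\xi$ there, obtained by expanding powers of the Weyl denominator via Kostant partition functions — paired against a theta‑type series carrying the quadratic form attached to $B^{-1}$ and the shift $a$. The core step is to invert the block matrix $B$: the rank‑one coupling lets one apply a Schur‑complement identity so that $(\ell,B^{-1}\ell)$ splits into a part supported on $\Gamma^+$, a part supported on $\Gamma^-$, and a cross term involving only the components of $\ell$ dual to $v^+_0$ and $v^-_0$. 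Correspondingly the theta series factors into a $\Gamma^+$‑piece and a $\Gamma^-$‑piece coupled solely through that boundary lattice variable. Introducing the auxiliary summation over $\gamma\in Q$ — a Fourier/Poisson‑type insertion, equivalently a reindexing of the boundary lattice variable — decouples the cross term; this is exactly where the relative $\spinc$‑structures $a^\pm$ that glue to $a$ via \eqref{eq:spinciso} enter, together with the $\gamma$‑dependent shifts $b^\pm$ of \eqref{eq:bpm}.

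Once the cross term is decoupled, I would reassemble: the vertex factors of $\Gamma^+$ together with the retained boundary variable $z$ recombine into $\mathsf{Y}_{\tau^+(\gamma)}(M^+;q,t,z)$, and symmetrically for $\Gamma^-$, while the constant‑term extraction over all vertices of $\Gamma$ collapses to the constant term in the shared variable $z$, i.e.\ to $\bigl[\mathsf{Y}^+_\gamma(z)\,\mathsf{Y}^-_\gamma(z)\bigr]_0$. It then remains to reconcile the normalizations: the signature change $\sigma(B)-\sigma(B^+)-\sigma(B^-)$, the $\delta$‑vector contributions localized at $v^\pm_0$, the framing of the connecting edge, and the coset bookkeeping combine to yield precisely $(-1)^\triangle q^\square$ with $\triangle,\square$ as in \eqref{eq:trianglesquare}. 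The $t$‑grading is additive under the splitting of the quadratic form, with the cross‑term contribution absorbed into the $\gamma$‑sum, and the assignment $\xi$ restricts to $\xi^\pm$ since every vertex factor is local, so the identity is consistent with the $W$‑equivariance of Theorem~\ref{thm:qtzseriesinvarianceintroreduced}(ii).

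I expect the main obstacle to be twofold. First, all of these manipulations — factoring the theta series, inserting the sum over $\gamma\in Q$, and commuting it past the constant‑term extraction and the infinite vertex sums — must be justified in the $t$‑deformed ring of formal series in which $\mathsf{Y}_\tau$ is defined; this is precisely where the hypothesis that $\Gamma$ is only reduced, rather than weakly definite, is essential, with the variable $t$ supplying the regularization that makes the rearrangements legitimate. Second, pinning down the exact prefactor and, above all, the precise dependence of $b^\pm$ on $a^\pm$ and $\gamma$ requires translating the block structure of $B$ carefully into the lattice language and identifying, through \eqref{eq:spinciso}, which pairs of relative $\spinc$‑structures on $M^\pm$ glue to a given $a$ on $M$; it is getting these constants correct — not the structural shape of the formula — that will demand the most care.
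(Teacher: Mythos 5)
Your high-level outline (decompose the lattice sum over representatives of $[a]$ into a $\gamma$-indexed family of products, match $q$- and $t$-exponents and coefficients, identify the $\gamma$-sum with the $H_1(\partial M^+;Q)$-orbit) is the right shape, but the mechanism you propose rests on a wrong model of the glued framing matrix, and that error propagates through the whole argument. The gluing in \S\ref{sec:gluing} \emph{identifies} $v_0^+$ and $v_0^-$ into a single vertex whose weight is the sum of the two weights, so $B=B^+\ast B^-$ is an overlapping block sum of size $s=m+n-1$: the off-diagonal blocks are zero and the two pieces share the single diagonal entry $B_{mm}=B^+_{mm}+B^-_{11}$. Your ansatz $B=\begin{pmatrix}B^+&C\\ C^\top&B^-\end{pmatrix}$ with a rank-one coupling $\varepsilon=\pm1$ describes a different operation (joining the two distinguished vertices by an edge), and in the correct model there is no ``cross term involving the components of $\ell$ dual to $v_0^+$ and $v_0^-$'' to decouple --- $\Gamma$ has only one vertex $v_0$ and one coordinate there. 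Consequently the Schur-complement inversion, the theta-series factorization, and the ``Fourier/Poisson-type insertion'' of $\gamma$ are not available and not needed. In the paper's proof the sum over $\gamma\in Q$ arises by elementary bookkeeping: writing $\ell=a+2Bv$ and setting $\gamma:=v_m$, one splits $v$ into $v^\pm$ as in \eqref{eq:vpm}, obtains a bijection $\ell\leftrightarrow(\gamma,\ell^+,\ell^-)$ with $\ell^\pm=b^\pm(\gamma)+2B^\pm v^\pm$, and verifies the exact identities $a^Tv=(a^+)^Tv^++(a^-)^Tv^-$ and $v^TBv=(v^+)^TB^+v^++(v^-)^TB^-v^-$ (no cross term), from which the $q$-exponent matching and the constant $\square$ follow directly.

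A second gap: you leave vague the step where ``the constant-term extraction over all vertices collapses to the constant term in the shared variable $z$.'' In the closed series the coefficient $c_{\Gamma,\xi}(\ell)$ is already a product of extracted coefficients $[K_{\xi_v,\deg v}(z_v)]_{\ell_v}$, one per vertex; the only surviving $z$-series sits at the distinguished vertex of each knot complement. The actual content is the single local identity
\[
\left[ K_{\xi(v_0),\delta_0}(z) \right]_{\ell_0} = \left[ z^{-\ell^+_0}K_{\xi(v_0),1+\delta^+_0}(z)\, z^{-\ell^-_0}K_{\xi(v_0),1+\delta^-_0}(z) \right]_0,
\]
which follows from $K_{x,n}=(K_x)^{n-2}$ for $n\geq 3$ together with $\delta_0=\delta_0^++\delta_0^-$ and $\ell_0=\ell_0^++\ell_0^-$. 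Because everything is a term-by-term matching of monomials, no regularization issues about commuting the $\gamma$-sum past infinite sums arise; the role of $t$ here is only to make each side well defined, not to legitimize an analytic rearrangement. To repair your proof you would need to replace the block-inversion/decoupling step by the $\ast$-operation bookkeeping above and supply the Kostant-collection identity at $v_0$.
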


The operator $[\,\,]_0$ appearing in the statement assigns to a series in $z$ the constant term in $z$.

Finally, we verify how the $(q,t)$-series varies under  the Neumann splitting move in Figure \ref{fig:splitting}. 
Namely, for certain plumbing trees $\Gamma_1$ and $\Gamma_2$ obtained by splitting a plumbing tree $\Gamma_\circ$ and tuples $\tau_1$ and $\tau_2$ obtained by splitting a tuple $\tau_\circ$,
we show that the $(q,t)$-series for $\Gamma_\circ$ decomposes as a sum of products of certain restrictions $\mathsf{Y}^w_{\tau_i}\left(\Gamma_i; q,t\right)$ of the $(q,t)$-series for $\Gamma_1$ and $\Gamma_2$ times an additional $(q,t)$-series:

\begin{thm}[A splitting formula]
\label{thm:Du2intro}
One has
\[
\mathsf{Y}_{\tau_\circ}\left(\Gamma_\circ; q,t\right) 
= \sum_{w\in W}
\mathsf{Y}^w_{\tau_1}\left(\Gamma_1; q,t\right) \mathsf{Y}^w_{\tau_2}\left(\Gamma_2; q,t\right) \mathsf{R}_{w, \tau_\circ}(q,t)
\]
where $\mathsf{R}_{w, \tau_\circ}(q,t)$ is an explicit $(q,t)$-series given in Theorem \ref{thm:Du2}.
\end{thm}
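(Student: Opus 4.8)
The plan is to reduce everything to the combinatorial definition of the $(q,t)$-series as a $q$-power times a constant-term extraction of a product of vertex factors, to localize the difference between $\Gamma_\circ$ and $\Gamma_1\sqcup\Gamma_2$ to the single edge being cut by the splitting move, and then to decompose the factor attached to that edge by a completeness identity over the Weyl group $W$.

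First I would write $\mathsf{Y}_{\tau_\circ}(\Gamma_\circ;q,t)$ in its defining form: a $q$-power determined by the framing matrix $B_\circ$ of $\Gamma_\circ$ and the $\spinc$-datum $a$, multiplied by the constant term, in the vertex variables $x_v$ for $v\in V(\Gamma_\circ)$, of a product $\prod_{v} F_{v}^{\xi_v}(x_v)$ of vertex factors, where $F_v$ depends on the valence of $v$ in $\Gamma_\circ$ and on the Weyl element $\xi_v$ that fixes the chamber in which the corresponding rational function is expanded, while each edge contributes a propagator monomial coupling its two endpoint variables. This is precisely the form established, in the reduced and not-necessarily-definite setting, in the proof of Theorem~\ref{thm:qtseriesinvarianceintroreduced}, so I may take it as given.

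Next I would record precisely how the splitting move of Figure~\ref{fig:splitting} changes this data. Cutting the distinguished edge $e$ of $\Gamma_\circ$ deletes the propagator along $e$, lowers by one the valence of each of its two endpoints, and attaches to each endpoint a new leaf with prescribed framing; the tuple $\tau_\circ$ restricts to $\tau_1,\tau_2$, and $B_1,B_2$ are obtained from $B_\circ$ by the corresponding block/Schur-complement manipulation. Consequently the vertex factors appearing in $\mathsf{Y}_{\tau_1}(\Gamma_1;q,t)\,\mathsf{Y}_{\tau_2}(\Gamma_2;q,t)$ agree with those in $\mathsf{Y}_{\tau_\circ}(\Gamma_\circ;q,t)$ at every unaffected vertex and differ at the two endpoints of $e$ only by the factor contributed by one extra neighbor. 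The key algebraic input is then a single identity: the propagator along $e$ in $\Gamma_\circ$, together with the valence-corrections at its endpoints, can be rewritten as a sum over $w\in W$ of a product of a function of the $\Gamma_1$-side variables and the new $\Gamma_1$-leaf framing, a function of the $\Gamma_2$-side variables and the new $\Gamma_2$-leaf framing, and a factor $\mathsf{R}_{w,\tau_\circ}(q,t)$ depending only on $w$ and on $\tau_\circ$. The sum over $W$ appears because the relevant rational functions are built from Weyl-(anti)symmetrized theta-type expressions whose ``cut'' is a sum over Weyl-chamber representatives of the expansion; this is exactly where the $\xi$-dependence on the two sides is forced to match a common $w$.

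Granting that identity, I would substitute it into the constant-term expression, use the reduced hypothesis to interchange the sum over $W$ with the constant-term extraction and factor the latter across the now-disconnected vertex variables, each summand being a formal $q$-series once the $t$-deformation is present so that no convergence issue arises, and recognize the $\Gamma_i$-factor of each summand as the restriction $\mathsf{Y}^w_{\tau_i}(\Gamma_i;q,t)$; collecting the leftover $q$-power and framing-matrix contribution together with the $W$-factor yields $\mathsf{R}_{w,\tau_\circ}(q,t)$ in the explicit shape of Theorem~\ref{thm:Du2}. I expect the main obstacle to be exactly the edge-factor decomposition: establishing this propagator-splitting identity in full generality for an arbitrary root lattice $Q$, in particular pinning down the correct normalization of $\mathsf{R}_{w,\tau_\circ}$, while simultaneously tracking how $B_\circ^{-1}$ breaks up in terms of $B_1^{-1}$ and $B_2^{-1}$ under the move, a matrix-inversion-lemma computation, and how the restricted $\spinc$-data and $\xi$-assignments on the two pieces reassemble to $a$ and $\xi$. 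Everything else is bookkeeping organized along the tree.
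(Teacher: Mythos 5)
There is a genuine gap: the entire content of the theorem is concentrated in what you call the ``propagator-splitting identity,'' and you explicitly defer it (``Granting that identity\dots I expect the main obstacle to be exactly the edge-factor decomposition''). Moreover, the mechanism you describe for producing the sum over $W$ does not match how the series is actually built, so the deferred identity would be hard to even state correctly from your setup. In this paper the series has no edge propagators: the coefficient $c_{\Gamma,\xi}(\ell)$ in \eqref{eq:cgammaxi} is a product of per-vertex coefficients $[K_{\xi_v,\deg v}(z_v)]_{\ell_v}$, and all coupling between vertices lives in the quadratic form $\langle \ell,\ell\rangle=\ell^T B^{-1}\ell$ in the exponent of $q$. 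Also, the move $(\mathrm{D}_{e,2})$ does not ``cut an edge and attach new leaves'': it deletes two vertices, the weight-$e$ vertex $v_e$ \emph{and} the weight-$0$ leaf $v_0$ attached to it. These two vertices are precisely where the extra series $\mathsf{R}_{w,\tau_\circ}(q,t)$ comes from, and your sketch never isolates them.

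Concretely, the paper's proof runs as follows. Only $\ell^\circ$ of the form $\bigl(-2w(\rho),\,-x(2\rho+2\alpha),\,0,\,\ell^1_\sharp,\,0,\,\ell^2_\sharp\bigr)$ contribute, because the degree-$1$ leaf $v_0$ carries the Weyl-denominator factor $\sum_{w}(-1)^{\ell(w)}z^{2w(\rho)}$ (this is the sole source of the sum over $W$), the degree-$3$ vertex $v_e$ carries the twisted Kostant series $K_x(z)$ (the source of the sum over $\alpha\in Q$ with coefficients $k(\alpha)$ in $\mathsf{R}_{w,\tau_\circ}$), and the degree-$2$ vertices $v^i_*$ force the entry $0$. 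One then checks, via an explicit block computation of $B_\circ^{-1}$ in terms of $B_1^{-1}$ and $B_2^{-1}$, that
\[
\langle \ell^\circ,\ell^\circ\rangle=\langle\ell^1,\ell^1\rangle+\langle\ell^2,\ell^2\rangle-4e\langle\rho,\rho\rangle+4\langle w(\rho),x(2\rho+2\alpha)\rangle,
\]
where $\ell^i=(2w(\rho),\ell^i_\sharp)$; the off-diagonal entries $-a_{1j}$, $-b_{1j}$ of $B_\circ^{-1}$ are what force the \emph{same} $w$ to appear as the fixed leaf value in both $\mathsf{Y}^w_{\tau_1}$ and $\mathsf{Y}^w_{\tau_2}$ --- a point your proposal does not address and that cannot be seen from a ``factor the constant term across disconnected variables'' argument. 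Matching the signs (via $(-1)^{\ell(\iota w)}=(-1)^{|\Delta^+|}(-1)^{\ell(w)}$ from the Weyl denominator formula), the $t$-exponents, and the prefactors \eqref{eq:frontfactorRseries} then assembles $\mathsf{R}_{w,\tau_\circ}$. Without the identification of $v_0$ and $v_e$ as the carriers of the $W$- and $Q$-sums and without the block-inverse computation, your outline cannot be completed as stated.
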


Evidently, the $(q,t)$-series is not invariant under the splitting move. Thus we pose the question:

\begin{q}
How can one modify the $(q,t)$-series so that it becomes invariant under all Neumann moves between forests?
\end{q}

\subsection*{Structure of the paper}
After reviewing the required background in \S\ref{sec:background}, 
we define Weyl assignments $\xi$ in \S\ref{sec:Weylassignments}.
The $(q,t)$-series for closed 3-manifolds is defined in \S\ref{sec:qtseries}. Theorem \ref{thm:qtseriesinvarianceintroreduced} follows from Theorems \ref{thm:qtseries} and \ref{thm:Winvqtseries}.
The $(q,t,z)$-series for knot complements is defined in \S\ref{sec:knotinvt}. Theorem \ref{thm:qtzseriesinvarianceintroreduced} is proven there. Theorem \ref{thm:gluingintroreduced} follows from Theorem \ref{thm:gluing} and Theorem \ref{thm:Du2intro} from Theorem \ref{thm:Du2}. The case of lens spaces and Brieskorn spheres is explicitly discussed in \S\ref{sec:ex}.

%%%%%%%%%%%%%%%%%%%%%%%%%%%%
%%%%%%%%%%%%%%%%%%%%%%%%%%%%
%%%%%%%%%%%%%%%%%%%%%%%%%%%%

\section{Background}
\label{sec:background}

In this section, we review the required background on plumbed $3$-manifolds,  
root lattices, and $\mathrm{Spin}^c$-structures.

\subsection{Plumbed 3-manifolds}
\label{sec:plumbing}
The input of our invariant will be a \emph{plumbing tree} $\Gamma$ consisting of a vertex set $V(\Gamma)$, an edge set $E(\Gamma)$, and integer-valued vertex weights $m_v$ for $v\in V(\Gamma)$. 
 Following the plumbing construction as in Neumann \cite{neumann1981calculus} (see also  \cite[\S3.3]{MR4510934}),  a plumbing graph $\Gamma$ gives rise to a closed oriented $3$-manifold $M(\Gamma)$ as follows. One assigns to each vertex  an oriented disk bundle over the sphere with Euler number $m_v$, with two such bundles  plumbed together when the corresponding vertices are connected by an edge in $\Gamma$. This construction yields a $4$-manifold $X=X(\Gamma)$, the boundary of which is the \emph{plumbed 3-manifold} ${M}={M}(\Gamma)$. 

Alternatively, the plumbing construction may be realized by performing Dehn surgery along a framed link. Specifically, the \emph{framed link} $L(\Gamma)$ corresponding to a plumbing tree $\Gamma$ consists of an unknotted component with framing $m_v$ for each vertex $v$ of $\Gamma$, with two unknotted components chained together whenever the corresponding vertices in $\Gamma$ are connected by an edge.

Neumann showed that 
%any orientation preserving homeomorphism between two arbitrary plumbed 3-manifolds decomposes as a sequence of combinatorial moves between their plumbing graphs
if two plumbing graphs represent the same 3-manifold up to orientation-preserving diffeomorphism, then they are related by a finite sequence of combinatorial moves \cite{neumann1981calculus}.
The only such moves between two plumbing \textit{trees} are the five moves given in Figure \ref{fig:Neumann} and their inverses. 

\begin{figure}[t]
\hspace{-0.8cm}
\begin{subfigure}[t]{0.26\textwidth}
\[
\begin{array}{c}
\begin{tikzpicture}[baseline={([yshift=0ex]current bounding box.center)}]
      \path(0,0) ellipse (2 and 2);
      \tikzstyle{level 1}=[counterclockwise from=150,level distance=15mm,sibling angle=30]
      \node [draw,circle, fill, inner sep=1.5, label={[label distance=10]90:$m_1\pm1$}] (A1) at (180:4) {}
            child {node [label=: {}]{}}
	    child {node [label=: {}]{}}
    	    child {node [label=: {}]{}};
      \tikzstyle{level 1}=[counterclockwise from=30,level distance=15mm,sibling angle=-60]
      \node [draw,circle,fill, inner sep=1.5, label={[label distance=10]90:$m_2\pm1$}] (A2) at (0:4) {}
	   child {node [label=: {}]{}}
	   child {node [label=: {}]{}};
      \tikzstyle{level 1}=[counterclockwise from=150,level distance=15mm,sibling angle=30]
      \node [draw,circle, fill, inner sep=1.5, label={[label distance=10]90:$\pm1$}] (A0) at (0:0) {};
      \path (A1) edge []  node[midway, label={[label distance=10]-90:}]{} (A0);
       \path (A2) edge []  node[midway, label={[label distance=10]-90:}]{} (A0);
    \end{tikzpicture}
\\%[1cm]
\rotsimeq
    \\
\begin{tikzpicture}[baseline={([yshift=0ex]current bounding box.center)}]
      \path(0,0) ellipse (2 and 2.5);
      \tikzstyle{level 1}=[counterclockwise from=150,level distance=15mm,sibling angle=30]
      \node [draw,circle, fill, inner sep=1.5, label={[label distance=8]90:$m_1$}] (A1) at (180:2) {}
            child {node [label=: {}]{}}
	    child {node [label=: {}]{}}
    	    child {node [label=: {}]{}};
      \tikzstyle{level 1}=[counterclockwise from=30,level distance=15mm,sibling angle=-60]
      \node [draw,circle,fill, inner sep=1.5, label={[label distance=8]90:$m_2$}] (A2) at (0:2) {}
	   child {node [label=: {}]{}}
	   child {node [label=: {}]{}};
      \path (A1) edge []  node[midway, label={[label distance=10]-90:}]{} (A2);
    \end{tikzpicture}
\end{array}
\]
\caption*{(A$\pm$)}
\end{subfigure}
\qquad\quad
\begin{subfigure}[t]{0.26\textwidth}
\[
\begin{array}{c}
\begin{tikzpicture}[baseline={([yshift=0ex]current bounding box.center)}]
      \path(0,0) ellipse (2 and 2);
      \tikzstyle{level 1}=[counterclockwise from=150,level distance=15mm,sibling angle=30]
      \node [draw,circle, fill, inner sep=1.5, label={[label distance=10]90:$m_1\pm1$}] (A1) at (180:4) {}
            child {node [label=: {}]{}}
	    child {node [label=: {}]{}}
    	    child {node [label=: {}]{}};
      \tikzstyle{level 1}=[counterclockwise from=150,level distance=15mm,sibling angle=30]
      \node [draw,circle, fill, inner sep=1.5, label={[label distance=10]90:$\pm1$}] (A0) at (0:0) {};
      \path (A1) edge []  node[midway, label={[label distance=10]-90:}]{} (A0);
    \end{tikzpicture}
\\%[1cm]
\rotsimeq
    \\
\begin{tikzpicture}[baseline={([yshift=0ex]current bounding box.center)}]
      \path(0,0) ellipse (2 and 2.5);
      \tikzstyle{level 1}=[counterclockwise from=150,level distance=15mm,sibling angle=30]
      \node [draw,circle, fill, inner sep=1.5, label={[label distance=8]90:$m_1$}] (A1) at (0:0) {}
            child {node [label=: {}]{}}
	    child {node [label=: {}]{}}
    	    child {node [label=: {}]{}};
    \end{tikzpicture}
\end{array}
\]
\caption*{(B$\pm$)}
\end{subfigure}
%\quad
\begin{subfigure}[t]{0.26\textwidth}
\[
\begin{array}{c}
\begin{tikzpicture}[baseline={([yshift=0ex]current bounding box.center)}]
      \path(0,0) ellipse (2 and 2);
      \tikzstyle{level 1}=[counterclockwise from=150,level distance=15mm,sibling angle=30]
      \node [draw,circle, fill, inner sep=1.5, label={[label distance=10]90:$m_1$}] (A1) at (180:4) {}
            child {node [label=: {}]{}}
	    child {node [label=: {}]{}}
    	    child {node [label=: {}]{}};
      \tikzstyle{level 1}=[counterclockwise from=30,level distance=15mm,sibling angle=-60]
      \node [draw,circle,fill, inner sep=1.5, label={[label distance=10]90:$m_2$}] (A2) at (0:4) {}
	   child {node [label={[label distance=5]0:}]{}}
	   child {node [label={[label distance=5]0:}]{}};
      \tikzstyle{level 1}=[counterclockwise from=150,level distance=15mm,sibling angle=30]
      \node [draw,circle, fill, inner sep=1.5, label={[label distance=10]90:$0$}] (A0) at (0:0) {};
      \path (A1) edge []  node[midway, label={[label distance=10]-90:}]{} (A0);
       \path (A2) edge []  node[midway, label={[label distance=10]-90:}]{} (A0);
    \end{tikzpicture}
\\%[1cm]
\rotsimeq
    \\
\begin{tikzpicture}[baseline={([yshift=0ex]current bounding box.center)}]
      \path(0,0) ellipse (2 and 2.5);
      \tikzstyle{level 1}=[counterclockwise from=150,level distance=15mm,sibling angle=30]
      \node [draw,circle, fill, inner sep=1.5, label={[label distance=10]90:$m_1+m_2$}] (A1) at (0:0) {}
            child {node [label=: {}]{}}
	    child {node [label=: {}]{}}
    	    child {node [label=: {}]{}};
      \tikzstyle{level 1}=[counterclockwise from=30,level distance=15mm,sibling angle=-60]
      \node [draw,circle,fill, inner sep=1.5, label={[label distance=8]90:}] (A2) at (0:0) {}
	   child {node [label={[label distance=10]0:}]{}}
	   child {node [label={[label distance=10]0:}]{}};
    \end{tikzpicture}
\end{array}
\]
\caption*{(C)}
\end{subfigure}

\caption{The five Neumann moves on plumbing trees.}
\label{fig:Neumann}
\end{figure}
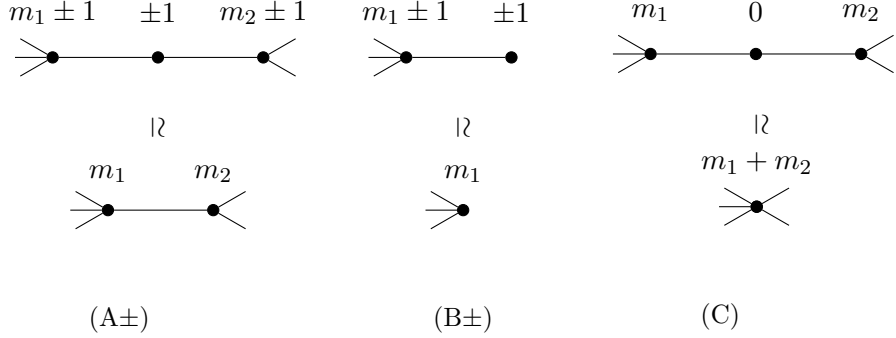

\subsection{Reduced plumbing trees}
\label{subsec:reduced}
We use reduced plumbing trees as in \cite{ri2023refined}. These are defined as follows.
For a plumbing tree $\Gamma$, a subtree of $\Gamma$ is said to be 
\textit{(Neumann) contractible} if it can be contracted down to a single vertex by a sequence of the Neumann moves from Figure \ref{fig:Neumann}. Contractibility can be characterized using the arithmetic of continued fractions \cite[Lemma 1.6]{MT1}.
A vertex $v$ of $\Gamma$ is said to be \textit{reducible} if $v$ has degree at least $3$ but, after contracting all contractible subtrees incident to $v$, the degree of $v$ drops down to $1$ or $2$.
Finally,  $\Gamma$ is said to be \textit{reduced} if $\Gamma$ has no reducible vertices. 
\textit{Any plumbing tree can be reduced} via a sequence of the Neumann moves from Figure \ref{fig:Neumann}.

A result from \cite{ri2023refined} shows that two reduced plumbing trees are related by a sequence of the Neumann moves from Figure \ref{fig:Neumann} if and only if they are related by a sequence of such moves between reduced plumbing trees (see also \cite[Prop.~1.8]{MT1} for an extended argument).
For an example of such moves, consider the case when $\Gamma$ consists of a single vertex. Then $\Gamma$ is reduced, and any of the moves (B$\pm$) or (C) yields a plumbing tree which is also reduced.

We will thus define our series starting from reduced plumbing trees and show that it is invariant under the Neumann moves between reduced plumbing trees.

\subsection{Homology of a plumbed 3-manifold}
\label{sec:hom}
For a plumbing tree $\Gamma$, select an order of its vertices $v_1, \dots, v_s$, with $s=|V(\Gamma)|$. 
Then $\Gamma$ determines a symmetric $s\times s$ matrix $B$, called the \textit{framing matrix}:
\[
B:= (B_{ij})_{i,j=1}^s \quad \mbox{ with }\quad
B_{ij}:=
\left\{
\begin{array}{ll}
m_i & \mbox{if $i=j$,}\\[5pt]
1 & \mbox{if $i\neq j$ and $(i, j)\in E(\Gamma)$,}\\[5pt]
0 & \mbox{otherwise}
\end{array}
\right.
\]
where $m_i$ is the weight of $v_i$, and $(i, j)$ denotes an edge between $v_i$ and $v_j$.
We will denote the signature of $B$ by $\sigma = \sigma(B)$ and the number of its positive eigenvalues by $\pi= \pi(B)$. 
 
The  matrix $B$ is the natural intersection pairing on $L:= H_2\left( X; \mathbb{Z} \right) \cong \mathbb{Z}^s$. Moreover,
 $B$ realizes the natural inclusion $L\hookrightarrow L'$, where $L' := H^2\left( X; \mathbb{Z} \right)\cong H_2\left( X, {M}; \mathbb{Z} \right)\cong \mathbb{Z}^s$ is the dual lattice. A standard homological argument shows
\begin{equation}
\label{eq:H1}
H_1\left( M; \mathbb{Z} \right) \cong L'/L \cong \mathbb{Z}^s/B\mathbb{Z}^s.
\end{equation}
We will assume throughout that $\det (B)\neq 0$. In particular, $B$ has maximal rank, hence $M$ is a rational homology sphere, i.e., $H_1\left( M; \mathbb{Q} \right)=0$.

\subsection{Root lattices and $\spinc$-structures}
\label{sec:knots}
For a general treatment of root lattices, we refer to  \cite{MR1890629, MR0323842}.
Let $Q$ be a root lattice of rank $r$ with root system $\Delta$.
Let $\Delta^+$ be a set of \textit{positive roots} of $\Delta$. The \textit{Weyl vector} $\rho\in \frac{1}{2}Q$ is defined to be half the sum of the positive roots. The \textit{Weyl group} $W$ acting on $Q$ is the group generated by reflections through the hyperplanes orthogonal to the roots. The \textit{length} $\ell(w)$ of an element $w\in W$ is its word length expressed as a product of reflections and is equal to the number of positive roots transformed by $w$ into negative roots.

For a plumbing tree $\Gamma$ such that $\det(B)\neq 0$, the induced bilinear pairing on the lattice $L'\otimes_{\mathbb{Z}}Q\cong Q^s$ is 
\begin{equation*}
\langle, \rangle \colon Q^s \times Q^s \rightarrow \mathbb{Q}, \qquad 
\langle a, b \rangle = \sum_{i,j=1}^s \left( B^{-1}\right)_{ij} \langle a_i, b_j\rangle
\end{equation*}
where $\langle a_i, b_j\rangle$ is the pairing in $Q$.
The space of $\spinc$-structures on $M=M(\Gamma)$ with coefficients in $Q$ is
\begin{equation}
\label{spinc-isomorphism}
\spinc_Q\left(M \right) := \frac{\delta +2Q^{s}}{2Q\langle B_1, \dots, B_{s}\rangle},
\end{equation}
where $B_i$ is the $i$-th column of $B$, and 
\[
	\delta:=\left(2-\deg(v_1), \cdots, 2-\deg(v_s) \right)\otimes 2\rho \in \mathbb{Z}^s  \otimes_{\mathbb{Z}} Q\cong Q^s.
\]
One has an affine isomorphism $\spinc_Q\left(M \right)  \cong H_1(M; Q)$ (this is clear from \eqref{eq:H1}). 
Thus, the space $H_1(M; Q)$ naturally acts on $\spinc_Q\left(M \right)$ via 
\[
[x]\cdot[a]=[a+2x] \qquad \mbox{for $x\in Q^s$ and $a\in\delta +2Q^{s}$.}
\]
Also, the Weyl group $W$ acts component-wise on $Q^s$, and this induces an action of $W$ on $\spinc_Q\left(M \right)$.

While \eqref{spinc-isomorphism} uses the choice of a plumbing tree $\Gamma$ for $M$, the resulting set and the  action of $W$ on it are invariant under the five Neumann moves in Figure \ref{fig:Neumann} (see \cite[Prop.~1.2]{MT1} for an explicit proof).

\subsection{Plumbed knot complements}
\label{sec:neumann-knots}
We will also be interested in plumbed 3-manifolds with boundary homeomorphic to a torus, i.e., the complement of a knot in a plumbed 3-manifold. We refer to \cite[\S5]{gukov2021two} for a general reference on the topics reviewed here and in the next two subsections. 

The plumbing presentation for such a 3-manifold $(M, \partial M)$ consists of a pair $(\Gamma, v_0)$ where $\Gamma$ is a plumbing tree and $v_0$ is a distinguished vertex of $
\Gamma$. We refer to such a $(\Gamma, v_0)$ as a \textit{plumbing pair} or simply a pair. The component corresponding to $v_0$ in the framed link $L(\Gamma\setminus v_0)$ represents a knot $K$ in $M(\Gamma\setminus v_0)$. Then $M=M(\Gamma, v_0)$ is defined as the complement of a tubular neighborhood of $K$ in $M(\Gamma\setminus v_0)$. It follows that $M$ is a 3-manifold with a torus boundary $\partial M$.

 Moreover, the plumbing presentation specifies a parametrization of $\partial M$. Indeed, the  presentation specifies the meridian $\mu$ of the knot and a longitude $\lambda$ given by the framing of  $K$ determined by the weight of $v _0$ in $\Gamma$ (this is the \emph{graph longitude} from \cite{gukov2021two}). One orients $\lambda$ counterclockwise, while the orientation of $\mu$ is uniquely determined from the boundary orientation of $\partial M$ induced from the orientation of $M$.

As with closed plumbed 3-manifolds, if two pairs $(\Gamma, v_0)$ and $(\Gamma', v'_0)$ represent the same 3-manifold with 
parametrized boundary $(M, \partial M)$ up to orientation-pre\-serving diffeomorphism, then $(\Gamma, v_0)$ and $(\Gamma', v'_0)$ are related by a finite sequence of combinatorial moves from \cite{neumann1981calculus}.
The only such moves between two plumbing \textit{trees} with a distinguished vertex are the five moves given in Figure \ref{fig:Neumann} and their inverses. The distinguished vertex can be involved in one of such moves, as long as it is not one of the vertices weighted by $\pm 1$ or $0$ in the top plumbing trees in Figure \ref{fig:Neumann}. We refer to such moves as \textit{Neumann moves on (plumbing) pairs}.

\subsection{Reduced plumbing pairs}
\label{sec:redplpairs}
Modifying \S\ref{subsec:reduced}, 
given a plumbing pair $(\Gamma, v_0)$, a subtree of $\Gamma$ is said to be 
\textit{relatively contractible} if it can be contracted down to a single vertex by a sequence of the Neumann moves \textit{on pairs}. 
In particular, $v_0$ will not be removed by such a contraction.
A vertex $v$ of $\Gamma$ is said to be \textit{relatively reducible} if $v$ has degree at least $3$ but, after contracting all relatively contractible subtrees incident to $v$, the degree of $v$ drops down to $1$ or $2$.
Finally, $(\Gamma, v_0)$ is said to be \textit{reduced} if $\Gamma$ has no relatively reducible vertices.

Any pair $(\Gamma, v_0)$ can be reduced via a sequence of Neumann moves on pairs.
Furthermore, two reduced plumbing pairs are related by a sequence of the Neumann moves on pairs if and only if they are related by a sequence of such moves between reduced plumbing pairs. Indeed, this follows by replacing contractibility with relative contractibility in the proof of \cite[Prop.~1.8]{MT1}).

\subsection{Relative $\spinc$-structures}
\label{sec:relspinc}
For a pair $(\Gamma, v_0)$, select an order $v_0, \dots, v_s$ of the vertices of $\Gamma$, with $s+1=|V(\Gamma)|$, and let $B$ be the framing matrix of $\Gamma$. The space of relative $\spinc$-structures for $(M, \partial M)$ is
\begin{equation}
\label{spinc-isomorphism-relative}
\spinc_Q\left(M, \partial M \right) := \frac{\widehat{\delta} +2Q^{s+1}}{2Q\langle B_1, \dots, B_{s}\rangle},
\end{equation}
where the zero-th column of $B$ corresponding to $v_0$ is omitted in the denominator and $\widehat{\delta}:=\delta-(2\rho,0,\dots,0)$, i.e.,
\[
\widehat{\delta}:=\left(1-\deg(v_0), 2-\deg(v_1), \cdots, 2-\deg(v_s) \right)\otimes 2\rho \in \mathbb{Z}^{s+1}  \otimes_{\mathbb{Z}} Q\cong Q^{s+1}.
\]
One has an affine isomorphism $\spinc_Q\left(M, \partial M \right)  \cong H_1(M, \partial M; Q)$. 
Also, the component-wise action of the Weyl group $W$ on $Q^s$ induces an action of $W$ on $\spinc_Q\left(M, \partial M \right)$.

In \eqref{spinc-isomorphism-relative}, the shift by $\widehat{\delta}$ can be replaced with a shift by ${\delta}$ as in \eqref{spinc-isomorphism} --- this is the convention used in \cite{gukov2021two}.
However, when doing so, the action of $W$ on the resulting identification of $\spinc_Q\left(M, \partial M \right)$ is not given by $[a]\mapsto [w(a)]$ for $w\in W$ and $a\in \delta +2Q^{s+1}$, as observed for $Q=A_1$ in \cite[Rmk 2.7]{akhmechet2025knot}.

\subsection{Gluing knot complements}
\label{sec:gluing}
Consider a pair of plumbed knot complements 
\[
\left(M^+, \partial M^+\right):=M\left(\Gamma^+, v^+_0\right) \quad \mbox{and} \quad \left(M^-,  \partial M^-\right):=M\left(\Gamma^-, v_0^-\right).
\] 
As in \S \ref{sec:neumann-knots}, the plumbing presentations $\left(\Gamma^\pm, v^\pm_0\right)$ specify parametrizations of $\partial M^{\pm}$ with oriented meridians $\mu^\pm$ and longitudes $\lambda^\pm$. 
Let 
\[
	h\colon\partial M^+\rightarrow \partial M^-
\]
be the orientation-reversing homeomorphism induced by  $\lambda^+ \mapsto\lambda^-$ and $\mu^+\mapsto -\mu^-$.
Gluing $M^+$ and $M^-$ along their boundaries via $h$ yields a closed oriented 3-manifold
\begin{equation}
\label{eq:Mgluing}
M:=\left(M^+, \partial M^+\right)\cup_h \left(M^-, \partial M^-\right).
\end{equation}
This is a plumbed 3-manifold $M\cong M(\Gamma)$, where $\Gamma$ is the plumbing tree obtained by identifying the vertex $v^+_0$ of $\Gamma^+$ with the vertex $v^-_0$ of $\Gamma^-$. The weight of the resulting vertex $v_0$ is defined to be equal to the sum of the weights of the vertex $v^+_0$ in $\Gamma^+$ and the vertex $v^-_0$ in $\Gamma^-$ \cite[\S 5.1]{gukov2021two}.

To express the framing matrix of $\Gamma$ in terms of the framing matrices $B^\pm$ of $\Gamma^\pm$, 
select an order $v_1, \dots, v_m$ of the vertices of $\Gamma^+$, with $m=|V(\Gamma^+)|$, such that $v_m$ is the distinguished vertex, and an order $v_1, \dots, v_n$ of the vertices of $\Gamma^-$, with $n=|V(\Gamma^-)|$, such that $v_1$ is the distinguished vertex.
Consider the operation $\ast \colon Q^m\times Q^n\rightarrow Q^{s}$, where $s=m+n-1=|V(\Gamma)|$, defined as 
\begin{equation}
\label{ast}
a^+ \ast a^-:=\left(a_1^+, \dots a_{m-1}^+, a_{m}^+ + a_1^-, a_2^-, \dots, a_{n}^- \right).
\end{equation}
The framing matrix of $\Gamma$ is then given by the matrix 
\begin{equation}
\label{eq:Bast}
B=B^+\ast B^-
\end{equation}
defined by
\[
B_i := \left\{
\begin{array}{ll}
B^+_i \ast \bm{0}& \mbox{for $i\in \{1,\dots, m-1\}$,}\\[5pt]
B^+_m \ast  B^- _1 & \mbox{for $i=m$,}\\[5pt]
\bm{0} \ast B^-_{1+i-m} & \mbox{for $i\in \{m+1,\dots, s\}$.}
\end{array}
\right.
\]

\subsection{$\spinc$-structures under gluing}
\label{subsec:spinc-gluing}
For a root lattice $Q$, consider identifications
\begin{align}
\label{eq:idspincgluing}
\begin{split}
\spinc_Q\left(M^+, \partial M^+\right) &= \frac{\widehat{\delta}^+ +2Q^{m}}{2Q\langle B^+_1, \dots, B^+_{m-1}\rangle},\\
\spinc_Q\left(M^-, \partial M^-\right) &= \frac{\widehat{\delta}^- +2Q^{n}}{2Q\langle B^-_2, \dots, B^-_{n}\rangle},\\
\spinc_Q\left(M\right) &= \frac{\delta +2Q^{s}}{2Q\langle B_1, \dots, B_{s}\rangle}
\end{split}
\end{align}
as in \eqref{spinc-isomorphism} and \eqref{spinc-isomorphism-relative}, where $\widehat{\delta}^\pm$ and $\delta$ are defined accordingly.
The Mayer-Vietoris sequence for  the gluing \eqref{eq:Mgluing} induces  a surjective map
\begin{equation}
\label{eq:MaVi}
\spinc_Q\left(M^+, \partial M^+\right) \oplus \spinc_Q\left(M^-, \partial M^-\right) \rightarrow \spinc_Q(M)
\end{equation}
given by $[a^+] \oplus[ a^-] \mapsto [a^+ \ast a^-]$,
where the operation $\ast$ is as in \eqref{ast}. 
This map is independent of the choice of representatives (the case $Q=A_1$ is in \cite[\S 5.4]{gukov2021two}). 
Moreover, the Mayer-Vietoris sequence induces an action of $H_1(\partial M^+;Q)\cong Q\langle\lambda, \mu\rangle$ on the source of the map \eqref{eq:MaVi} given by
\begin{align}
\label{eq:lamuaction}
\begin{split}
\gamma\lambda\colon \left[a^+\right] \oplus \left[ a^-\right] &\mapsto \left[a^+ + 2\gamma B^+_m\right] \oplus \left[ a^- + 2\gamma B^-_1\right],\\
\gamma\mu\colon \left[a^+\right] \oplus \left[ a^-\right] &\mapsto \left[a^+ +(0,\dots,0,2\gamma)\right] \oplus \left[ a^- -(2\gamma,0,\dots,0)\right]
\end{split}
\end{align}
for $\gamma\in Q$. Factoring by this action, the map \eqref{eq:MaVi} induces an isomorphism
\begin{equation}
\label{eq:spinciso}
\frac{\spinc_Q\left(M^+, \partial M^+\right) \oplus \spinc_Q\left(M^-, \partial M^-\right)}{H_1(\partial M^+;Q)}
\xrightarrow{\cong} \spinc_Q(M).
\end{equation}

%%%%%%%%%%%%%%%%%%%%%%%%%%%%
%%%%%%%%%%%%%%%%%%%%%%%%%%%%
%%%%%%%%%%%%%%%%%%%%%%%%%%%%

\section{Weyl assignments}
\label{sec:Weylassignments}

Here we define Weyl assignments on reduced plumbing trees. These are used in the definition of the $(q,t)$-series  as an input for both the coefficients  of the series and the exponent of the variable $t$.

Let $Q$ be a root lattice with Weyl group $W$.
For a reduced plumbing tree $\Gamma$ with framing matrix $B$, define a \textit{Weyl assignment} to be a map
\[
\xi\colon V(\Gamma) \rightarrow W, \qquad v\mapsto \xi_v
\]
such that 
\[
\xi_v = 1_W \qquad \mbox{if $\deg v = 2$},
\]
where $1_W$ is the identity element in $W$,
and such that the values on vertices across what we call maximal contractible degree-$2$ paths are coordinated by the following condition \eqref{eq:forcingbridgescondition}.

First some notation. A path in $\Gamma$ is said to have \textit{degree $2$} if all its vertices have degree $2$ in $\Gamma$, except the two terminal vertices which can have arbitrary degree in $\Gamma$. As in \S\ref{subsec:reduced}, a path in $\Gamma$ is \textit{contractible} if it can be contracted down to a single vertex by a sequence of the Neumann moves from Figure~\ref{fig:Neumann}. This can be characterized using continued fractions \cite[Lemmata 1.6 and 3.1]{MT1}.
A contractible degree-$2$ path is \textit{maximal} if it is not a proper subpath of a contractible degree-$2$ path.

For a maximal contractible degree-$2$ path $\Gamma_{v,v'}$ with terminal vertices $v$ and $v'$ in $\Gamma$ 
such that $\deg v\neq 2$ or $\deg v'\neq 2$,
the values of $\xi$ at $v$ and $v'$ are coordinated by the following condition:
\begin{equation}
\label{eq:forcingbridgescondition}
\xi_v = \iota^{\Delta\pi(v,v')}  \xi_{v'},
\end{equation}
where $\iota$ is the element of $W$ defined by
\begin{equation}
\label{eq:iota}
\iota(\alpha)=-\alpha \qquad\mbox{for all }\alpha\in Q,
\end{equation}
and $\Delta\pi(v,v')$ is the difference in numbers of positive eigenvalues
\begin{equation}
\label{eq:deltapibridge}
\Delta\pi(v,v') := \pi(B)- \pi(\overline{B}),
\end{equation}
with $\overline{B}$ equal to the framing matrix of the plumbing tree obtained from $\Gamma$ by contracting $\Gamma_{v,v'}$.

Note that for a degree-$2$ path $\Gamma_{v,v'}$, the map $\xi$ assigns $1_W$ to all vertices of $\Gamma_{v,v'}$ different than $v$ and $v'$. Moreover, if $v$ has degree $2$ in $\Gamma$, then necessarily $\xi_v=1_W$, and thus $\xi_{v'}\in \{1_W, \iota\}$ by \eqref{eq:forcingbridgescondition}, and similarly if $v'$ has degree $2$ in $\Gamma$, then $\xi_{v}\in \{1_W, \iota\}$.

Let 
\[
\Xi:= \{\mbox{Weyl assignments $\xi$ on $\Gamma$}\}.
\]
One has $|\Xi| = |W|^n$ where 
\[
n:= |\{v\in V(\Gamma) : \deg v\neq 2\}| - |\{\mbox{max.~contractible deg-2 paths}\}|.
\]

The assumption that our plumbing trees are \emph{reduced} is crucial when comparing the sets of Weyl assignments between two  plumbing trees:

\begin{lemma}
\label{lemma:isoxi}
For two reduced plumbing trees $\Gamma$ and $\Gamma'$ related by a finite sequence of the Neumann moves from Figure \ref{fig:Neumann}, the sets of Weyl assignments on $\Gamma$ and $\Gamma'$ are isomorphic.
\end{lemma}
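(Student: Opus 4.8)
The plan is to reduce the statement to the case of a single Neumann move between reduced plumbing trees, using the result cited from \cite{ri2023refined}: if $\Gamma$ and $\Gamma'$ are reduced and related by a sequence of Neumann moves, then they are related by a sequence of Neumann moves \emph{through reduced plumbing trees}. So it suffices to exhibit, for each of the five Neumann moves of Figure~\ref{fig:Neumann} between reduced trees, a bijection between the sets of Weyl assignments that is compatible with composition; chaining these bijections along the path of reduced trees then gives the desired isomorphism $\Xi(\Gamma)\cong\Xi(\Gamma')$.

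Next I would analyze each move locally. Recall that a Weyl assignment is determined by its values on the set $V^{\neq 2}(\Gamma)$ of vertices of degree $\neq 2$, subject to the forcing condition \eqref{eq:forcingbridgescondition} along maximal contractible degree-$2$ paths, and that $|\Xi(\Gamma)| = |W|^n$ with $n$ counting degree-$\neq 2$ vertices minus maximal contractible degree-$2$ paths. For each move I would track (a) how $V^{\neq 2}$ changes, (b) how the collection of maximal contractible degree-$2$ paths changes, and (c) how the quantities $\Delta\pi(v,v')$ transform, since those govern the forcing exponents $\iota^{\Delta\pi(v,v')}$. The moves (B$\pm$) and (C) applied at a leaf, and the blow-up/blow-down moves (A$\pm$), (B$\pm$) in the interior, either leave $V^{\neq 2}$ untouched or add/remove a degree-$1$ or degree-$2$ vertex together with a contractible path containing it; in the latter case a degree-$\neq2$ vertex and a maximal contractible degree-$2$ path are simultaneously created or destroyed, so $n$ is unchanged. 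The key point is that contracting $\Gamma_{v,v'}$ to compute $\overline B$ commutes with performing a Neumann move supported inside a contractible subtree, because Neumann moves inside contractible subtrees do not alter the 3-manifold and hence (by \cite[Prop.~1.2]{MT1}, invoked via the invariance of $\pi$ under orientation-preserving diffeomorphism together with the fixed signature of a block contributed by a contractible piece) do not alter $\Delta\pi(v,v')$. Thus the forcing conditions match up, and the evident map "restrict/extend the assignment on the common degree-$\neq2$ vertices, sending any newly created forced vertex to the value dictated by \eqref{eq:forcingbridgescondition}" is a well-defined bijection.

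I expect the main obstacle to be the bookkeeping around the $\Delta\pi$ exponents when a Neumann move changes the signature of the framing matrix while simultaneously changing which subtrees are contractible. Concretely, the blow-up moves (A$+$) and (B$+$) add a $+1$-weighted vertex and increase $\pi(B)$ by $1$, while (A$-$), (B$-$) add a $-1$-weighted vertex and leave $\pi$ fixed; at the same time a new contractible path segment appears, so both $\pi(B)$ and $\pi(\overline B)$ shift, and one must check the \emph{difference} $\Delta\pi(v,v')$ is preserved — or, when a maximal contractible degree-$2$ path is split or merged by the move, that the product of the $\iota^{\Delta\pi}$ factors over the affected sub-paths is preserved. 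This is a finite check, move by move, using the additivity of $\pi$ under the relevant block-diagonal or rank-one modifications of $B$, but it is where all the care is needed. Once it is dispatched, one verifies the bijections are mutually inverse for a move and its inverse and compose correctly, and Lemma~\ref{lemma:isoxi} follows.

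\begin{proof}[Proof sketch]
By the cited result of \cite{ri2023refined}, $\Gamma$ and $\Gamma'$ are connected by a finite sequence of Neumann moves all of whose intermediate plumbing trees are reduced, so it suffices to treat a single such move. For each of the five moves we define a map on Weyl assignments by keeping the values on the common degree-$\neq2$ vertices and assigning to any degree-$\neq2$ vertex newly created by the move the value forced by \eqref{eq:forcingbridgescondition} along the new maximal contractible degree-$2$ path through it. One checks, move by move, that the combinatorial data governing \eqref{eq:forcingbridgescondition} --- the partition of the degree-$\neq2$ vertices into maximal contractible degree-$2$ paths, and the exponents $\Delta\pi(v,v')$ from \eqref{eq:deltapibridge} --- transform compatibly, using that Neumann moves inside contractible subtrees preserve the relevant contributions to $\pi(B)$. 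This makes each map a well-defined bijection, inverse to the bijection attached to the inverse move; composing along the sequence yields the asserted isomorphism.
\end{proof}
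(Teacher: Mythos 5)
Your overall strategy is the same as the paper's: invoke the result of \cite{ri2023refined} to reduce to a single Neumann move between reduced trees, and then exhibit an explicit bijection $S\colon\Xi\to\Xi_\circ$ for each of the five moves (the paper does exactly this inside the proof of Theorem \ref{thm:qtseries}). However, there is a genuine flaw in the execution: the bijection you propose --- the identity on the common degree-$\neq 2$ vertices, extended by the forced values at newly created degree-$\neq 2$ vertices --- is not well defined for the moves that increase $\pi(B)$, and the justification you give for it is false. For (A$+$) (and likewise (C)), the new $+1$-weighted (resp.\ $0$-weighted) vertex raises $\pi(B)$ by $1$, while the matrix $\overline B$ obtained by contracting the affected maximal contractible degree-$2$ path is unchanged; hence $\Delta\pi(v,v')=\pi(B)-\pi(\overline B)$ changes parity and the forcing condition \eqref{eq:forcingbridgescondition} flips from $\xi_v=\iota^{k}\xi_{v'}$ to $\xi_v=\iota^{k+1}\xi_{v'}$. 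A Weyl assignment on $\Gamma$ therefore violates the condition on $\Gamma_\circ$ if you keep its values unchanged. Your appeal to invariance of $\pi$ under orientation-preserving diffeomorphism cannot rescue this: $\pi$ is an invariant of the plumbing presentation, not of $M$ (indeed you yourself note that (A$+$) and (B$+$) increase it by $1$), so the assertion in your sketch that the exponents $\Delta\pi(v,v')$ ``transform compatibly'' with the identity map is exactly what fails.

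The fix is the one the paper uses: for (A$+$) and (C) the map $S$ is not the identity but composes with the involution $\iota$ on all degree-$\neq 2$ vertices lying on one side of the modified edge (see Steps (A$+$) and (C) in the proof of Theorem \ref{thm:qtseries}), which precisely absorbs the parity shift of $\Delta\pi$; for (B$\pm$) the new leaf is a degree-$1$ vertex whose value is forced along the new contractible path, as you say, and for (A$-$) the identity does work since $\pi$ is unchanged. With that correction your argument goes through and coincides with the paper's.
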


We will prove this statement and apply it in the proof of the next Theorem \ref{thm:qtseries}, where we exhibit an explicit isomorphism $S$ between the Weyl assignments on two reduced plumbing trees related by a Neumann move from Figure \ref{fig:Neumann}.
In particular, a Neumann move between two reduced plumbing trees does not create a reducible vertex which could increase the size of the set of Weyl assignments. 

\begin{remark}
\label{rmk:Wass}
The present definition of Weyl assignments is a refinement of the definition appearing in \cite[\S 3.2]{MT1} in the sense that 
the values at degree-$1$ and degree-$0$ vertices are possibly arbitrary here, subject to \eqref{eq:forcingbridgescondition}. 

Also, while the idea of using Weyl assignments here originates from the study of the case $Q=A_1$ in \cite{ri2023refined}, the present Weyl assignments for $Q=A_1$ differ from the analogous combinatorial feature used in \cite{ri2023refined}, where degree-1 vertices were assigned possibly a value $0$ in addition to signs $\pm 1$ corresponding to elements of the Weyl group $W\cong\{\pm1\}$ for $Q=A_1$.
\end{remark}

%%%%%%%%%%%%%%%%%%%%%%%%%%%%
%%%%%%%%%%%%%%%%%%%%%%%%%%%%
%%%%%%%%%%%%%%%%%%%%%%%%%%%%

\section{An invariant two-variable series}
\label{sec:qtseries}

Here we define a two-variable series and prove Theorem \ref{thm:qtseriesinvarianceintroreduced}.

\subsection{The Kostant collection}
\label{sec:K}
Consider the formal series
\begin{equation}
\label{eq:Wz}
K(z):= \prod_{\alpha\in \Delta^+}\left(\sum_{i\geq 0} z^{-(2i+1)\alpha} \right).
\end{equation}
Here $z^\alpha$ for a root $\alpha$ is a multi-index monomial defined as
\begin{equation}
\label{eq:zpowers}
z^\alpha :=\prod_{i=1}^r z_i^{\langle \alpha^\vee, \lambda_i\rangle}
\end{equation}
with $\alpha^\vee:=\frac{2}{\langle \alpha, \alpha\rangle}\alpha$ being the coroot of $\alpha$ and
$\lambda_1,\dots,\lambda_r$ being the fundamental weights. Hence $K(z)\in \mathbb{Z}\left\llbracket z_1^{- 1}, \dots, z_r^{- 1}\right\rrbracket$, the ring of formal series in variables $z_1^{- 1}, \dots, z_r^{- 1}$.

Expanding, one has
\[
K(z) = \sum_{\alpha\in Q} k(\alpha)\, z^{-2\rho-2\alpha}
\]
where $k(\alpha)$ is the \textit{Kostant partition function} defined as
\begin{equation}
\label{eq:Kostant}
k(\alpha):= 
\begin{array}{l}
\mbox{number of ways to represent $\alpha$} \\ 
\mbox{as a sum of positive roots.}
\end{array}
\end{equation}
A key property of the series $K(z)$ is the identity
\begin{equation}
\label{eq:P2}
\left(\sum_{w\in W} (-1)^{\ell(w)}\, z^{2w(\rho)}\right)K(z) =1.
\end{equation}
When $Q=A_1$, this follows from a direct computation, and for arbitrary $Q$ this follows from the $A_1$-case and the Weyl denominator formula
\begin{equation}
\label{eq:Weyldenomformula}
\sum_{w\in W} (-1)^{\ell(w)}\, z^{2w(\rho)} = \prod_{\alpha \in\Delta^+} \left( z^\alpha - z^{-\alpha} \right).
\end{equation}

More generally, for $x\in W$, define the \textit{Weyl twist} of $K(z)$ by $x$ as
\begin{equation}
\label{eq:Ktwist}
K_x(z) = (-1)^{\ell(x)}\sum_{\alpha\in Q} k(\alpha)\, z^{-x(2\rho+2\alpha)}.
\end{equation}
For $x\in W$, consider the following collection of series
\begin{equation}
\label{eq:Kxn}
K_{x,n}(z):=
\left\{
\begin{array}{ll}
\left(\sum\limits_{w\in W} (-1)^{\ell(w)}\, z^{2w(\rho)}\right)^2 & \mbox{if $n=0$,}\\[12pt]
\sum\limits_{w\in W} (-1)^{\ell(w)}\, z^{2w(\rho)} & \mbox{if $n=1$,}\\[12pt]
1 & \mbox{if $n=2$,}\\[12pt]
\left(K_x(z)\right)^{n-2} & \mbox{if $n\geq 3$.}
\end{array}
\right.
\end{equation}
We will refer to this as the \textit{Kostant collection}.
The series $K_{x,n}(z)$ for $n\in\{0,1,2\}$ does not depend on $x$.

\subsection{The $q$-series}
Let $\Gamma$ be a plumbing tree, and let $M:=M(\Gamma)$ be the $3$-man\-i\-fold obtained by plumbing along $\Gamma$. After a sequence of Neumann moves, one can assume that $\Gamma$ is reduced. Consider a tuple
\begin{equation}
\label{eq:tau}
\tau=\left(Q, a, \xi\right)
\end{equation}
with
\begin{enumerate}[(i)]
\item $Q$ a root lattice;
\item $a \in {\delta} +2Q^{s}$ a representative of a $\mathrm{Spin}^c$-structure
$[a] \in \mathrm{Spin}^c_Q(M)$ as in \eqref{spinc-isomorphism}, with $s=|V(\Gamma)|$;
\item $\xi \in \Xi$ a Weyl assignment on $\Gamma$ as in \S\ref{sec:Weylassignments}.
\end{enumerate}
Define the series 
\[
\mathsf{Y}_\tau\left(q\right) = \mathsf{Y}_\tau\left(M(\Gamma);q\right) 
\]
as
\[
\mathsf{Y}_\tau\left(q \right):= 
(-1)^{|\Delta^+| \,\pi}
q^{\frac{1}{2}(3\sigma-\mathrm{tr}\,B)\langle \rho, \rho \rangle} \sum_{\ell\in a + 2BQ^s}
c_{\Gamma, \xi} (\ell)
\,q^{-\frac{1}{8}\langle \ell, \ell\rangle}
\]
where 
\begin{align}
\label{eq:cgammaxi}
c_{\Gamma, \xi}(\ell) := \prod_{v\in V(\Gamma)} \left[ K_{\xi_v,\deg v}(z_v) \right]_{\ell_v} \in \mathbb{Z}.
\end{align}
The operator $[\,\,]_\alpha$ assigns to a series in $z$ the coefficient of the monomial $z^\alpha$. 
Lastly, $\ell_v\in Q$ denotes the $v$-component of $\ell\in Q^s= Q^{V(\Gamma)}$ for $v\in V(\Gamma)$.

\smallskip

The series $\mathsf{Y}_\tau\left(q\right)$ is not always well defined. In fact, if the framing matrix $B$ is negative definite (or more generally, weakly negative definite, as in \cite[Def.~4.3]{gukov2021two}), 
then the series $\mathsf{Y}_\tau\left(q\right)$ exists and one has
\[
\mathsf{Y}_\tau\left(q\right)
\in q^{\frac{1}{2}(3\sigma-\mathrm{tr}\,B)\langle \rho, \rho \rangle -\frac{1}{8} \langle a, a \rangle} \,\mathbb{Z} 
\left(\! \left(q^{\frac{1}{2}}\right)\!\right).
\]
Indeed, in those cases, the exponents of $q$ are bounded below, and there are only finitely many $\ell$ that contribute to each power of $q$. (See also \cite[Lemma 3.2]{MT1}.) One uses that $\langle \ell, \ell \rangle = \langle a, a \rangle + 4\mathbb{Z}$ for $\ell\in a+2BQ^s$ to conclude that the powers of $q$ are half-integers, up to an overall rational shift.

A similar statement holds if $B$ is positive definite (or weakly positive definite), and in this case one replaces $q$ with $q^{-1}$, that is, $\mathsf{Y}_\tau\left(q\right)$ is Laurent in $q^{-\frac{1}{2}}$, up to an overall factor given by a rational power of $q$.

However, for an arbitrary invertible framing matrix $B$, the values $\langle \ell, \ell \rangle$ in the exponent of $q$ may not be bounded above nor below, and there might be infinitely many $\ell$ that contribute to the same value $\langle \ell, \ell \rangle$.
To overcome this issue, we introduce a new variable $t$.

\subsection{The $(q,t)$-series}
For  a reduced plumbing tree $\Gamma$ with invertible framing matrix and for $\tau=\left(Q, a, \xi\right)$ as in \eqref{eq:tau}, define the series 
\[
\mathsf{Y}_\tau\left(q,t\right) = \mathsf{Y}_\tau\left(M(\Gamma);q,t\right) 
\]
as
\[
\mathsf{Y}_\tau\left(q,t \right):= 
(-1)^{|\Delta^+| \,\pi}
q^{\frac{1}{2}(3\sigma-\mathrm{tr}\,B)\langle \rho, \rho \rangle} \sum_{\ell\in a + 2BQ^s}
c_{\Gamma, \xi} (\ell)
\,t^{\xi^{-1}(\ell)}
\,q^{-\frac{1}{8}\langle \ell, \ell\rangle}
\]
with $c_{\Gamma, \xi} (\ell)$ as in \eqref{eq:cgammaxi} 
and
\begin{equation}
\label{eq:exp_t}
\xi^{-1}(\ell): = \sum_{v} \xi^{-1}_v\left(\ell_v\right) \in Q.
\end{equation}
Thus $t^{\xi^{-1}(\ell)}$ is a multi-index monomial in variables $t_1,\dots, t_r$ as in \eqref{eq:zpowers}.

In \eqref{eq:Zdoublehat}, the series $\mathsf{Y}_\tau\left(q,t^\xi \right)$ for the variable $t^\xi$ is computed as follows:
\begin{equation}
\label{eq:txi}
(t^\xi)^{\xi^{-1}(\ell)}:= t^{\sum_v \ell_v}.
\end{equation}

\begin{lemma}
\label{lem:Yqtring}
For the series $\mathsf{Y}_\tau\left(q,t \right)$, the exponents of $t$ are bounded above, and there are only finitely many $\ell$ that contribute to each power of $t$. 
Thus $\mathsf{Y}_\tau\left(q, t\right)$ exists for all reduced plumbing trees having invertible framing matrix and is an element of  the following ring:
\[
\mathsf{Y}_\tau\left(q, t\right)
\in q^{\frac{1}{2}(3\sigma-\mathrm{tr}\,B)\langle \rho, \rho \rangle -\frac{1}{8} \langle a, a \rangle} \mathbb{Z}
 \left[q^{\pm \frac{1}{2}}\right]
\left(\! \left( t_1^{-1}, \dots, t_r^{-1} \right)\!\right).
\]
\end{lemma}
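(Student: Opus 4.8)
The plan is to analyze the $t$-exponent $\xi^{-1}(\ell)$ as a function of $\ell$ ranging over the coset $a + 2BQ^s$ and show that this function is a proper (in the relevant direction) map, so that each fixed monomial in $t$ receives only finitely many contributions. First I would fix a root-lattice norm $\|\cdot\|$ on $Q$ (coming from the positive-definite form $\langle\,,\,\rangle$ on $Q$) and observe that each component $\xi_v^{-1}$ is an isometry of $Q$; hence $\|\xi^{-1}(\ell)\|$ and $\sum_v \|\ell_v\|$ control one another up to a constant depending only on $s$ (via the triangle inequality, $\|\xi^{-1}(\ell)\| \le \sum_v \|\ell_v\|$, and conversely one bounds each $\|\ell_v\|$ once one knows that only finitely many $\ell$ have $\sum_v\|\ell_v\|$ below a given bound within the coset — which is automatic since $a + 2BQ^s$ is a discrete translate of a full-rank lattice in $Q^s$). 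So it suffices to show: for each $N$, only finitely many $\ell \in a + 2BQ^s$ have the $t$-degree of $t^{\xi^{-1}(\ell)}$ in the prescribed half-space $\{$degree $\ge N$ with respect to the partial order defined by the $t_i^{-1}\}$; equivalently, writing $\xi^{-1}(\ell) = 2\rho\cdot(\text{something}) + \cdots$, that the set of $\ell$ contributing to a fixed $t$-monomial is finite.

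The key structural input is the Kostant collection. The coefficient $c_{\Gamma,\xi}(\ell) = \prod_v [K_{\xi_v,\deg v}(z_v)]_{\ell_v}$ vanishes unless, for every vertex $v$ with $\deg v \ge 3$, the component $\ell_v$ lies in $x(2\rho + 2Q^+)$ where $x = \xi_v$ and $Q^+$ denotes nonnegative integer combinations of positive roots — because $K_x(z)^{n-2}$ is supported on $z^{-x(2(n-2)\rho + 2\alpha)}$ with $\alpha \in Q^+$. Thus on the support of $c_{\Gamma,\xi}$, each such $\xi_v^{-1}(\ell_v) = -(2(\deg v - 2)\rho + 2\alpha_v)$ with $\alpha_v \in Q^+$, so $\xi_v^{-1}(\ell_v)$ lies in a translate of $-Q^+$, i.e. its $t$-degree is bounded above. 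For $\deg v \le 2$ the relevant factor is supported in finitely many monomials (the cases $n=1,2$) or, for $n=0$, again in a bounded set; so those components contribute boundedly. Summing over $v$, $\xi^{-1}(\ell) = \sum_v \xi_v^{-1}(\ell_v)$ has $t$-degree bounded above on the support of $c_{\Gamma,\xi}$, uniformly — this gives the "exponents of $t$ are bounded above" claim. For the finiteness of contributions to each fixed power of $t$: fixing $\xi^{-1}(\ell)$ pins down $\sum_v \alpha_v \in Q^+$, which has only finitely many decompositions, hence finitely many tuples $(\ell_v)_{v:\deg v\ge 3}$; the remaining components $\ell_v$ ($\deg v\le 2$) each range over a finite set on the support; but we also need $\ell \in a + 2BQ^s$, and since $B$ is invertible this coset meets any finite box in finitely many points — so altogether finitely many $\ell$.

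Once finiteness per $t$-power is established, the ring membership follows formally: each coefficient of a fixed monomial $t^\beta$ is a finite $\Z$-linear combination of terms $c_{\Gamma,\xi}(\ell)\, q^{-\frac18\langle\ell,\ell\rangle}$, and the overall prefactor is $(-1)^{|\Delta^+|\pi} q^{\frac12(3\sigma - \mathrm{tr}\,B)\langle\rho,\rho\rangle}$. The powers of $q$ appearing are $\frac12(3\sigma - \mathrm{tr}\,B)\langle\rho,\rho\rangle - \frac18\langle\ell,\ell\rangle$; using $\langle\ell,\ell\rangle \in \langle a,a\rangle + 4\Z$ for $\ell \in a + 2BQ^s$ (the same congruence already invoked in the excerpt for the $q$-series, which follows since $\langle a + 2B\mu, a + 2B\mu\rangle - \langle a,a\rangle = 4\langle a,\mu\rangle + 4\langle B\mu,\mu\rangle$ and $\langle a,\mu\rangle, \langle B\mu,\mu\rangle \in \Z$ because $B^{-1}$ appears paired against $B$), the $q$-exponents differ from $-\frac18\langle a,a\rangle$ by half-integers. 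Hence each $t^\beta$-coefficient lies in $q^{\frac12(3\sigma-\mathrm{tr}\,B)\langle\rho,\rho\rangle - \frac18\langle a,a\rangle}\,\Z[q^{\pm 1/2}]$, and the whole series lies in the stated ring $q^{\frac12(3\sigma-\mathrm{tr}\,B)\langle\rho,\rho\rangle - \frac18\langle a,a\rangle}\,\Z[q^{\pm 1/2}]((t_1^{-1},\dots,t_r^{-1}))$. The main obstacle I anticipate is the finiteness step: one must rule out "drift to infinity" where the $\alpha_v \in Q^+$ grow without bound in directions on which $\xi_v^{-1}$ cancels in the sum $\sum_v \xi_v^{-1}(\ell_v)$ — but this cannot happen because each $\xi_v^{-1}(\ell_v)$ individually lies in a shifted copy of $-Q^+$ (a strongly convex cone), so a bounded sum of elements of shifted strongly convex cones forces each summand bounded; making that convexity argument precise (e.g. by pairing against a strictly dominant weight, $\langle \lambda, \alpha\rangle > 0$ for all positive roots $\alpha$, so $\langle\lambda, \sum_v\alpha_v\rangle$ bounded forces each $\langle\lambda,\alpha_v\rangle$ bounded forces each $\alpha_v$ in a finite set) is the crux.
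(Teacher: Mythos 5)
Your argument is correct and follows essentially the same route as the paper's (much terser) proof: at vertices of degree at most $2$ only finitely many $\ell_v$ contribute, while at vertices of degree at least $3$ the support of the Kostant factor forces $\xi_v^{-1}(\ell_v)$ into a translate of the cone of sums of negative roots, giving the upper bound on the $t$-exponents and, by pointedness of that cone, the finiteness of contributions to each $t$-monomial; the half-integrality of the $q$-exponents via $\langle \ell,\ell\rangle \in \langle a,a\rangle + 4\mathbb{Z}$ is likewise the congruence the paper invokes. One small slip: the support of $K_{\xi_v}(z)^{\deg v-2}$ is $-\xi_v\bigl(2(\deg v-2)\rho+2Q^+\bigr)$ rather than $\xi_v(2\rho+2Q^+)$, but your displayed formula for $\xi_v^{-1}(\ell_v)$ immediately afterwards is the correct one and is what your argument actually uses.
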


\begin{proof}
We analyze the definition in \eqref{eq:Kxn} and consider the cases when 
\begin{equation}
\label{eq:Kxnneq0}
\left[ K_{x,\deg v}(z_v) \right]_{\ell_v}\neq 0.
\end{equation}
For a vertex $v$ with $\deg v\leq 2$, there are only finitely many $\ell_v$ such that \eqref{eq:Kxnneq0} holds.
Instead for a vertex $v$ with $\deg v\geq 3$, if \eqref{eq:Kxnneq0} holds,
then necessarily $x^{-1}\left(\ell_v\right)$ is a sum of negative roots.
The statement follows.
\end{proof}

It may not be possible to evaluate the series $\mathsf{Y}_\tau\left(q,t\right)$ at $t=1$ (i.e., $t_1=\dots= t_r=1$),
as this might result in infinitely many contributions to a given monomial in~$q$. However, one has from the definition:

\begin{lemma}
If the series $\mathsf{Y}_\tau\left(q,t \right)$ can be evaluated at $t=1$, then one has 
$\mathsf{Y}_\tau\left(q,1 \right)=\mathsf{Y}_\tau\left(q\right)$.
\end{lemma}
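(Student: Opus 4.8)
The plan is to unwind the two definitions of $\mathsf{Y}_\tau\left(q,t\right)$ and $\mathsf{Y}_\tau\left(q\right)$ and observe that they differ only by the factor $t^{\xi^{-1}(\ell)}$ in each summand. Concretely, both series share the overall prefactor $(-1)^{|\Delta^+|\,\pi} q^{\frac{1}{2}(3\sigma-\mathrm{tr}\,B)\langle\rho,\rho\rangle}$ and run over the same indexing set $\ell\in a+2BQ^s$ with the same coefficients $c_{\Gamma,\xi}(\ell)$ and the same $q$-powers $q^{-\frac{1}{8}\langle\ell,\ell\rangle}$. Thus, term by term, the only discrepancy between $\mathsf{Y}_\tau\left(q,t\right)$ and $\mathsf{Y}_\tau\left(q\right)$ is the presence of $t^{\xi^{-1}(\ell)}$. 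Setting $t_1=\dots=t_r=1$ sends each monomial $t^{\xi^{-1}(\ell)}$ to $1$, so each summand of $\mathsf{Y}_\tau\left(q,1\right)$ coincides with the corresponding summand of $\mathsf{Y}_\tau\left(q\right)$.

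Next I would address the one subtlety that makes this more than a tautology: the series $\mathsf{Y}_\tau\left(q\right)$ is \emph{a priori} only known to make sense when $B$ is (weakly) definite, whereas $\mathsf{Y}_\tau\left(q,t\right)$ exists for all invertible $B$ by Lemma~\ref{lem:Yqtring}. So the statement must be read under the hypothesis that the specialization $t=1$ of $\mathsf{Y}_\tau\left(q,t\right)$ is legitimate, i.e.\ that collecting the coefficient of each power of $q$ after setting $t=1$ involves only finitely many $\ell$. Under exactly that hypothesis, the reorganization ``first sum over $\ell$, then collect powers of $q$'' is valid, and the two series agree as formal $q$-series. I would state this as: if $\mathsf{Y}_\tau\left(q,t\right)|_{t=1}$ is well defined, then in particular $\mathsf{Y}_\tau\left(q\right)$ is well defined (the finiteness condition needed for $\mathsf{Y}_\tau\left(q\right)$ is literally the $t=1$ specialization condition), and the two coincide summand by summand.

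The main (and only real) obstacle is therefore bookkeeping about convergence of formal series rather than any genuine mathematical content: one must make sure that the rearrangement of the doubly-indexed sum $\sum_{\ell}(\dots)q^{-\frac18\langle\ell,\ell\rangle}$ into a series in $q$ is justified precisely by the stated hypothesis, and that no cancellation or reindexing is hidden in passing from the $(q,t)$-series to the $q$-series. Since both sides use the identical indexing set, coefficients, prefactor, and $q$-exponents, once the specialization is granted there is nothing left to check. I would write the proof as essentially one or two sentences: compare the defining formulas, note $t^{\xi^{-1}(\ell)}\big|_{t=1}=1$ for every $\ell$, and invoke the hypothesis that the resulting sum can be reorganized as a series in $q$.
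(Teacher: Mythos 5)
Your proposal is correct and matches the paper's treatment: the paper states this lemma as following immediately ``from the definition,'' which is precisely your term-by-term comparison showing the two series share the prefactor, indexing set, coefficients $c_{\Gamma,\xi}(\ell)$, and $q$-exponents, differing only by $t^{\xi^{-1}(\ell)}\big|_{t=1}=1$. Your added remark that the hypothesis of the lemma is exactly the finiteness condition needed to regroup the sum as a $q$-series is the right way to make the implicit convergence bookkeeping explicit.
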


While the series $\mathsf{Y}_\tau\left(q, t\right)$ is expressed in terms of a  plumbing presentation $\Gamma$ for $M$,  we show:

\begin{theorem}
\label{thm:qtseries}
Any two reduced plumbing trees for $M$ related by a sequence of the five Neumann moves $(A\pm), (B\pm), (C)$ yield the same series $\mathsf{Y}_\tau\left(q, t\right)$.
\end{theorem}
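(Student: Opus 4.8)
The plan is to reduce everything to a single Neumann move between two reduced plumbing trees $\Gamma$ and $\Gamma'$, since the general case follows by composing the resulting isomorphisms. By \S\ref{subsec:reduced} and the result of Ri cited there, any sequence of Neumann moves between reduced trees can be realized through intermediate reduced trees, so it suffices to check invariance for each of the five moves $(A\pm)$, $(B\pm)$, $(C)$ individually, under the hypothesis that both the source and target are reduced.

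For a fixed move, I would first pin down the combinatorial bookkeeping. A Neumann move changes $V(\Gamma)$, the framing matrix $B$, the signature $\sigma$, the number of positive eigenvalues $\pi$, and the trace of $B$ in a controlled way; I would record how $\langle\rho,\rho\rangle$-prefactor $q^{\frac12(3\sigma-\mathrm{tr}\,B)\langle\rho,\rho\rangle}$ and the sign $(-1)^{|\Delta^+|\pi}$ transform. In parallel I would invoke Lemma \ref{lemma:isoxi} to identify an explicit isomorphism $S\colon \Xi(\Gamma)\xrightarrow{\sim}\Xi(\Gamma')$ on Weyl assignments (as promised in the discussion after that lemma), so that $\tau=(Q,a,\xi)$ on $\Gamma$ corresponds to $\tau'=(Q,a',S\xi)$ on $\Gamma'$; here $a'$ is the image of $a$ under the identification of $\spinc_Q$-sets from \S\ref{sec:knots}. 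The heart of the matter is then a summand-by-summand comparison: one must show that the map $\ell\mapsto\ell'$ on the summation lattices $a+2BQ^s\to a'+2B'Q^{s'}$ induced by the move matches the coefficients $c_{\Gamma,\xi}(\ell)=c_{\Gamma',S\xi}(\ell')$, the exponents $-\tfrac18\langle\ell,\ell\rangle$ (up to the prefactor shift), and the $t$-exponents $\xi^{-1}(\ell)=(S\xi)^{-1}(\ell')$ up to a global shift absorbed into the overall $q,t$-monomial prefactor.

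The key local computation is the identity at the vertices being modified. For moves $(B\pm)$ one deletes a $\pm1$-weighted leaf attached to a vertex $v$ whose weight shifts by $\mp1$; summing $c_{\Gamma,\xi}(\ell)q^{-\frac18\langle\ell,\ell\rangle}t^{\xi^{-1}(\ell)}$ over the component $\ell_w$ at the deleted vertex $w$ (where $\deg w=1$, so $K_{\xi_w,1}(z_w)=\sum_{u\in W}(-1)^{\ell(u)}z_w^{2u(\rho)}$) must collapse via the Weyl-type identity \eqref{eq:P2}, i.e. $\big(\sum_u(-1)^{\ell(u)}z^{2u(\rho)}\big)K(z)=1$, turning the degree-$d$ factor $K_{\xi_v,d}(z_v)$ at $v$ into the degree-$(d-1)$ factor at the new vertex, matched against the change in the quadratic form coming from the inverse framing matrix. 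Moves $(A\pm)$ require the same collapse applied to a $\pm1$-weighted degree-$2$ vertex (using the $n=1$ case feeding into $n=2$, i.e. the factor $1$), and $(C)$ requires the analogous manipulation for a $0$-weighted degree-$2$ vertex, where instead one uses that $K_{x,2}=1$ and the $0$-framing makes the relevant $B^{-1}$-entries combine the two neighboring weights additively. Throughout, the Weyl-twist identity \eqref{eq:Ktwist} is what guarantees the $t$-exponents transform correctly, since $\xi^{-1}$ intertwines the twist with plain $K$.

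The main obstacle I anticipate is controlling the interaction between the $t$-exponent $\xi^{-1}(\ell)=\sum_v\xi^{-1}_v(\ell_v)$ and the Weyl-denominator cancellation: the cancellation \eqref{eq:P2} is an identity in the $z$-variables attached to a \emph{single} vertex, but the $t$-monomial couples all vertices through $\xi^{-1}$, and under a move that changes $\xi_v$ (as happens when a contractible degree-$2$ path is shortened and condition \eqref{eq:forcingbridgescondition} forces a sign $\iota$) one must verify that the twist by $\iota$, which sends $\alpha\mapsto-\alpha$, is exactly compensated by the sign $(-1)^{\ell(w)}$ absorbed in the collapsed factor and by the change in $\Delta\pi$ recorded in \eqref{eq:deltapibridge}. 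Making this compatible with the definition of the isomorphism $S$ on Weyl assignments — in particular checking that $S$ is well-defined precisely because $\Gamma$ and $\Gamma'$ are reduced, so no reducible vertex appears mid-move — is the delicate point; everything else is a finite, if tedious, verification carried out one move at a time.
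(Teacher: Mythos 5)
Your plan follows the paper's proof essentially step for step: a move-by-move verification tracking the prefactor $(-1)^{|\Delta^+|\pi}q^{\frac12(3\sigma-\mathrm{tr}\,B)\langle\rho,\rho\rangle}$, an explicit pair of maps $R$ (on representatives of $\spinc$-structures) and $S$ (on Weyl assignments), the collapse of the extra vertices' Kostant factors via \eqref{eq:P2}, and a separate check that the $t$-exponent $\xi^{-1}(\ell)$ is preserved using linearity and the $\iota$-twist forced by \eqref{eq:forcingbridgescondition} (the paper outsources the $q$-coefficient identities to its predecessor and only verifies the new $t$-bookkeeping, which is exactly the delicate point you isolate). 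The one caveat is that for $(B\pm)$ and $(C)$ the correspondence of summands is one-to-many --- a single $\ell$ on the contracted tree matches a sum over $w\in W$, resp.\ over $\beta\in\beta_0+2Q$, of representatives on the expanded tree --- rather than the single map $\ell\mapsto\ell'$ you first posit, and the $t$-exponents match exactly with no global shift; both points are consistent with your subsequent description of the collapse.
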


\subsection{Proof of invariance}
First we  prove Theorem \ref{thm:qtseriesinvarianceintroreduced}(ii) and then prove Theorem~\ref{thm:qtseries}.

\begin{remark}
\begin{enumerate}[(i)]
\item In the arguments below, we apply various identities from \cite{MT1} concerning the coefficients $c_{\Gamma, \xi}(\ell)$ from \eqref{eq:cgammaxi}. For this, we emphasize that, despite some apparent differences in the definition, these  
$c_{\Gamma,\xi}(\ell)$ are equivalent to the ones from \cite{MT1}. Indeed, the difference stems from the fact that the Weyl assignments defined here are more general than the ones used in \cite{MT1}, as their values on vertices of degree $0$ and $1$ can  possibly be arbitrary here --- see Remark \ref{rmk:Wass}. However, $K_{x,\deg v}(z)$ is independent of $x$ for $\deg v\leq 2$, as in \cite{MT1}. Hence, the coefficients $c_{\Gamma, \xi}(\ell)$ from \eqref{eq:cgammaxi} are not affected by the change of the definition of Weyl assignments.

\item The extra flexibility of the present Weyl assignments on vertices of degree $0$ and $1$ plays a role in the exponents of the variable $t$. 
\end{enumerate}
\end{remark}

\begin{theorem}
\label{thm:Winvqtseries}
For $\ell\in \delta +2Q^s$, one has
\[
c_{\Gamma, \xi} (\ell) = c_{\Gamma, w(\xi) } (w(\ell))
 \qquad \mbox{for  $w\in W$}.
\]
This implies Theorem \ref{thm:qtseriesinvarianceintroreduced}(ii).
\end{theorem}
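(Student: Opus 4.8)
The plan is to prove the pointwise identity $c_{\Gamma,\xi}(\ell) = c_{\Gamma,w(\xi)}(w(\ell))$ and then deduce part (ii) of Theorem \ref{thm:qtseriesinvarianceintroreduced} term-by-term in the defining sum for $\mathsf{Y}_\tau(q,t)$. First I would reduce the coefficient identity to a vertex-local statement: by \eqref{eq:cgammaxi}, $c_{\Gamma,\xi}(\ell)$ is a product over $v\in V(\Gamma)$ of $[K_{\xi_v,\deg v}(z_v)]_{\ell_v}$, and $w$ acts component-wise on $Q^s$ and on the Weyl assignment, $w(\xi)_v = w\,\xi_v$. So it suffices to show, for each vertex $v$ and each $n=\deg v$, that $[K_{x,n}(z)]_{\alpha} = [K_{wx,n}(z)]_{w(\alpha)}$ for all $x,w\in W$ and $\alpha\in Q$. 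The hard content is therefore the behavior of the Kostant collection \eqref{eq:Kxn} under a Weyl twist.

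The key computation is that the Weyl-twisted Kostant series transforms covariantly: for $n\geq 3$, from \eqref{eq:Ktwist} one has $K_x(z) = (-1)^{\ell(x)}\sum_{\alpha} k(\alpha)z^{-x(2\rho+2\alpha)}$, and replacing $z$ formally so as to relabel exponents by $w$, the coefficient of $z^{w(\beta)}$ in $K_{wx}(z)$ equals the coefficient of $z^{\beta}$ in $K_x(z)$ because $-wx(2\rho+2\alpha) = w(-x(2\rho+2\alpha))$ and $\ell(wx)\equiv \ell(x) \bmod 2$ is \emph{not} needed---rather $(-1)^{\ell(wx)}$ versus $(-1)^{\ell(x)}$ must be tracked. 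Here I would invoke the identity I expect the authors to have, namely that $K_x(z)$ only enters \eqref{eq:Kxn} through $(K_x(z))^{n-2}$ with $n-2\geq 1$, so the sign $(-1)^{\ell(x)(n-2)}$ appears; but since $w$ acts the same way on every factor, comparing $[K_{wx}(z)^{n-2}]_{w(\alpha)}$ with $[K_{x}(z)^{n-2}]_{\alpha}$ reduces to the single-factor statement plus the multiplicativity of "applying $w$ to all exponents." For $n\in\{0,1,2\}$ the series $K_{x,n}$ is independent of $x$, and the relevant invariance $[K_{\cdot,0}]_{w(\alpha)} = [K_{\cdot,0}]_\alpha$, $[K_{\cdot,1}]_{w(\alpha)} = [K_{\cdot,1}]_\alpha$, $[K_{\cdot,2}]_{w(\alpha)}=[K_{\cdot,2}]_\alpha$ follows from $W$-invariance of $\sum_{w'}(-1)^{\ell(w')}z^{2w'(\rho)}$ up to the sign $(-1)^{\ell(w)}$ (Weyl-skew-invariance of the Weyl denominator). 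I would check that the signs accumulated across all vertices cancel against the overall prefactor, using that $\sum_{v}(\deg v - 2)$ and the parity of $\ell(w)$ combine appropriately, or---more likely---that the identity is an exact equality of integers with no residual sign once one restricts to $\ell\in\delta+2Q^s$ as in the hypothesis, which constrains the relevant exponents to lie in cosets where the signs are forced.

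Granting the coefficient identity, part (ii) follows by a change of variables in the sum defining $\mathsf{Y}_\tau(q,t)$. Write $\tau = (Q,a,\xi)$ and $w(\tau) = (Q,w(a),w(\xi))$. The sum for $\mathsf{Y}_{w(\tau)}(q,t)$ runs over $\ell' \in w(a) + 2BQ^s$; since $B$ commutes with the component-wise $W$-action (each $B_{ij}$ is a scalar and $w$ acts on the $Q$-factor), $w(a) + 2BQ^s = w(a + 2BQ^s)$, so substituting $\ell' = w(\ell)$ gives a bijection with the index set of $\mathsf{Y}_\tau(q,t)$. Then $c_{\Gamma,w(\xi)}(w(\ell)) = c_{\Gamma,\xi}(\ell)$ by the identity just proved; the quadratic form satisfies $\langle w(\ell), w(\ell)\rangle = \langle \ell,\ell\rangle$ since $\langle\,,\rangle$ is $W$-invariant, so the $q$-exponent is unchanged; and the $t$-exponent $\xi^{-1}(\ell) = \sum_v \xi_v^{-1}(\ell_v)$ from \eqref{eq:exp_t} matches $(w(\xi))^{-1}(w(\ell)) = \sum_v (w\xi_v)^{-1}(w(\ell_v)) = \sum_v \xi_v^{-1}w^{-1}w(\ell_v) = \sum_v \xi_v^{-1}(\ell_v)$, so $t^{(w\xi)^{-1}(w(\ell))} = t^{\xi^{-1}(\ell)}$. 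Finally the prefactors $(-1)^{|\Delta^+|\pi}$ and $q^{\frac12(3\sigma - \mathrm{tr}\,B)\langle\rho,\rho\rangle}$ depend only on $\Gamma$, not on the $\spinc$-representative, so they agree. Summing term-by-term yields $\mathsf{Y}_{w(\tau)}(q,t) = \mathsf{Y}_\tau(q,t)$, which is Theorem \ref{thm:qtseriesinvarianceintroreduced}(ii).

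The main obstacle I anticipate is bookkeeping the signs $(-1)^{\ell(x)}$ in the Weyl twist \eqref{eq:Ktwist} through the power $(K_x(z))^{n-2}$ and checking they are consistent with the covariance $[\,\cdot\,]_{w(\alpha)}$ versus $[\,\cdot\,]_\alpha$; in particular one must verify that the twist by $w$ does not introduce an extra $(-1)^{\ell(w)}$ that fails to cancel. The restriction $\ell\in\delta+2Q^s$ in the hypothesis is presumably exactly what is needed to pin down these signs (it forces $\ell_v$ into a fixed coset of $2Q$ at each vertex, hence the relevant Kostant-function arguments into a definite parity class), so I would lean on that hypothesis rather than trying to prove an unrestricted version.
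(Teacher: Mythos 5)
Your proposal follows essentially the same route as the paper: reduce to a vertex-local identity for the Kostant collection under the Weyl twist, multiply over the vertices, and then obtain part (ii) by the substitution $\ell\mapsto w(\ell)$ together with the $W$-invariance of $\langle\,,\rangle$ and the identity $(w(\xi))^{-1}(w(\ell))=\xi^{-1}(\ell)$; that last block of your argument is exactly the paper's. The one point to tighten is the local identity itself: as you suspected, it is not $[K_{x,n}(z)]_\alpha=[K_{wx,n}(z)]_{w(\alpha)}$ but rather $[K_{x,n}(z)]_{\alpha}=(-1)^{\ell(w)n}[K_{wx,n}(z)]_{w(\alpha)}$, and the residual sign vanishes upon taking the product over vertices because $\sum_v\deg v=2|E(\Gamma)|$ is even --- this is precisely your first alternative (your quantity $\sum_v(\deg v-2)$ works equally well, being even too). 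Your preferred second alternative is a red herring: the restriction $\ell\in\delta+2Q^s$ plays no role in the sign cancellation; it is simply the natural domain of the $\mathrm{Spin}^c$ representatives. With that resolved in favor of the parity-of-degrees argument, the proof is the paper's proof.
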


\begin{proof}
By the definition \eqref{eq:Kxn}, one has
\[
\left[K_{x,n}(z)\right]_{\alpha} = (-1)^{\ell(w)n} \left[K_{wx,n}(z)\right]_{w(\alpha)}.
\]
Multiplying over all vertices and using the fact that the sum of the degree of the vertices is even,
the first part of the statement follows.

Since $\xi^{-1}(\ell) = (w(\xi))^{-1}(w(\ell))$ and $\langle \ell, \ell\rangle = \langle w(\ell), w(\ell)\rangle$,
one has
\begin{align*}
c_{\Gamma, \xi} (\ell)
\,t^{\xi^{-1}(\ell)} 
\,q^{-\frac{1}{8}\langle \ell, \ell\rangle}
= c_{\Gamma, w\xi } (w(\ell))
\,t^{(w(\xi))^{-1}(w(\ell))}
\,q^{-\frac{1}{8}\langle w(\ell), w(\ell)\rangle}
\end{align*}
 for  $w\in W$.
Hence the statement.
\end{proof}

We now proceed to prove Theorem \ref{thm:qtseries}.

\begin{proof}[Proof of Theorem \ref{thm:qtseries}]
We argue that the series is invariant under each of the five Neumann moves between two reduced plumbing trees.
For each move, let $\Gamma$ be the bottom plumbing tree with framing matrix $B$
and $\Gamma_\circ$ the top plumbing tree with framing matrix $B_\circ$. The number of vertices of $\Gamma$ and $\Gamma_\circ$ will be denoted by $s$ and $s_\circ$, and the number of positive eigenvalues of $B$ and $B_\circ$ will be denoted by $\pi$ and $\pi_\circ$.

For each move, we first observe how the factor in front of the sum in the series
\begin{equation}
\label{eq:frontfactor}
(-1)^{|\Delta^+| \,\pi}
q^{\frac{1}{2}(3\sigma-\mathrm{tr}\,B)\langle \rho, \rho \rangle}
\end{equation}
changes. 
We then define an injective map
\[
R\colon Q^s \rightarrow Q^{s_\circ}
\]
inducing an isomorphism of $\spinc$-structures on $M(\Gamma)$ and $M(\Gamma_\circ)$ and an isomorphism
\[
S\colon \Xi \rightarrow \Xi_\circ
\]
of Weyl assignments for $\Gamma$ and $\Gamma_\circ$.
Thus for a tuple $\tau=(Q,a,\xi)$ for $\Gamma$, we prove
\[
\mathsf{Y}_\tau \left( M(\Gamma); q,t\right) = \mathsf{Y}_{\tau_\circ} \left( M(\Gamma_\circ); q,t\right)
\]
where $\tau_\circ=(Q, R(a),S(\xi))$.
For this, we argue that the contribution of each representative $\ell$ of the $\spinc$-structure $[a]$ to the series for $\Gamma$ 
matches a sum of contributions to the series for $\Gamma_\circ$, and that all other contributions to the series for $\Gamma_\circ$ vanish.

\bigskip

\noindent \textit{Step (A$-$).} 
As shown in \cite[Proof of Thm 3.3]{MT1}, the factor \eqref{eq:frontfactor} in front of the sum in the series is invariant. 
The map $R$ for this move~is
\begin{equation*}
R\colon Q^s \rightarrow Q^{s+1}, \qquad
(a_1, a_2)\mapsto (a_1, 0,  a_2)
\end{equation*}
where the subtuple $a_1$ corresponds to the vertices of $\Gamma$ consisting of the vertex weighted by $m_1$ and all vertices on its left, and likewise $a_2$  corresponds to the vertex weighted by $m_2$ and all vertices on its right.
To define the map $S$, note that a Weyl assignment $\xi$ on $\Gamma$ uniquely determines a Weyl assignment $\xi_\circ$ on $\Gamma_\circ$ since the added vertex in $\Gamma_\circ$ has degree $2$ and all Weyl assignments assign $1_W$ to such a vertex.

From \cite[(4.3)]{MT1}, we have that
\begin{equation*}
c_{\Gamma, \xi}(\ell) \,
q^{-\frac{1}{8}\langle \ell, \ell\rangle} =
c_{\Gamma_\circ, \xi_\circ}(R(\ell))\,
q^{-\frac{1}{8}\langle R(\ell), R(\ell)\rangle},
\end{equation*}
while $c_{\Gamma_\circ, \xi_\circ}(\ell_\circ)=0$ when $\ell_\circ$ is not in the image of $R$, as we argue in \cite[Step (A$-$)]{MT1}.

Moreover, since $\Gamma$ and $\Gamma_\circ$ differ only at a single degree-$2$ vertex and the corresponding component of $R(\ell)$ is $0$ by definition, one has
\[
\xi^{-1}_\circ (R(\ell)) = \xi^{-1}(\ell)  +1_W (0)= \xi^{-1}(\ell),
\]
where $\xi^{-1}_\circ=S(\xi^{-1})$, hence the exponents of $t$ match as well. 

\bigskip

\noindent \textit{Step (A$+$).} 
In this case,  the power of $q$ in the factor \eqref{eq:frontfactor} in front of the sum in the series is invariant. 
The function $R$ for this move is
\begin{equation*}
R\colon Q^s \rightarrow Q^{s+1}, \qquad
(a_1, a_2)\mapsto (a_1, 0, -a_2)
\end{equation*}
with notation as in the previous move.
The map $S$ is $\xi\mapsto\xi_\circ$ where
for a vertex $v$ with $\deg v \neq 2$, one has
\begin{equation*}
\xi_\circ \colon v \mapsto \left\{
\begin{array}{ll}
\xi_v & \mbox{if $v$ is on the left of the added vertex,}\\[0.5pt]
\iota \xi_v & \mbox{if $v$ is on the right of the added vertex} 
\end{array}
\right.
\end{equation*}
where $\iota$ is as in \eqref{eq:iota}.

From \cite[(4.8)]{MT1} one has that
\begin{equation*}
(-1)^{|\Delta^+|\pi}\, c_{\Gamma, \xi}(\ell) \,
q^{-\frac{1}{8}\langle \ell, \ell\rangle} =
(-1)^{|\Delta^+|\pi_\circ} \,c_{\Gamma_\circ, \xi_\circ}(R(\ell))\,
q^{-\frac{1}{8}\langle R(\ell), R(\ell)\rangle},
\end{equation*}
while $c_{\Gamma_\circ, \xi_\circ}(\ell_\circ)=0$ when $\ell_\circ$ is not in the image of $R$.
As before  $\Gamma$ and $\Gamma_\circ$ differ only at a single degree-$2$ vertex.
Now, one has
\begin{align*}
\xi^{-1}_\circ(R(\ell)) 
&= \sum_{v\;\text{left} } \xi^{-1}_{\circ, v} \left(R(\ell)_v\right) +\sum_{v\;\text{right} } \xi^{-1}_{\circ, v} \left(R(\ell)_v \right)\\
&= \sum_{v\;\text{left} } \xi^{-1}_{v} \left(\ell_v \right)+\sum_{v\;\text{right} } (\iota\xi_{v})^{-1}\left(-\ell_v\right) \\
&=\xi^{-1}(\ell),
\end{align*}
since $(\iota\xi_v)^{-1}\left(-\ell_v\right)  =\xi^{-1}_v\left(\ell_v\right)$.
Thus the exponents of $t$ match. 

\bigskip

\noindent \textit{Step (B$-$).} 
Now the factor \eqref{eq:frontfactor} in front of the sum in the series for $\Gamma_\circ$ contains an extra factor $q^{-\frac{1}{2}\langle \rho, \rho\rangle}$.
The trees $\Gamma$ and $\Gamma_\circ$ differ only at a single leaf $v_0$ adjacent to a vertex $v_1$. 
For $w\in W$, consider the map from \cite[(4.9)]{MT1}
\begin{equation*}
R_w\colon Q^s \rightarrow Q^{s+1}, \qquad
(a_\sharp, a_1)\mapsto (a_\sharp, a_1+2w(\rho),  - 2 w(\rho)),
\end{equation*} 
where $a_1$ corresponds to the vertex $v_1$ of $\Gamma$, and $a_\sharp$ corresponds to all other vertices of $\Gamma$.
For the isomorphism between $\spinc$-structures, we use the map induced by $R:=R_{w}$ with $w=1_W$.

The condition \eqref{eq:forcingbridgescondition} implies that the map $S$ is given by $\xi\mapsto\xi_\circ$ 
where $\xi_{\circ, v_0} :=  \xi_{v_1}$. Moreover,  when $\deg (v_1) =1$ in $\Gamma$, thus $\deg(v_1)=2$ in $\Gamma_\circ$, we set
$\xi_{\circ, v_1}:=1_W$, otherwise
 $\xi_{\circ, v_1}:=\xi_{v_1}$; finally, $\xi_\circ$ and $\xi$ agree at all other vertices.

From \cite[(4.12)]{MT1}, we have
\begin{equation*}
c_{\Gamma, \xi}(\ell) \, q^{-\frac{1}{8}\langle \ell, \ell\rangle}
= 
q^{-\frac{1}{2}\langle \rho,\rho\rangle}
\sum_{w\in W}c_{\Gamma_\circ, \xi_\circ}(R_w(\ell))\,
q^{-\frac{1}{8}\langle R_w(\ell), R_w(\ell)\rangle},
\end{equation*}
while $c_{\Gamma_\circ, \xi_\circ}(\ell_\circ)=0$ when $\ell_\circ$ is not in the image of any $R_w$.
In the computation of the exponent of $t$, the vertices $v_0$ and $v_1$ contribute
\begin{equation}
\label{eq:xicircv1v0}
	\xi^{-1}_{\circ, v_1}\left(\ell_1 +2w(\rho)\right) + \xi^{-1}_{\circ, v_0}\left(-2w(\rho)\right) \\
	=\xi_{v_1}^{-1}\left(\ell_1\right).
\end{equation}
To show this equality, we must verify two cases. When $\deg (v_1) \neq 1$ in $\Gamma$, one has $\xi_{\circ, v_1}=\xi_{\circ, v_0}=\xi_{v_1}$ by   definition of $\xi_\circ$, and the identity follows by linearity. When $\deg (v_1) = 1$ in $\Gamma$, one has $\xi_{\circ, v_1}=1_W$ and $\xi_{\circ, v_0} =  \xi_{v_1}$. In this case, one has $c_{\Gamma_\circ, \xi_\circ}(R_w(\ell))=0$ unless $\ell_1 +2w(\rho)=0$ by \eqref{eq:Kxn} and \eqref{eq:cgammaxi}. The identity follows.
Hence $\xi^{-1}_\circ\left(R_w(\ell)\right) =\xi^{-1}\left(\ell\right)$ for all $w\in W$. 

\bigskip

\noindent \textit{Step (B$+$).} 
The factor \eqref{eq:frontfactor} in front of the sum in the series for $\Gamma_\circ$ has an extra factor $(-1)^{|\Delta^+|}q^{\frac{1}{2}\langle \rho, \rho\rangle}$.
As with the previous move, $\Gamma$ and $\Gamma_\circ$ differ only at a single leaf $v_0$ adjacent to a common vertex $v_1$. 
For $w\in W$, consider the map from \cite[(4.13)]{MT1} 
\begin{equation*}
R_w\colon Q^s \rightarrow Q^{s+1}, \qquad
(a_\sharp, a_1)\mapsto (a_\sharp, a_1+2w(\rho),  2 w(\rho)).
\end{equation*}
As before, $R=R_{w}$ with $w=1_W$.

By \eqref{eq:forcingbridgescondition}, the map $S$ is given by $\xi\mapsto\xi_\circ$ 
where $\xi_{\circ, v_0} :=  \iota\xi_{v_1}$. Moreover,  when $\deg (v_1) =1$ in $\Gamma$, thus $\deg(v_1)=2$ in $\Gamma_\circ$, we set
$\xi_{\circ, v_1}:=1_W$, otherwise
 $\xi_{\circ, v_1}:=\xi_{v_1}$; finally, $\xi_\circ$ and $\xi$ agree at all other vertices.

From \cite[(4.16)]{MT1}, we have
\begin{align*}
&(-1)^{|\Delta^+|\, \pi}
c_{\Gamma, \xi}(\ell) \, q^{-\frac{1}{8}\langle \ell, \ell\rangle}\\
&= 
(-1)^{|\Delta^+|\, \pi_\circ}
q^{\frac{1}{2}\langle \rho,\rho\rangle}
\sum_{w\in W}c_{\Gamma_\circ, \xi_\circ}(R_w(\ell))\,
q^{-\frac{1}{8}\langle R_w(\ell), R_w(\ell)\rangle},
\end{align*}
while $c_{\Gamma_\circ, \xi_\circ}(\ell_\circ)=0$ when $\ell_\circ$ is not in the image of any $R_w$.
Moreover,
\[
	\xi^{-1}_{\circ, v_1}\left(\ell_1 +2w(\rho)\right) + \xi^{-1}_{\circ, v_0}\left(2w(\rho)\right) 
	=\xi^{-1}_{v_1}\left(\ell_1\right).
\]
This follows similarly to \eqref{eq:xicircv1v0} using  
\[
(\iota\xi_{v_1})^{-1}\left(2w(\rho)\right) = \xi^{-1}_{v_1}\left(-2w(\rho)\right). 
\]
Hence \sloppy{$\xi^{-1}_\circ\left(R_w(\ell)\right) =\xi^{-1}\left(\ell\right)$} for all $w\in W$. 

\bigskip

\noindent \textit{Step (C).} 
The factor \eqref{eq:frontfactor}  in front of the sum in the series for $\Gamma_\circ$ has an extra factor $(-1)^{|\Delta^+|}$.
Let $v_0$ be the vertex of $\Gamma$ with weight $m_1+m_2$, and let $v_1, v'_0, v_2$ be the vertices of $\Gamma_\circ$ with weights  $m_1, 0$, and $m_2$, respectively.
For $a\in Q^s$, write $a=(a_\sharp, a_0, a_\flat)$, where the entry $a_0$ corresponds to $v_0$, the subtuple $a_\sharp$ corresponds to all vertices on the left of $v_0$, and the subtuple $a_\flat$ to all vertices on the right of $v_0$.
For $\beta\in Q$, consider the map from \cite[(1.8)]{MT1}
\[
R_\beta\colon Q^s \rightarrow Q^{s+2}, \qquad
(a_\sharp, a_0, a_\flat)\mapsto (a_\sharp, a_0+\beta,0,\beta, -a_\flat)
\]
where the entries $a_0 + \beta, 0$, and $\beta$ correspond to the vertices $v_1, v'_0, v_2$ in $\Gamma_\circ$.
For $\beta=\beta_0$ defined as in \cite[(1.9)]{MT1}, the map $R=R_\beta$ induces an isomorphism of $\spinc$-structures.

The map $S$ is defined as $\xi\mapsto\xi_\circ$ such that, 
for a vertex $v$ with $\deg v \neq 2$, one has
\begin{equation*}
\xi_\circ \colon v \mapsto \left\{
\begin{array}{ll}
\xi_v & \mbox{if $v<v_1$,}\\[0.5pt]
\iota\xi_v & \mbox{if $v>v_2$.} 
\end{array}
\right.
\end{equation*}
Here $v<v'$ if $v$ is on the left of $v'$.
Moreover, define

\[
\xi_{\circ, v_1} := \left\{
\begin{array}{ll}
\xi_{v_0} & \mbox{if $\deg(v_1)\neq 2$,}\\[5pt]
1_W & \mbox{if $\deg(v_1)= 2$,}
\end{array}
\right.
\]
and
\[
\xi_{\circ, v_2} :=  \left\{
\begin{array}{ll}
\iota\xi_{ v_0} & \mbox{if $\deg(v_2)\neq 2$,}\\[5pt]
1_W & \mbox{if $\deg(v_2)= 2$.}
\end{array}
\right.
\]
From \cite[(4.22)]{MT1}, one has
\[
(-1)^{|\Delta^+|\, \pi}
c_{\Gamma,\xi}(\ell) 
q^{-\frac{1}{8}\langle \ell, \ell\rangle}=  
(-1)^{|\Delta^+|\, \pi_\circ} \sum_{\beta\in \beta_0+2Q}c_{\Gamma_\circ,\xi_\circ}(R_\beta(\ell))
q^{-\frac{1}{8}\langle R_\beta(\ell), R_\beta(\ell)\rangle},
\]
while $c_{\Gamma_\circ, \xi_\circ}(\ell_\circ)=0$ when $\ell_\circ$ is not in the image of any $R_\beta$.
Furthermore, for each $\beta$, one has
\begin{align*}
\xi^{-1}_\circ\left(R_\beta(\ell)\right) 
&= \sum_{v<v_1} \xi^{-1}_{\circ, v} \left(\ell_v \right)
+\xi^{-1}_{\circ, {v_1}}\left(\ell_0+\beta\right)
+\xi^{-1}_{\circ, {v_2}}\left(\beta\right)
+\sum_{v>v_2} \xi^{-1}_{\circ, v} \left( -\ell_v\right) \\
&= \sum_{v<v_0} \xi^{-1}_{v} \left(\ell_v \right)
+\xi^{-1}_{v_0}\left(\ell_0+\beta\right)
+(\iota\xi_{v_0})^{-1}\left( \beta \right)
+\sum_{v>v_0} (\iota\xi_{v})^{-1} \left(-\ell_v\right) \\
&= \sum_{v<v_0} \xi^{-1}_{v} \left(\ell_v \right)+ \xi^{-1}_{v_0}\left(\ell_{v_0}\right) +\sum_{v>v_0} \xi^{-1}_{v}\left(\ell_v\right) \\
&=\xi^{-1}(\ell),
\end{align*}
since $(\iota\xi_v)^{-1}(-\ell_v)  =\xi^{-1}_v(\ell_v)$.
Thus $\xi^{-1}_\circ(R_\beta(\ell)) =\xi^{-1}(\ell)$ for all $\beta\in Q$. 

\smallskip

This concludes the proof of the statement.
\end{proof}

%%%%%%%%%%%%%%%%%%%%%%%%%%%%
%%%%%%%%%%%%%%%%%%%%%%%%%%%%
%%%%%%%%%%%%%%%%%%%%%%%%%%%%

\section{An invariant three-variable series for knot complements}
\label{sec:knotinvt}

Here we define a $(q,t,z)$-series for plumbed knot complements and prove Theorem \ref{thm:qtzseriesinvarianceintroreduced}.
We use notation as in \S\S\ref{sec:neumann-knots}--\ref{sec:relspinc}.

Consider a plumbed knot complement $(M, \partial M)$. We may assume that after a sequence of Neumann moves on pairs, $(M, \partial M)$ is constructed by plumbing along a reduced pair $(\Gamma, v_0)$ as in \S\ref{sec:redplpairs}. We assume that $\Gamma$ has invertible framing matrix.

Consider a tuple $\tau=\left(Q,a,\xi \right)$ as in \eqref{eq:tau}.
We define the series
\[
\mathsf{Y}_\tau\left(q, t, z\right) :=
\mathsf{Y}_\tau\left(M(\Gamma, v_0); q, t, z\right)
\]
as
\begin{multline}
\label{eq:Ypa}
\mathsf{Y}_\tau\left(q, t, z\right)\\
:= 
(-1)^{|\Delta^+| \,\pi}
q^{\frac{1}{2}(3\sigma-\mathrm{tr}\,B)\langle \rho, \rho \rangle} 
\sum_{\ell\in a + 2Q\langle B_1, \dots, B_s\rangle}
c_{\Gamma, \xi, v_0} (\ell) 
\, t^{\xi^{-1}(\ell)}
\,q^{-\frac{1}{8}\langle \ell, \ell\rangle}
\end{multline}
where 
\begin{equation}
\label{eq:cGammaell}
c_{\Gamma, \xi, v_0} (\ell)  
:= z^{-\ell_{v_0}}  K_{\xi(v_0),1+\deg v_0}(z) \prod_{v\neq v_0} \left[ K_{\xi_v, \deg v}(z_v) \right]_{\ell_v}.
\end{equation}
Here the notation is as in \eqref{eq:cgammaxi}. 
The coefficients $c_{\Gamma, \xi, v_0} (\ell)$ lie in a ring that depends on $\xi(v_0)$. E.g., for $\xi(v_0)=1_W$, since 
$K(z)\in \mathbb{Z}\left\llbracket z_1^{- 1}, \dots, z_r^{- 1}\right\rrbracket$, one has
\[
c_{\Gamma, \xi, v_0} (\ell) \in \mathbb{Z}\left( \!\left( z_1^{- 1}, \dots, z_r^{- 1}\right)\!\right)
\]
and thus as in Lemma \ref{lem:Yqtring}, one has
\[
\mathsf{Y}_\tau\left(q, t, z\right) \in 
q^{\frac{1}{2}(3\sigma-\mathrm{tr}\,B)\langle \rho, \rho \rangle -\frac{1}{8} \langle a, a \rangle} 
\mathbb{Z}\left( \!\left( z_1^{- 1}, \dots, z_r^{- 1}\right)\!\right)
 \left[q^{\pm \frac{1}{2}}\right]
\left(\! \left( t_1^{-1}, \dots, t_r^{-1} \right)\!\right).
\]

\begin{proof}[Proof of Theorem \ref{thm:qtzseriesinvarianceintroreduced}]
One needs to check invariance under the five Neumann moves given in Figure \ref{fig:Neumann} for which  the distinguished vertex $v_0$ is not one of the vertices weighted by $\pm 1$ or $0$ in the top plumbing trees there (see \S\ref{sec:neumann-knots}).
For this, the argument for the proof of Theorem \ref{thm:qtseries} applies after replacing the contribution 
\[
\left[ K_{\xi(v_0), \deg v_0}(z_{v_0}) \right]_{\ell_{v_0}}
\]
to the coefficients $c_{\Gamma, \xi} (\ell)$ in \eqref{eq:cgammaxi} with the contribution 
\[
z^{-\ell_{v_0}}  K_{\xi(v_0),1+\deg v_0}(z)
\]
to $c_{\Gamma, \xi, v_0} (\ell)$ in \eqref{eq:cGammaell}.
\end{proof}

%%%%%%%%%%%%%%%%%%%%%%%%%%%%
%%%%%%%%%%%%%%%%%%%%%%%%%%%%
%%%%%%%%%%%%%%%%%%%%%%%%%%%%

\section{A gluing formula}
\label{sec:gluingformula}

We consider here a closed oriented 3-manifold $M$ obtained by gluing two plumbed knot complements and show how to obtain the $(q,t)$-series of $M$ from the $(q,t,z)$-series of the knot complements via a gluing formula.
We use notation as in \S\S\ref{sec:gluing}--\ref{subsec:spinc-gluing}.

Assume $M=M(\Gamma)$ is obtained by gluing a pair of plumbed knot complements  
\[
\left(M^\pm, \partial M^\pm\right)=M\left(\Gamma^\pm, v^\pm_0\right) 
\] 
along their boundaries. 
Assume that $\Gamma$ is reduced, the pairs $\left(\Gamma^\pm, v^\pm_0\right)$ are reduced, and $\Gamma$ and $\Gamma^\pm$ have invertible framing matrices.
Consider a tuple $\tau=(Q,a,\xi)$ for $M$ as in \eqref{eq:tau}. Starting from $a$, select
representatives
\[
{a}^+\in \widehat{\delta}^+ +2Q^{m} \qquad\mbox{and}\qquad {a}^-\in \widehat{\delta}^- +2Q^{n}
\]
of relative $\mathrm{Spin}^c$-structures $[a^\pm] \in \mathrm{Spin}^c_Q\left(M^\pm, \partial M^\pm\right)$ as in \eqref{eq:idspincgluing} 
such that \mbox{$a=a^+ \ast a^-$.} 
This condition implies that the isomorphism of $\mathrm{Spin}^c$-structures in \eqref{eq:spinciso} identifies $[a]$ with the orbit of $[a^+]\oplus[a^-]$ under the action of 
\[
H_1(\partial M^+;Q)\cong Q\langle\lambda, \mu\rangle.
\] 
Starting from $\xi$, define $\xi^\pm$ to be the Weyl assignments on $\Gamma^\pm$ given by restricting $\xi$.

\begin{theorem}
\label{thm:gluing}
One has
\begin{align}
\label{eq:gluid}
\mathsf{Y}_\tau\left(M; q, t\right) = 
(-1)^\triangle q^\square  
\sum_{\gamma \in Q}
 \left[\mathsf{Y}^+_{\gamma}(z)  \, \mathsf{Y}^-_{\gamma}(z)  \right]_0
\end{align}
where
\begin{align}
\begin{split}
\label{eq:trianglesquare}
	\triangle &:= |\Delta^+| \left( \pi(B)- \pi(B^+) -\pi(B^-) \right),\\
	\square &:= \frac{3}{2}\left(\sigma(B) - \sigma(B^+) - \sigma(B^-) \right)\langle \rho, \rho \rangle\\
	&\qquad-\frac{1}{8}\langle a,a\rangle +\frac{1}{8}\langle a^+, a^+\rangle +\frac{1}{8}\langle a^-, a^-\rangle,\\
	\mathsf{Y}^{\pm}_{\gamma}(z)&:= \mathsf{Y}_{\tau^\pm}\left(M^\pm; q, t, z\right),
\end{split}
\end{align}
with tuples
\[
\tau^\pm = \tau^\pm(\gamma) := \left(Q, b^\pm, \xi^\pm \right) \qquad \mbox{for $\gamma\in Q$,}
\]
and
\begin{align}
\label{eq:bpm}
b^+ =b^+(\gamma)&:= a^+ + 2 \gamma B^+_m, &
b^-=b^-(\gamma)&:=a^- + 2 \gamma B^-_1.
\end{align}
\end{theorem}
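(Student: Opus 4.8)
### Proof strategy for Theorem \ref{thm:gluing}

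\textbf{Overall approach.} The plan is to expand both sides of \eqref{eq:gluid} as sums over lattice points and exhibit an explicit bijection between the indexing sets that matches the summands term by term. The left side is a sum over $\ell\in a+2Q\langle B_1,\dots,B_s\rangle$, while the right side, after unfolding the definition \eqref{eq:Ypa} of $\mathsf{Y}^\pm_\gamma(z)$ and the constant-term operator $[\;\;]_0$, becomes a sum over $\gamma\in Q$ and pairs $(\ell^+,\ell^-)$ with $\ell^\pm\in b^\pm(\gamma)+2Q\langle B^\pm_\bullet\rangle$ subject to the constraint $\ell^+_{v_0^+}=-\ell^-_{v_0^-}$ imposed by taking the constant term in $z$ (the distinguished-vertex factor contributes $z^{-\ell_{v_0}}K_{\xi(v_0),1+\deg v_0}(z)$ on each side, and their product has $z$-degree governed by $-\ell^+_{v_0^+}-\ell^-_{v_0^-}$ together with the expansions of the two Kostant factors). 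First I would set up precisely the map $\ell^+\ast\ell^-\mapsto\ell$ via \eqref{ast} and verify, using \eqref{eq:Bast} and the description of the columns $B_i$ in terms of $B^\pm$, that $\ell^+\ast\ell^-$ indeed ranges over $a+2Q\langle B_1,\dots,B_s\rangle$ as $\gamma$ and $(\ell^+,\ell^-)$ vary, with the redundancy exactly accounted for by the $H_1(\partial M^+;Q)$-action \eqref{eq:lamuaction} — this is where \eqref{eq:spinciso} does the bookkeeping.

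\textbf{Matching the three ingredients.} With the bijection in hand, there are three things to check. (1) \emph{The coefficients $c$.} One must show $c_{\Gamma,\xi}(\ell)$ equals the product of the two $z$-coefficient contributions from $c_{\Gamma^+,\xi^+,v_0^+}$ and $c_{\Gamma^-,\xi^-,v_0^-}$ after extracting the constant term. Since $\xi^\pm$ is the restriction of $\xi$ and the vertex sets of $\Gamma$ partition (up to identifying $v_0^+$ with $v_0^-$ into $v_0$) as $V(\Gamma^+)\sqcup V(\Gamma^-)\setminus\{v_0\}$, the product $\prod_{v}[K_{\xi_v,\deg v}(z_v)]_{\ell_v}$ splits cleanly except at $v_0$, where $\deg_\Gamma v_0 = \deg_{\Gamma^+}v_0^+ + \deg_{\Gamma^-}v_0^- $ and the identity $[K_{x,n+n'}]_{\beta} = \sum_{\beta'+\beta''=\beta}[K_{x,n}]_{\beta'}[K_{x,n'}]_{\beta''}$ (immediate from \eqref{eq:Kxn} when $n,n'\geq 1$, using $K_{x,n}=(K_x)^{n-2}\cdot(\text{stuff})$ and multiplicativity) produces exactly the constant-term-in-$z$ convolution. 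One has to treat the low-degree cases $\deg v_0^\pm\in\{0,1,2\}$ separately but they follow from the same factorization pattern in \eqref{eq:Kxn}. (2) \emph{The $q$-exponent.} Here $\langle\ell,\ell\rangle$ must be compared with $\langle\ell^+,\ell^+\rangle+\langle\ell^-,\ell^-\rangle$; since these pairings use $B^{-1}$, $(B^+)^{-1}$, $(B^-)^{-1}$, one needs a linear-algebra identity relating $B^{-1}$ to the block data of $B^\pm$ under the $\ast$-gluing \eqref{eq:Bast}. This produces the $-\tfrac18\langle a,a\rangle+\tfrac18\langle a^+,a^+\rangle+\tfrac18\langle a^-,a^-\rangle$ shift in $\square$ together with the $\tfrac32(\sigma(B)-\sigma(B^+)-\sigma(B^-))\langle\rho,\rho\rangle$ term coming from comparing the prefactors $q^{\frac12(3\sigma-\mathrm{tr}\,B)\langle\rho,\rho\rangle}$ and noting $\mathrm{tr}\,B=\mathrm{tr}\,B^++\mathrm{tr}\,B^-$. (3) \emph{The sign.} Comparing $(-1)^{|\Delta^+|\pi(B)}$ with $(-1)^{|\Delta^+|(\pi(B^+)+\pi(B^-))}$ yields $(-1)^\triangle$ directly.

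\textbf{The $t$-exponent.} A separate check is that $\xi^{-1}(\ell) = (\xi^+)^{-1}(\ell^+)+(\xi^-)^{-1}(\ell^-)$ whenever $\ell=\ell^+\ast\ell^-$ and $\ell^+_{v_0^+}=-\ell^-_{v_0^-}$. Since $\xi^\pm$ restricts $\xi$, the only subtlety is the distinguished vertex: the contributions $\xi^{-1}_{v_0^+}(\ell^+_{v_0^+})$ and $\xi^{-1}_{v_0^-}(\ell^-_{v_0^-})$ must combine to $\xi^{-1}_{v_0}(\ell_{v_0})$ where $\ell_{v_0}=\ell^+_{v_0^+}+\ell^-_{v_0^-}$; but wait — this needs $\ell^+_{v_0^+}=-\ell^-_{v_0^-}$ forced by the constant term, so $\ell_{v_0}$ is \emph{not} simply the sum unless one is careful. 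Actually here is the point: in the closed series the $v_0$-component of $\ell=\ell^+\ast\ell^-$ is $\ell^+_{v_0^+}+\ell^-_{v_0^-}$, and the constant-term constraint together with the convolution identity for the Kostant factors means that in the matched terms one is not setting $\ell^+_{v_0^+}=-\ell^-_{v_0^-}$ but rather summing over all splittings $\ell_{v_0}=\ell^+_{v_0^+}+\ell^-_{v_0^-}$ of the genuine $v_0$-component; linearity of $\xi^{-1}_{v_0}$ then gives the additivity. I expect \textbf{this reconciliation of the constant-term-in-$z$ operator with the $\ast$-gluing of the middle component — i.e.\ showing the two bookkeeping schemes agree} to be the main obstacle, since it requires simultaneously tracking how the $z$-variable of the distinguished vertex, the lattice-point sum, and the Weyl-twist indices interact; the rest is a careful but routine assembly of the identities from \cite{MT1} and the block linear algebra of \eqref{eq:Bast}.
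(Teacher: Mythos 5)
Your strategy matches the paper's proof: a bijection $\ell \leftrightarrow (\gamma,\ell^+,\ell^-)$ with $\ell=\ell^+\ast\ell^-$ and $\gamma$ the $m$-th coordinate of $v$ in $\ell=a+2Bv$, followed by a term-by-term comparison of signs, $q$-exponents, $t$-exponents, and coefficients, the last via the multiplicativity $K_{x,\delta_0}=K_{x,1+\delta_0^+}\,K_{x,1+\delta_0^-}$, which gives $\left[K_{x,\delta_0}(z)\right]_{\ell_0}=\left[z^{-\ell_0^+}K_{x,1+\delta_0^+}(z)\, z^{-\ell_0^-}K_{x,1+\delta_0^-}(z)\right]_0$. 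The obstacle you flag at the end resolves exactly as you suspect: the constant term in $z$ does not force $\ell^+_{v_0}=-\ell^-_{v_0}$ (the Kostant factors carry their own $z$-dependence), and in each matched term the middle components are fixed by the bijection with $\ell_{v_0}=\ell^+_{v_0}+\ell^-_{v_0}$, so the $t$-exponents add by linearity of $\xi^{-1}_{v_0}$. The one place your route genuinely differs is the $q$-exponent: rather than a block formula for $B^{-1}$ in terms of $(B^\pm)^{-1}$, the paper writes $\ell=a+2Bv$ and expands $\langle\ell,\ell\rangle=\langle a,a\rangle+4a^Tv+4v^TBv$, so only the additivity of $a^Tv$ and $v^TBv$ under $\ast$ is needed and no inverse matrices appear; the block-inverse computation you describe is what the paper instead deploys for the splitting move in Theorem \ref{thm:Du2}, and your version would work here too at the cost of more linear algebra.
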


\begin{remark}
\label{rmk:orbitla}
The orbit of $[a^+]\oplus[a^-]$ under the action of $Q\langle\lambda\rangle$ from \eqref{eq:lamuaction} is
\begin{align*}
&Q\langle\lambda\rangle \left([a^+]\oplus[a^-]\right)\\
&=\left\{ [b^+]\oplus[b^-] : b^+= b^+(\gamma) \mbox{ and } b^-=b^-(\gamma) \mbox{ for some }\gamma\in Q\right\}.
\end{align*}
Hence, under the isomorphism \eqref{eq:spinciso}, all $[b^+]\oplus[b^-]$ for $\gamma \in Q$ are identified with the same $\spinc$-structure $[a]$ on $M$.
Indeed, the isomorphism \eqref{eq:spinciso} maps $[b^+]\oplus[b^-]$ to the class of
\[
b^+ \ast b^- = a^+ \ast a^- + 2\gamma\left(B^+_m \ast B^-_1 \right) \qquad \mbox{for $\gamma\in Q$}. 
\]
As $B^+_{m} \ast B^-_1 = B_{m}$, one has 
\[
b^+ \ast b^- \equiv a^+ \ast a^- \mbox{ mod }2Q\langle B_1, \dots, B_{s} \rangle.
\]
Hence, all $b^+ \ast b^-$ for $\gamma \in Q$ represent the same class $[a]$ on $M$.
\end{remark}

\begin{lemma}
\begin{enumerate}[(i)]

\item The quantity $\square$ is independent of the choice of representatives of $\mathrm{Spin}^c$-structures $a^\pm\in[a^\pm]$ and $a\in [a]$ subject to the constraint $a=a^+\ast a^-$, and so is the rest of the right-hand side of \eqref{eq:gluid}.

\item Moreover, $\square$ is invariant under the action of $Q\langle\lambda\rangle$ on $[a^+]\oplus[a^-]$, and so is the rest of the right-hand side of \eqref{eq:gluid}.

\item While $\square$ is not invariant under the action of $Q\langle\mu\rangle$ on $[a^+]\oplus[a^-]$, the whole right-hand side of \eqref{eq:gluid} is.
\end{enumerate}
\end{lemma}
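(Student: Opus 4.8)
The plan is to verify the three claims by tracking how each ingredient in the right-hand side of \eqref{eq:gluid} depends on the chosen representatives and on the $H_1(\partial M^+;Q)$-action. I will organize the argument around the explicit descriptions in \S\ref{sec:gluing}--\S\ref{subsec:spinc-gluing}, in particular the operation $\ast$ from \eqref{ast}, the identity $B^+_m\ast B^-_1 = B_m$, and the actions \eqref{eq:lamuaction}.

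\emph{Part (i).} First I would note that changing the representative $a^+\in[a^+]$ amounts to replacing $a^+$ by $a^+ + 2Q\langle B^+_1,\dots,B^+_{m-1}\rangle$; similarly for $a^-$ with the columns $B^-_2,\dots,B^-_n$; and since we impose $a = a^+\ast a^-$, a compatible change of $a$ is forced to lie in $2Q\langle B_1,\dots,B_{m-1}, B_{m+1},\dots,B_s\rangle$ by the definition of $B=B^+\ast B^-$ in \eqref{eq:Bast}. I would then check that $\langle a,a\rangle - \langle a^+,a^+\rangle - \langle a^-,a^-\rangle$ is unchanged modulo $8$ under such a change: this is a direct computation using that $B^{-1}$ restricted to the relevant sublattices is compatible with the block structure of $B$, together with the fact that $\langle \ell,\ell\rangle \in \langle a,a\rangle + 4\mathbb Z$ for $\ell\in a + 2BQ^s$ (used already in \S\ref{sec:qtseries}), and the analogous fact for the knot-complement lattices. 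The terms $\sigma(B),\sigma(B^\pm),\pi(B),\pi(B^\pm)$ do not depend on representatives at all, so $\triangle$ and the prefactor are automatically fine. Finally, the sum $\sum_{\gamma\in Q}[\mathsf Y^+_\gamma(z)\,\mathsf Y^-_\gamma(z)]_0$ only depends on the orbit $\{[b^+]\oplus[b^-]\}$ and hence on $[a^\pm]$, not on the chosen representatives, because each $\mathsf Y_{\tau^\pm}$ depends on $a^\pm$ only through its $\spinc$-class together with the shift by $2\gamma B^\pm$ — i.e.\ through $[b^\pm]$ — so reindexing $\gamma$ absorbs the change of representative.

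\emph{Part (ii).} For the $Q\langle\lambda\rangle$-action I would use \eqref{eq:lamuaction}: $\gamma_0\lambda$ sends $[a^+]\oplus[a^-]$ to $[a^+ + 2\gamma_0 B^+_m]\oplus[a^- + 2\gamma_0 B^-_1]$, i.e.\ it shifts the base point of the family $\{\tau^\pm(\gamma)\}$ by $\gamma_0$. Thus the sum $\sum_{\gamma\in Q}[\mathsf Y^+_\gamma\mathsf Y^-_\gamma]_0$ is literally unchanged after the substitution $\gamma\mapsto \gamma+\gamma_0$. For $\square$, I must check that $\langle a,a\rangle - \langle a^+,a^+\rangle - \langle a^-,a^-\rangle$ is invariant mod $8$ under $a^\pm\mapsto a^\pm + 2\gamma_0 B^\pm$ while $a$ is \emph{held fixed} (this is consistent because $a = a^+\ast a^- $ and $(a^++2\gamma_0B^+_m)\ast(a^-+2\gamma_0B^-_1) = a + 2\gamma_0 B_m$, which represents the same class $[a]$ by Remark~\ref{rmk:orbitla}; so one may re-choose the representative of $a$ accordingly and then invoke part (i)). Concretely: $\langle a^\pm + 2\gamma_0 B^\pm, a^\pm + 2\gamma_0 B^\pm\rangle = \langle a^\pm,a^\pm\rangle + 4\langle \gamma_0 B^\pm, a^\pm\rangle_Q + 4\langle \gamma_0 B^\pm,\gamma_0 B^\pm\rangle$, where pairing with a column $B^\pm$ under the induced form on the dual lattice reduces to the standard $Q$-pairing of components; summing the $\pm$ contributions and comparing with the shift $a\mapsto a + 2\gamma_0 B_m$ (using $B^+_m\ast B^-_1 = B_m$) makes the cross-terms cancel mod $8$, and the remaining discrepancy is exactly reabsorbed by the compatible change of representative of $a$. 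So $\square$ is invariant under $Q\langle\lambda\rangle$.

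\emph{Part (iii).} The $Q\langle\mu\rangle$-action \eqref{eq:lamuaction} sends $[a^+]\oplus[a^-]$ to $[a^+ + (0,\dots,0,2\gamma_0)]\oplus[a^- - (2\gamma_0,0,\dots,0)]$; this shifts the distinguished components $\ell_{v^+_0}\mapsto \ell_{v^+_0}+2\gamma_0$ and $\ell_{v^-_0}\mapsto \ell_{v^-_0}-2\gamma_0$ in the two knot-complement series. In $\mathsf Y^\pm_\gamma(z)$ the distinguished vertex contributes via the factor $z^{-\ell_{v^\pm_0}}$ in \eqref{eq:cGammaell}, so the $\mu$-shift multiplies $\mathsf Y^+_\gamma(z)$ by $z^{-2\gamma_0}$ and $\mathsf Y^-_\gamma(z)$ by $z^{+2\gamma_0}$; the product $\mathsf Y^+_\gamma(z)\mathsf Y^-_\gamma(z)$ is therefore unchanged, and so is its $z$-constant-term. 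The prefactor $(-1)^\triangle q^\square$, however, is \emph{not} invariant on its own: the $\mu$-shift changes $\langle a^\pm,a^\pm\rangle$ (and, via $a = a^+\ast a^-$, also $\langle a,a\rangle$) in a way that need not be compatible with merely re-choosing representatives — this is the phenomenon already flagged in \S\ref{sec:neumann-knots} and \cite[Rmk 2.7]{akhmechet2024knot}, namely that the $\widehat\delta$ versus $\delta$ conventions interact nontrivially with the gluing. I would make this precise by computing $\langle a^+ + (0,\dots,0,2\gamma_0), \cdot\rangle$ explicitly using $(B^+)^{-1}$ and showing the change in $\square$ is some nonzero $q$-power $q^{\star(\gamma_0)}$; but then tracing through the definition \eqref{eq:Ypa} of $\mathsf Y_\tau$ on the left-hand side — where the same $\spinc$-class $[a]$ on $M$ is being represented and the lattice sum $\sum_{\ell\in a+2Q\langle B_1,\dots,B_s\rangle}$ is reindexed accordingly — one sees that the left-hand side is genuinely representative-independent (it is a $\spinc$-invariant of $M$), hence the right-hand side must be too. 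So the cleanest route is: establish the identity \eqref{eq:gluid} itself (Theorem~\ref{thm:gluing}), note the left side depends only on $[a]\in\spinc_Q(M)$, and conclude (iii) as a formal consequence, using (i) and (ii) to handle the $\lambda$-direction and representative ambiguity, and the $z^{\pm 2\gamma_0}$ cancellation above for the $\mu$-direction.

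\emph{Main obstacle.} The delicate point is part (iii): one must show that the \emph{extra} $q$-power picked up by $(-1)^\triangle q^\square$ under the $\mu$-action is exactly compensated inside the sum $\sum_\gamma[\mathsf Y^+_\gamma\mathsf Y^-_\gamma]_0$. The clean cancellation $z^{-2\gamma_0}\cdot z^{+2\gamma_0}=1$ handles the $z$-monomials, but the $q$-exponents $-\frac18\langle\ell,\ell\rangle$ in the two knot-complement sums also shift, and matching these against the shift in $\square$ requires the identity $\langle b^+,b^+\rangle + \langle b^-,b^-\rangle$ versus $\langle b^+\ast b^-, b^+\ast b^-\rangle$ under a $\mu$-translation — essentially a bilinear-algebra lemma about how the $\ast$ operation interacts with the inverse framing matrices $(B^\pm)^{-1}$ and $B^{-1}$ when one perturbs only the distinguished components. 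I expect this to be the step that needs the most care, and I would isolate it as a short computational lemma about the quadratic form before assembling the three parts.
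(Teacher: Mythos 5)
Your overall strategy matches the paper's: direct expansion of the quadratic forms using the block structure of $B=B^+\ast B^-$ for (i) and (ii), a direct computation showing $\square$ fails to be $\mu$-invariant, and deducing the invariance of the full right-hand side of \eqref{eq:gluid} under $Q\langle\mu\rangle$ from the gluing theorem itself. However, there are two concrete imprecisions that you should repair. First, in parts (i) and (ii) you repeatedly aim to show that $\langle a,a\rangle-\langle a^+,a^+\rangle-\langle a^-,a^-\rangle$ is unchanged \emph{modulo} $8$; that target is too weak for the lemma as stated, since $\square$ contains $-\tfrac18\langle a,a\rangle+\tfrac18\langle a^+,a^+\rangle+\tfrac18\langle a^-,a^-\rangle$ and a change by a nonzero multiple of $8$ would still shift $\square$ by a nonzero integer. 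What is actually true (and what the paper proves) is the \emph{exact} equality: writing $c^\pm=a^\pm+2B^\pm v^\pm$ with $v^+\in Q^{m-1}\times\{0\}$, $v^-\in\{0\}\times Q^{n-1}$, one has $a^T(v^+\ast v^-)=(a^+)^Tv^++(a^-)^Tv^-$ and $(v^+\ast v^-)^TB(v^+\ast v^-)=(v^+)^TB^+v^++(v^-)^TB^-v^-$, so the combination is literally unchanged; the analogous exact identities $a^Te_m=(a^+)^Te_m+(a^-)^Te_1$ and $e_m^TBe_m=e_m^TB^+e_m+e_1^TB^-e_1$ handle the $\lambda$-action. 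Your computation, if carried out, would give these exact cancellations, so this is a matter of stating the right goal; but note also that in (ii) you cannot literally "invoke part (i)": the $\lambda$-action changes the classes $[a^\pm]$ themselves (since $B^+_m$ and $B^-_1$ are precisely the omitted columns in \eqref{spinc-isomorphism-relative}), so a fresh computation parallel to (i) is required, not an application of it.

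Second, in part (iii) the assertion that the product $\mathsf Y^+_\gamma(z)\,\mathsf Y^-_\gamma(z)$ is "therefore unchanged" because $z^{-2\gamma_0}\cdot z^{+2\gamma_0}=1$ is not yet a proof: under the $\mu$-shift every $\ell^\pm$ in the two lattice sums moves to $\ell^\pm\pm 2\gamma_0 e$, which changes not only the $z$-monomial but also the exponents $-\tfrac18\langle\ell^\pm,\ell^\pm\rangle$ of $q$ and the exponents $(\xi^\pm)^{-1}(\ell^\pm)$ of $t$. The $t$-shifts cancel in the product because $\xi^+$ and $\xi^-$ take the same value at the identified vertex, and the $q$-shifts turn out to be constant over each class (because $v^+_m=v^-_1=0$ kills the $\ell$-dependence through $(B^\pm)^{-1}$) and must then be matched against the change in $q^\square$ — this is exactly the "bilinear-algebra lemma" you flag as the main obstacle but do not carry out. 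Your proposed escape — prove Theorem \ref{thm:gluing} first and deduce (iii) from the representative-independence of the left-hand side — is legitimate and is in fact the route the paper takes for this part, so the gap is closable; just be aware that as written the direct argument for $\mu$-invariance of the sum is incomplete.
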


\begin{proof}
For (i), consider $c^\pm \in \left[a^\pm\right]$, that is,
\begin{align*}
c^+ &= a^+ + 2 B^+ v^+ \qquad \mbox{for some $v^+\in Q^{m-1}\times \{0\}$,}\\
c^- &= a^- + 2 B^- v^- \qquad \mbox{for some $v^-\in \{0\}\times Q^{n-1}$.}
\end{align*}
Let $c:=c^+ \ast c^-$. Then  $c = a + 2B\left( v^+ \ast v^- \right)$, hence $c\in [a]$.
To verify the statement about $\square$ it is enough verify that
\[
\langle a,a\rangle -\langle a^+, a^+\rangle -\langle a^-, a^-\rangle
=\langle c,c\rangle -\langle c^+, c^+\rangle -\langle c^-, c^-\rangle.
\]
Expanding, one has
\begin{align}
\begin{split}
\label{eq:expaacc}
\langle c,c\rangle &= \langle a,a\rangle + 4 a^T \left(  v^+ \ast v^- \right) + 4 \left( v^+ \ast v^-\right)^T B \left( v^+ \ast v^-\right),\\
\langle c^\pm,c^\pm\rangle &= \langle a^\pm,a^\pm\rangle + 4 \left(a^\pm\right)^T  v^\pm + 4 \left( v^\pm\right)^T B^\pm v^\pm.
\end{split}
\end{align}
The statement follows from the identities
\begin{align*}
a^T \left(  v^+ \ast v^- \right) &= \left(a^+\right)^T  v^+ +  \left(a^-\right)^T  v^-,\\
\left( v^+ \ast v^-\right)^T B \left( v^+ \ast v^-\right) &= \left( v^+\right)^T B^+ v^+ + \left( v^-\right)^T B^- v^-
\end{align*}
which hold by linearity.

\smallskip

For (ii), consider $b^\pm$ in the orbit of $[a^+]\oplus[a^-]$ under the action of $Q\langle\lambda\rangle$, that is, $b^\pm = b^\pm (\gamma)$ for some $\gamma\in Q$ as in \eqref{eq:bpm}, see Remark \ref{rmk:orbitla}.
Let $b:=b^+\ast b^-$. Then $b=a+2\gamma B_m$.
To verify the statement about $\square$, it is enough verify that
\[
\langle a,a\rangle -\langle a^+, a^+\rangle -\langle a^-, a^-\rangle
=\langle b,b\rangle -\langle b^+, b^+\rangle -\langle b^-, b^-\rangle.
\]
Write 
\begin{align*}
b^+ &= a^+ +2\gamma B^+ e_m, &
b^- &= a^- +2\gamma B^- e_1, &
b &= a +2\gamma B e_m,
\end{align*}
where $e_i$ is the $i$-th standard basis element in the appropriate vector space.
Expanding as in \eqref{eq:expaacc}, the statement follows from the identities

\begin{align*}
a^T e_m &= \left(a^+\right)^T e_m +  \left(a^-\right)^T  e_1,\\
e_m^T B  e_m &= e_m^T B^+ e_m + e_1^T B^- e_1
\end{align*}
which hold by definition of the operation $\ast$ in \eqref{ast} and \eqref{eq:Bast}.

\smallskip

For (iii), consider $d^\pm$ in the orbit of $[a^+]\oplus[a^-]$ under the action of $Q\langle\mu\rangle$, that is,
$d^+ = a^+ + 2\delta e_m$ and $d^- = a^- -2\delta e_1$ for some $\delta\in Q$.
One has $d^+ \ast d^- = a$. A direct computation as in parts (i)-(ii) shows that 
\[
\langle a^+, a^+\rangle +\langle a^-, a^-\rangle \neq
\langle d^+, d^+\rangle +\langle d^-, d^-\rangle
\]
hence $\square$ is not invariant under the action of $Q\langle\mu\rangle$.
One can directly check the invariance of the whole right-hand side of \eqref{eq:gluid}. This will also follow from the proof of Theorem \ref{thm:gluing} below.
\end{proof}

\begin{proof}[Proof of Theorem \ref{thm:gluing}]
On each side of the identity \eqref{eq:gluid}, the series is obtained as a sum of contributions indexed by the representatives of the $\spinc$-structure. First, we give a bijection between these representatives, and then we argue that the corresponding contributions coincide.

The set of representatives of the $\spinc$-structure on the left-hand side is $a+2Q\langle B_1, \dots, B_{s} \rangle$. Select 
 such an element
\[
\ell = a+2Bv \qquad\mbox{for some } v\in Q^s.
\]
On the right-hand side, the set of representatives of the $\spinc$-structure is
indexed by $\gamma\in Q$ and representatives of the classes $[b^+]$ and $[b^-]$, that is, the set of representatives is
\[
\bigcup_{\gamma\in Q} \left( a^+ + 2 \gamma B^+_m + 2Q\langle B^+_1, \dots, B^+_{m-1} \rangle \right)\times 
\left( a^- + 2 \gamma B^-_1 + 2Q\langle B^-_2, \dots, B^-_{n} \rangle \right).
\]
We assign to $\ell$ an element of this set as follows.
Write $v=\left(v_1, \dots, v_s \right)$, and let $\gamma:=v_m$  be the $m$-th coordinate of $v$.
Define 
\begin{align}
\label{eq:vpm}
v^+ &:=\left(v_1, \dots, v_{m-1}, \gamma\right), & 
v^- &:=\left(\gamma, v_{m+1}, \dots, v_s\right).
\end{align}
One has
$v=v^+\ast v^- -\gamma e_m$ where $e_m$ is the $m$-th standard basis element, that is,
\begin{equation}
\label{eq:vvplusvminus}
v= \left( v^+_1, \dots, v^+_{m-1}, \gamma, v^-_2, \dots, v^-_n \right).
\end{equation}
Define 
\[
\ell^\pm:=a^\pm +2B^\pm v^\pm \in \left[b^\pm \right].
\]
This gives a map $\ell \mapsto (\gamma, \ell^+, \ell^-)$. 
One has $\ell =\ell^+ \ast \ell^-$. This follows from
\[
\ell^+\ast \ell^- = a^+ \ast a^- + 2 B^+ v^+ \ast B^-v^- = a + 2Bv = \ell.
\]
Vice versa, the map  
\[
(\gamma,\ell^+, \ell^-)\mapsto\ell:=\ell^+ \ast \ell^-
\]
is clearly the desired inverse.

Next, we compare the exponents of $q$. Since $\ell=a+2Bv$, one has
\begin{align}
\langle \ell, \ell\rangle &= \langle a, a\rangle + 4 \langle a, Bv\rangle + 4\langle Bv, Bv\rangle\nonumber\\
\label{eq:ellell}
&= \langle a, a\rangle + 4 a^T v + 4v^T Bv.
\end{align}
Similarly, since $\ell^\pm=a^\pm+2B^\pm v^\pm$, one has
\[
\langle \ell^\pm, \ell^\pm\rangle = \langle a^\pm, a^\pm\rangle + 4 \left(a^\pm\right)^T v^\pm + 4\left(v^\pm\right)^T B^\pm v^\pm.
\]
Since $a=a^+\ast a^-$ and using \eqref{eq:vvplusvminus}, one directly verifies that
\begin{equation}
\label{eq:astdotprop}
a^T v = \left(a^+\right)^T v^+ + \left(a^-\right)^T v^-.
\end{equation}
Similarly, using \eqref{eq:Bast}, one has
\begin{equation}
\label{eq:astBdotprop}
v^T Bv = \left(v^+\right)^T B^+ v^+ + \left(v^-\right)^T B^- v^-.
\end{equation}
Replacing \eqref{eq:astdotprop} and \eqref{eq:astBdotprop} in \eqref{eq:ellell} and simplifying, one obtains
\[
\langle \ell, \ell\rangle = \langle a,a\rangle - \langle a^+, a^+\rangle - \langle a^-, a^-\rangle + \langle \ell^+, \ell^+\rangle + \langle \ell^-, \ell^-\rangle.
\]
This together with the fact that $\mathrm{tr} \, B=\mathrm{tr} \, B^+ + \mathrm{tr} \, B^-$ implies that the exponent of $q$ for the contribution given by $\ell$ to the left-hand side matches the exponent of $q$ for the contribution given by $(\gamma, \ell^+, \ell^-)$ to the right-hand~side.

The matching of the exponents of $t$ is equivalent to the identity
\[
\xi^{-1}(\ell) = (\xi^+)^{-1}(\ell^+) + (\xi^-)^{-1}(\ell^-)
\]
which follows from $\ell=\ell^+\ast\ell^-$ and the fact that $\xi$ and $\xi^\pm$ all have equal value at the vertex $v_0$.
The matching of the exponents of $u$ follows similarly.

Finally, we compare the coefficients of the two sides. 
From the definition \eqref{eq:Kxn}, one has
\[
K_{\xi(v_0),\delta_0}(z) = K_{\xi(v_0),1+\delta^+_0}(z)\, K_{\xi(v_0),1+\delta^-_0}(z)
\]
where $\delta^\pm_0$ is the degree of $v^\pm_0$ in $\Gamma^\pm$ and $\delta_0 = \delta^+_0 + \delta^-_0$ is the degree of $v_0$ in $\Gamma$. 
Since $\ell= \ell^+ \ast \ell^-$, one has $\ell_0 = \ell_0^+ + \ell_0^-$, where $\ell_0$ is the component of $\ell$ corresponding to $v_0$, and similarly $\ell_0^\pm$ is the component of $\ell^\pm$ corresponding to $v_0^\pm$.
This implies
\begin{equation}
\label{eq:PwlPplusPminus}
\left[ K_{\xi(v_0),\delta_0}(z) \right]_{\ell_0} = \left[ z^{-\ell^+_0}K_{\xi(v_0),1+\delta^+_0}(z) \, z^{-\ell^-_0}K_{\xi(v_0),1+\delta^-_0}(z) \right]_0.
\end{equation}
The contribution given by vertices $v\neq v_0$ to the two sides equals
\[
\prod_{v\neq v_0} \left[ K_{\xi_v,\deg v}(z_v) \right]_{\ell_v}.
\]
Multiplying this on both sides of \eqref{eq:PwlPplusPminus} 
yields the equality of the coefficients 
\[
c_{\Gamma, \xi} (\ell) = \left[c_{\Gamma^+, \xi^+, v_0} (\ell^+) \, c_{\Gamma^-, \xi^-, v_0} (\ell^-) \right]_0
\]
on the two sides of the identity, 
hence the statement.
\end{proof}

%%%%%%%%%%%%%%%%%%%%%%%%%%%%%
%%%%%%%%%%%%%%%%%%%%%%%%%%%%%
%%%%%%%%%%%%%%%%%%%%%%%%%%%%%

\section{On the splitting move}

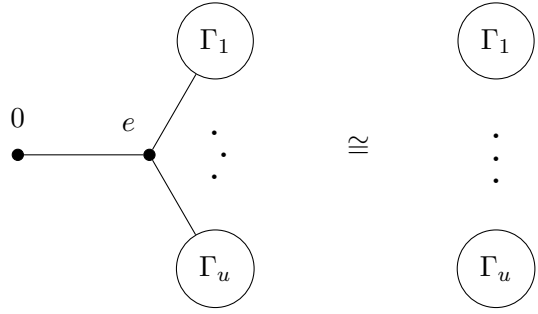
\begin{figure}[t]
\hspace{-0.8cm}
\[
\begin{tikzpicture}[baseline={([yshift=0ex]current bounding box.center)}]
      \path(0,0) ellipse (5 and 5);
      \tikzstyle{level 1}=[counterclockwise from=-90,level distance=15mm,sibling angle=120]
      \node [draw,circle,inner sep=5] (A1) at (60:5) {${\Gamma_1}$};
      \tikzstyle{level 1}=[counterclockwise from=-90,level distance=15mm,sibling angle=120]
      \node [draw,circle,inner sep=5] (A2) at (-60:5) {${\Gamma_u}$};
      \tikzstyle{level 1}=[counterclockwise from=180,level distance=12mm,sibling angle=60]
      \node [draw,circle,fill, inner sep=1.5, label={[label distance=8]120:$e$}] (A0) at (0:0) {}
      	    child [grow=45] {node[draw=none, right=0.5] {$\textbf{.}$} edge from parent[draw=none]}
	    child [grow=0] {node[draw=none, right=0.5] {$\textbf{.}$} edge from parent[draw=none]}
	    child [grow=-45] {node[draw=none, right=0.5] {$\textbf{.}$} edge from parent[draw=none]};
      \tikzstyle{level 1}=[counterclockwise from=180,level distance=12mm,sibling angle=60]
      \node [draw,circle,fill, inner sep=1.5, label={[label distance=8]90:$0$}] (A3) at (180:5) {};      
	\path (A0) edge []  node[auto, above=0.5, label={180:}]{}
      				  node[near end, below=0.1, label={-90:}]{} (A1);
	\path (A0) edge []  node[auto, above=0.5, label={180:}]{}
      				  node[near end, below=0.1, label={-90:}]{} (A2);
	\path (A0) edge []  node[auto, above=0.5, label={180:}]{}
      				  node[near end, below=0.1, label={-90:}]{} (A3);				  
    \end{tikzpicture}
\qquad    \cong 
\begin{tikzpicture}[baseline={([yshift=0ex]current bounding box.center)}]
      \path(0,0) ellipse (2 and 5);
      \tikzstyle{level 1}=[counterclockwise from=-90,level distance=15mm,sibling angle=120]
      \node [draw,circle,inner sep=5] (A1) at (60:5) {${\Gamma_1}$}
      	    child [grow=-90] {node[draw=none, below=1] {$\textbf{.}$} edge from parent[draw=none]}
	    child [grow=-90] {node[draw=none, below=0.7] {$\textbf{.}$} edge from parent[draw=none]}
	    child [grow=-90] {node[draw=none, below=1.3] {$\textbf{.}$} edge from parent[draw=none]};
      \tikzstyle{level 1}=[counterclockwise from=-90,level distance=15mm,sibling angle=120]
      \node [draw,circle,inner sep=5] (A2) at (-60:5) {${\Gamma_u}$};
    \end{tikzpicture}
\]
\caption{Neumann's move ($\mathrm{D}_{e,u}$).}
\label{fig:splitting}
\end{figure}

Neumann showed that if two plumbed $3$-manifolds obtained from two forests $F_1$ and $F_2$ admit an orientation-preserving diffeomorphism, then $F_1$ and $F_2$ are related by a sequence of the moves from Figure \ref{fig:Neumann}
together with the splitting move ($\mathrm{D}_{e,u}$) in Figure \ref{fig:splitting} \cite{neumann2006invariant}.
There the weight $e$ is an arbitrary integer and $u\geq 1$. 
For $u\geq 2$, this move relates a tree with a disjoint union of trees $\Gamma_1, \dots, \Gamma_u$.

For $Q=A_1$, Ri showed that his $(q,t)$-series is not invariant under the splitting move \cite{ri2023refined}.
Here, we make explicit how our $(q,t)$-series in the case of arbitrary root lattices varies under the splitting move.

By Remark \ref{rmk:Du>=3} at the end of this section, it is enough to consider the case $u=2$.
Let $\Gamma_\circ$ be the plumbing tree on the left-hand side in Figure \ref{fig:splitting}. 
We show that the $(q,t)$-series for $\Gamma_\circ$ decomposes as a sum of products of certain restrictions of the $(q,t)$-series for $\Gamma_1$ and $\Gamma_2$ times an additional $(q,t)$-series. 

Let $v_0$ and $v_e$ be the vertices of $\Gamma_\circ$ weighted by $0$ and $e$, respectively. For $i\in\{1,2\}$, let $v^i_{*}$ be the vertex of $\Gamma_i$ incident to $v_e$ in $\Gamma_\circ$. The degree of $v^i_{*}$ in $\Gamma_\circ$ is one more than  its degree in $\Gamma_i$.

%After possibly applying the Neumann move (A$-$) to $\Gamma_\circ$ along the edge incident to $v_e$ and $v^i_*$  and correspondingly the Neumann move (B$-$) to $\Gamma_i$, we can assume without loss of generality that $v^i_*$ has degree $1$ in $\Gamma_i$ and thus degree $2$ in $\Gamma_\circ$. 

Assume that $\Gamma_\circ$, $\Gamma_1$ and $\Gamma_2$ are reduced and that $\Gamma_1$ and $\Gamma_2$ have invertible framing matrices. Then necessarily $\Gamma_\circ$ has invertible framing matrix as well --- see the following \eqref{eq:sigmasplitting}.
Moreover, we proceed under the assumption that $v^i_*$ has degree $1$ in $\Gamma_i$ and thus degree $2$ in $\Gamma_\circ$.
\footnote{In principle, one could apply the Neumann move (A$-$) to $\Gamma_\circ$ along the edge incident to $v_e$ and $v^i_*$,  and correspondingly (B$-$) to $\Gamma_i$ (or the Neumann moves (A$+$) and (B$+$), respectively) in order to satisfy this degree assumption. However, in some cases these Neumann moves could introduce a reducible vertex in $\Gamma_i$. Thus, this assumption  on the degrees is restrictive. A similar assumption was also used in the splitting formula in \cite{ri2023refined}.}

Let $\tau_\circ=(Q, a^\circ, \xi_\circ)$ be a tuple for $\Gamma_\circ$ as in \eqref{eq:tau}.
Write $a^\circ$ as 
\[
a^\circ =\left(a^\circ_0, a^\circ_e, a^1_*-2\rho, a^1_\sharp,  a^2_*-2\rho, a^2_\sharp  \right).
\]
Here the  entry $a^\circ_0$ corresponds to $v_0$, the entry $a^\circ_e$ to $v_e$, the entries $a^i_*-2\rho$ to $v^i_{*}$, and the subtuple $a^i_\sharp$ to the remaining vertices of $\Gamma_i$. The map
\[
a^\circ \mapsto \left( a^1,  a^2\right)
\quad \mbox{with}\quad
a^i:=\left( a^i_*, a^i_\sharp \right)
\]
induces an isomorphism of the spaces of $\spinc$-structures 
\[
\spinc_Q\left(\Gamma_\circ\right)
\rightarrow 
\spinc_Q\left(\Gamma_1\right)\times \spinc_Q\left(\Gamma_2\right) 
\qquad 
\left[a^\circ\right] \mapsto
 \left[\left(a^1, a^2\right)\right]  .
\]

Next, we focus on the Weyl assignments.  
The Weyl assignment $\xi_\circ$ on $\Gamma_\circ$ uniquely determines Weyl assignments $\xi_ i$ on $\Gamma_i$ such that 
$\xi_\circ$ and $\xi_i$ have the same values on the vertices of $\Gamma_i$ other than $v^i_*$ for $i\in\{1,2\}$.
Since $v^i_*$ has degree $2$ in $\Gamma_\circ$, one has necessarily $\xi_\circ(v^i_*)=1_W$. 
Define $\tau_i:=(Q, a^i, \xi_i)$ for $i\in\{1,2\}$.

Next, for $w\in W$ and a plumbing tree $\Gamma$ whose vertices are ordered so that the first vertex is a leaf,
 define a restricted $(q,t)$-series for $\Gamma$ as
\[
\mathsf{Y}^w_{\tau}\left(\Gamma; q,t\right) :=
(-1)^{|\Delta^+| \,\pi}
q^{\frac{1}{2}(3\sigma-\mathrm{tr}\,B)\langle \rho, \rho \rangle} \sum_{\ell\in R^w}
c_{\Gamma, \xi} (\ell)
\,t^{\xi^{-1}(\ell)}
\,q^{-\frac{1}{8}\langle \ell, \ell\rangle}
\]
where
\[
R^w:=\left(a + 2BQ^s\right) \cap \left(\{2w(\rho)\}\times Q^{s-1}\right).
\]
In other words, 
the series $\mathsf{Y}^w_{\tau}\left(\Gamma; q,t\right)$ is obtained by restricting the sum in the series
$\mathsf{Y}_{\tau}\left(\Gamma; q,t\right)$ over only those $\ell\in a + 2BQ^s$ whose first entry (which corresponds to a leaf) is fixed equal to $2w(\rho)$. 

By definition, one has
\[
\mathsf{Y}_{\tau}\left(\Gamma; q,t\right) 
= \sum_{w\in W}
\mathsf{Y}^w_{\tau}\left(\Gamma; q,t\right).
\]
The various series $\mathsf{Y}^w_{\tau}\left(\Gamma; q,t\right)$ are not separately invariant under the Neumann moves from Figure \ref{fig:Neumann}.
We  use the restricted series in the following statement about the splitting move:

\begin{theorem}
\label{thm:Du2}
With assumptions as above, one has
\begin{align*}
\mathsf{Y}_{\tau_\circ}\left(\Gamma_\circ; q,t\right) 
=\,\,&
\sum_{w\in W}
\mathsf{Y}^w_{\tau_1}\left(\Gamma_1; q,t\right) \mathsf{Y}^w_{\tau_2}\left(\Gamma_2; q,t\right) \\
& \qquad 
\sum_{\alpha\in Q}
(-1)^{\ell(w)} k(\alpha)
\,q^{-\langle w(\rho), \rho + \alpha\rangle}
\,t^{d}
\end{align*}
where 
\begin{eqnarray*}
d&:=& -2\left(\xi_\circ(v_0)\right)^{-1}\xi_\circ(v_e)\,w(\rho)
-(2\rho+2\alpha)\\
&&- 2\left( \xi_1\left(v^1_*\right)\right)^{-1} \xi_\circ(v_e)\,w(\rho)
- 2\left( \xi_2\left(v^2_*\right)\right)^{-1} \xi_\circ(v_e)\,w(\rho).
\end{eqnarray*}
\end{theorem}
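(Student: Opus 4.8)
The plan is to follow the template of the proofs of Theorems~\ref{thm:qtseries} and~\ref{thm:gluing}: expand $\mathsf{Y}_{\tau_\circ}(\Gamma_\circ;q,t)$ from the definition, use the low degrees of the vertices $v_0,v_e,v^1_*,v^2_*$ of $\Gamma_\circ$ to factor the coefficients $c_{\Gamma_\circ,\xi_\circ}(\ell)$, build a bijection between the two sets of summation indices, and then split the exponents of $q$ and $t$. First I would record the combinatorics of the framing matrix. Ordering the vertices of $\Gamma_\circ$ as $v_0,v_e$, then those of $\Gamma_1$ with $v^1_*$ first, then those of $\Gamma_2$ with $v^2_*$ first, the matrix $B_\circ$ has the block $\bigl(\begin{smallmatrix}0&1\\1&e\end{smallmatrix}\bigr)$ on $\{v_0,v_e\}$, the blocks $B_1,B_2$ on $\Gamma_1,\Gamma_2$, and no further nonzero entries except the $v_e$--$v^i_*$ edges. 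Since the $v_0$-row couples only to $v_e$, a Schur-complement computation gives $\det B_\circ=-\det B_1\det B_2$ (so $B_\circ$ is invertible), $\sigma(B_\circ)=\sigma(B_1)+\sigma(B_2)$, $\pi(B_\circ)=1+\pi(B_1)+\pi(B_2)$, $\mathrm{tr}\,B_\circ=e+\mathrm{tr}\,B_1+\mathrm{tr}\,B_2$, and --- crucially --- the $\Gamma_1\sqcup\Gamma_2$-block of $B_\circ^{-1}$ equals $B_1^{-1}\oplus B_2^{-1}$, with the only coupling of $\{v_0,v_e\}$ to $\Gamma_i$ in $B_\circ^{-1}$ running through the $v_0$-row as $-(B_i^{-1})_{1,\bullet}$. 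Hence, writing $\ell=a^\circ+2B_\circ v$ with $v=(v_0,v_e,v^1,v^2)$ and setting $\ell^i:=a^i+2B_iv^i\in a^i+2B_iQ^{s_i}$, one has $\ell|_{\Gamma_i}=\ell^i+(2v_e-2\rho)\,e_1$, and $\langle\ell,\ell\rangle_{B_\circ}$ splits into $\langle\ell^1,\ell^1\rangle_{B_1}+\langle\ell^2,\ell^2\rangle_{B_2}$ plus explicit terms in $v_0,v_e$ and $\gamma_i:=(v^i)_1$.

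Next I would factor the coefficient. Because $\deg_{\Gamma_\circ}v_0=1$, $\deg_{\Gamma_\circ}v_e=3$, $\deg_{\Gamma_\circ}v^i_*=2$, the formulas \eqref{eq:cgammaxi} and \eqref{eq:Kxn} force $c_{\Gamma_\circ,\xi_\circ}(\ell)=0$ unless $\ell_{v_0}=2w_0(\rho)$ for some $w_0\in W$, $\ell_{v_e}=-\xi_\circ(v_e)(2\rho+2\alpha)$ for some $\alpha\in Q$, and $\ell_{v^1_*}=\ell_{v^2_*}=0$; these four vertices then contribute $(-1)^{\ell(w_0)}$, $(-1)^{\ell(\xi_\circ(v_e))}k(\alpha)$, $1$, $1$ respectively. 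Since both sides of the claimed identity depend only on $[a^\circ]$ (equivalently on $[a^1],[a^2]$), I may use the representative with $a^\circ_0=-2\rho$; then the shift relation turns $\ell_{v^i_*}=0$ into $\ell^i_1=-2w_0(\rho)$, which by the symmetry $-W(\rho)=W(\rho)$ (a consequence of $w_{\max}(\rho)=-\rho$ for the longest element $w_{\max}$) equals $2w(\rho)$ for the unique $w$ with $w_0=ww_{\max}$, so that $\ell^i\in R^w_i$ and $v_e=\rho-w(\rho)$. This yields a bijection $\ell\leftrightarrow(w,\alpha,\ell^1,\ell^2)$ with $w\in W$, $\alpha\in Q$, $\ell^i\in R^w_i$ (reconstructing $v_e$, $\gamma_i$, then $v_0$), under which, after reinstating the two degree-$1$ factors at $v^1_*,v^2_*$, the coefficient factors as $(-1)^{\ell(w_0)}(-1)^{\ell(\xi_\circ(v_e))}k(\alpha)\,(-1)^{2\ell(w)}\,c_{\Gamma_1,\xi_1}(\ell^1)\,c_{\Gamma_2,\xi_2}(\ell^2)$.

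It then remains to match exponents. For $q$, feeding the decomposition of $\langle\ell,\ell\rangle_{B_\circ}$ and the relations $\sigma(B_\circ)=\sigma(B_1)+\sigma(B_2)$, $\mathrm{tr}\,B_\circ=e+\mathrm{tr}\,B_1+\mathrm{tr}\,B_2$ into the $q$-prefactors, and using $\ell_{v_0}=-2w(\rho)$, $\xi_\circ(v_e)^{-1}(\ell_{v_e})=-(2\rho+2\alpha)$ and $v_e=\rho-w(\rho)$, the part of the $q$-exponent not already belonging to $\mathsf{Y}^w_{\tau_i}$ collapses to $q^{-\langle w(\rho),\rho+\alpha\rangle}$. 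For $t$, split $\xi_\circ^{-1}(\ell)=\sum_v\xi_{\circ,v}^{-1}(\ell_v)$, use $\xi_{\circ,v^i_*}=1_W$ with $\ell_{v^i_*}=0$ and the identity $\sum_{v\neq v^i_*}\xi_{i,v}^{-1}(\ell^i_v)=\xi_i^{-1}(\ell^i)-2(\xi_i(v^i_*))^{-1}(w(\rho))$ valid on $R^w_i$, together with the evaluations of $\xi_{\circ,v_0}^{-1}(\ell_{v_0})$ and $\xi_{\circ,v_e}^{-1}(\ell_{v_e})$, to obtain $\xi_\circ^{-1}(\ell)=\xi_1^{-1}(\ell^1)+\xi_2^{-1}(\ell^2)+d$ with $d$ as stated. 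Finally, $(-1)^{|\Delta^+|\pi(B_\circ)}(-1)^{\ell(w_0)}$ reorganizes, via $\pi(B_\circ)=1+\pi(B_1)+\pi(B_2)$, $|\Delta^+|=\ell(w_{\max})$ and $\ell(w_0)=\ell(w_{\max})-\ell(w)$, into $(-1)^{|\Delta^+|\pi(B_1)}(-1)^{|\Delta^+|\pi(B_2)}(-1)^{\ell(w)}$; collecting the sum over $\alpha$ into $\mathsf{R}_{w,\tau_\circ}(q,t)=\sum_{\alpha\in Q}(-1)^{\ell(w)}k(\alpha)q^{-\langle w(\rho),\rho+\alpha\rangle}t^d$ and recognizing the $\Gamma_i$-sums as $\mathsf{Y}^w_{\tau_i}(\Gamma_i;q,t)$ finishes the proof.

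The step I expect to be the main obstacle is this last one, the exponent bookkeeping: correctly identifying the Weyl element $w$ indexing the restricted series $\mathsf{Y}^w_{\tau_i}$ with the $w_{\max}$- and $\xi_\circ(v_e)$-twisted element produced by the degree-$1$ vertex $v_0$, and then verifying that the $q$-prefactors and the two signs $(-1)^{|\Delta^+|\pi(B_\circ)}$, $(-1)^{\ell(w_0)}$ assemble exactly into the prefactors of the $\mathsf{Y}^w_{\tau_i}$ and the $(-1)^{\ell(w)}$ inside $\mathsf{R}_{w,\tau_\circ}$, with the exponent of $t$ coming out precisely equal to $d$. The identification $\ell^i\in R^w_i$ (rather than with $\ell^i_1$ running over an arbitrary coset) is what forces the choice of representative $a^\circ$ and the use of the symmetry $-W(\rho)=W(\rho)$, and this is the point requiring the most care.
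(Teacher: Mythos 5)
Your proposal is correct and follows essentially the same route as the paper's proof: the same block-matrix computation of $B_\circ^{-1}$, $\sigma$, $\pi$, $\mathrm{tr}$, the same degree-based vanishing forcing $\ell_{v_0}\in 2W(\rho)$, $\ell_{v_e}\in-\xi_\circ(v_e)(2\rho+2Q_{\geq 0}\Delta^+)$, $\ell_{v^i_*}=0$, the same bijection $\ell\leftrightarrow(w,\alpha,\ell^1,\ell^2)$, and the same matching of $q$-, $t$-exponents and signs. The only differences are cosmetic: you derive the bijection through the coset parametrization $\ell=a^\circ+2B_\circ v$ with the normalization $a^\circ_0=-2\rho$ and write the $v_0$-sign via $w_{\max}$, whereas the paper performs the reindexing $w\mapsto\xi_\circ(v_e)^{-1}w$ up front and uses the identity $(-1)^{\ell(\iota w)}=(-1)^{|\Delta^+|}(-1)^{\ell(w)}$ from the Weyl denominator formula.
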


\begin{proof}
Let $x:=\xi_\circ(v_e)$. After replacing $w$ with $x^{-1}w$ and using the identities $\ell(x^{-1})=\ell(x)$ and $\langle x^{-1}w(\rho), 2\rho + 2\alpha\rangle = \langle w(\rho), x(2\rho + 2\alpha)\rangle$,
the statement is equivalent to
\begin{align}
\begin{split}
\label{eq:Du22}
\mathsf{Y}_{\tau_\circ}\left(\Gamma_\circ; q,t\right) 
=\,\,&
\sum_{w\in W}
\mathsf{Y}^w_{\tau_1}\left(\Gamma_1; q,t\right) \mathsf{Y}^w_{\tau_2}\left(\Gamma_2; q,t\right) \\
& \qquad 
\sum_{\alpha\in Q}
(-1)^{\ell(xw)} \, k(\alpha)
\,q^{-\langle w(\rho), x(\rho + \alpha)\rangle}
\,t^{d'}
\end{split}
\end{align}
where
\begin{align}
\begin{split}
\label{eq:d'}
d' :=&\,\, -2\left(\xi_\circ(v_0)\right)^{-1}w(\rho)
-(2\rho+2\alpha)\\
&- 2\left( \xi_1\left(v^1_*\right)\right)^{-1} w(\rho)
- 2\left( \xi_2\left(v^2_*\right)\right)^{-1} w(\rho).
\end{split}
\end{align}
We prove this version of the statement.

Let $B_\circ$ be the plumbing matrix for $\Gamma_\circ$ and $B_i$ for $\Gamma_i$ with $i\in\{1,2\}$.
A direct computation yields
\begin{align}
\begin{split}
\label{eq:sigmasplitting}
\pi\left( B_\circ\right) &= 1 + \pi\left( B_1\right) + \pi\left( B_2\right), \\
\sigma\left( B_\circ\right) &= \sigma\left( B_1\right) + \sigma\left( B_2\right),\\
\mathrm{tr}\left( B_\circ\right) &= e+ \mathrm{tr}\left( B_1\right) + \mathrm{tr}\left( B_2\right).
\end{split}
\end{align}
It follows that the factor in front of the sum in the series \eqref{eq:frontfactor} for $\Gamma_\circ$ has an extra factor 
with respect to the product of the analogous factors for $\Gamma_1$ and $\Gamma_2$
equal to
\begin{equation}
\label{eq:frontfactorRseries}
(-1)^{|\Delta^+|} \, q^{-\frac{1}{2}e \langle \rho, \rho \rangle}.
\end{equation}

For $i\in\{1,2\}$, the sum in the restricted $(q,t)$-series for $\Gamma_i$
is indexed by elements 
\[
\ell^i=\left( 2w(\rho), \ell^i_\sharp \right), 
\]
where $2w(\rho)$ is the entry corresponding to $v^i_{*}$, and $\ell^i_\sharp$ is the subtuple corresponding to the other vertices of $\Gamma_i$. 

For $w\in W$, $\alpha\in Q$, and
a pair of such $\ell^1$ and $\ell^2$, consider the tuple indexing the sum in the $(q,t)$-series for $\Gamma_\circ$ given by
\[
\ell^\circ =\left(-2w(\rho), -x(2\rho+2\alpha), 0, \ell^1_\sharp,  0, \ell^2_\sharp  \right).
\]
Here the  entry $-2w(\rho)$ corresponds to $v_0$, the entry $-x(2\rho+2\alpha)$ to $v_e$, the entries $0$ to $v^i_{*}$, and the subtuple $\ell^i_\sharp$ to the remaining vertices of $\Gamma_i$. Vice versa, the tuple $\ell^\circ$ uniquely determines $\left(w, \alpha, \ell^1, \ell^2\right)$, hence one has a bijection
\[
\ell^\circ \longleftrightarrow \left(w, \alpha, \ell^1, \ell^2 \right).
\]
In order to prove the statement, it suffices to verify that the contribution of $\ell^\circ$ to the left-hand side of \eqref{thm:Du2} equals the contribution of $\left(w, \alpha, \ell^1, \ell^2 \right)$ to its right-hand side. 
As the contribution to the  left-hand side of \eqref{thm:Du2} indexed by an element $\ell$ not of type $\ell^\circ$ vanishes, i.e., $c_{\Gamma_\circ, \xi_\circ} (\ell)=0$ in this case, this will conclude the proof.

\medskip

First, we verify the exponents of $q$.
Writing
\[
B^{-1}_1=\left(
\begin{array}{ccc}
a_{11} & \dots & a_{1n}\\
\vdots & \ddots & \vdots\\
a_{n 1} & \dots & a_{nn}
\end{array}
\right)
\quad\mbox{and}\quad
B^{-1}_2=\left(
\begin{array}{ccc}
b_{11} & \dots & b_{1m}\\
\vdots & \ddots & \vdots\\
b_{m 1} & \dots & b_{mm}
\end{array}
\right),
\]
a direct block matrix computation as in \cite[(5.4)]{ri2023refined} shows that 
\[
B^{-1}_\circ=\left(
\begin{array}{cccccccc}
a_{11}+b_{11}-e & 1 & -a_{11}& \dots & -a_{1n}& -b_{11}& \dots & -b_{1m}\\
1 & 0 & 0 & \dots &0&0&\dots& 0\\
-a_{1 1} & 0 & a_{11} & \dots & a_{1n} & 0 & \dots & 0\\
\vdots & \vdots & \vdots & \ddots & \vdots & \vdots & \ddots & \vdots\\
-a_{n 1} & 0 & a_{n1} & \dots & a_{nn} & 0 & \dots & 0\\
-b_{1 1} & 0 & 0 & \dots & 0& b_{11} & \dots & b_{1m} \\
\vdots & \vdots & \vdots & \ddots & \vdots & \vdots & \ddots & \vdots\\
-b_{m 1} & 0 & 0 & \dots & 0& b_{m1} & \dots & b_{mm} 
\end{array}
\right).
\]
This implies that
\[
\langle \ell^\circ, \ell^\circ\rangle = \langle \ell^1, \ell^1\rangle + \langle \ell^2, \ell^2\rangle -4e\, \langle \rho, \rho\rangle +4\langle w(\rho), x(2\rho+2\alpha)\rangle.
\]
After multiplying by $-\frac{1}{8}$ and considering the power of $q$ from \eqref{eq:frontfactorRseries}, it follows that the power of $q$ contributed by $\ell^\circ$ to the left-hand side of \eqref{eq:Du22} equals the power of $q$ contributed by $\left(w, \alpha, \ell^1, \ell^2 \right)$ to its right-hand side.

\medskip

Next, we verify the exponent of $t$. Starting from the left-hand side, the entries of $\ell^\circ$ corresponding to $v_0$ and $v_e$ contribute the following summand to the exponent of $t$:
\begin{align*}
&-2\left(\xi_\circ(v_0)\right)^{-1}w(\rho) -\left(\xi_\circ(v_e)\right)^{-1} x(2\rho+2\alpha) \\
&= -2\left(\xi_\circ(v_0)\right)^{-1}w(\rho) -(2\rho+2\alpha).
\end{align*}
Since the entries of $\ell^\circ$ corresponding to $v^1_*$ and $v^2_*$ are zero, they do not contribute to the exponent of $t$. 
On the right-hand side, since the entries of $\ell^1$ and $\ell^2$ corresponding to $v^1_*$ and $v^2_*$ are equal to $2w(\rho)$, these
 entries contribute the following summand to the exponent of $t$:
\[
2\left( \xi_1\left(v^1_*\right)\right)^{-1} w(\rho)
+2\left( \xi_2\left(v^2_*\right)\right)^{-1} w(\rho).
\]
Adding the contributions of the vertices of $\Gamma_i$ other than $v^i_*$ for $i\in\{1,2\}$ and taking into account the exponent $d'$ from \eqref{eq:d'}, 
it follows that the power of $t$ contributed by $\ell^\circ$ to the left-hand side of \eqref{eq:Du22} equals the power of $t$ contributed by $\left(w, \alpha, \ell^1, \ell^2 \right)$ to its right-hand side.

\medskip

Finally, we verify the equality of the coefficients. 
Start from the coefficient $c_{\Gamma_\circ, \xi_\circ} (\ell_\circ)$ computed as in \eqref{eq:cgammaxi}.
The entry of $\ell^\circ$ corresponding to $v_0$ contributes the factor $(-1)^{\ell(\iota w)}$. 
A simple consequence of the Weyl denominator formula yields 
\[
(-1)^{\ell(\iota w)}=(-1)^{|\Delta^+|}\,(-1)^{\ell(w)} \qquad \mbox{for $w\in W$}
\]
where $\iota$ is as in \eqref{eq:iota} --- this indeed follows by comparing the coefficients of $z^{2\iota w(\rho)}$ on the two sides of  \eqref{eq:Weyldenomformula}.
Moreover, since $K_{x,3}(z)$ is as in \eqref{eq:Ktwist}, the entry of $\ell^\circ$ corresponding to $v_e$ contributes the factor $(-1)^{\ell(x)}\, k(\alpha)$. Multiplying these two quantities, we obtain that the entries of $\ell^\circ$ corresponding to $v_0$ and $v_e$ contribute the factor
\[
(-1)^{|\Delta^+|}\,(-1)^{\ell(xw)} \, k(\alpha).
\]
The entries of $\ell^\circ$ corresponding to $v^1_*$ and $v^2_*$ are both zero and contribute factors of $1$. Correspondingly, the entry of $\ell^i$  equal to $2w(\rho)$ contributes the factor $(-1)^{\ell(w)}$ to $c_{\Gamma_i, \xi_i} (\ell^i)$. Multiplying by the entries corresponding to the vertices of $\Gamma_i$ other than $v^i_*$ for $i\in\{1,2\}$, yields 
\[
(-1)^{|\Delta^+|} \, c_{\Gamma_\circ, \xi_\circ} (\ell^\circ) = (-1)^{\ell(xw)} k(\alpha) \, c_{\Gamma_1, \xi_1} (\ell^1) \, c_{\Gamma_2, \xi_2} (\ell^2).
\]
Taking into account the factor contributed by \eqref{eq:frontfactorRseries}, it follows that 
 $\ell^\circ$ contributes the same $(q,t)$-monomial to the left-hand side of \eqref{eq:Du22} as  $\left(w, \alpha, \ell^1, \ell^2 \right)$ to its right-hand side. Hence the statement.
\end{proof}

\begin{remark}
\label{rmk:Du>=3}
When $u=1$, the Neumann move ($\mathrm{D}_{e,u}$) follows from the other five Neumann moves amongst trees. Indeed, when $e=0$, the move ($\mathrm{D}_{0,1}$) follows from move (C). When $e>0$, the move ($\mathrm{D}_{e,1}$) follows by applying the moves (A$-$), (B$-$), and then ($\mathrm{D}_{e-1,1}$). When $e<0$, the move ($\mathrm{D}_{e,1}$) follows by applying the moves (A$+$), (B$+$), and then ($\mathrm{D}_{e+1,1}$). Thus the move ($\mathrm{D}_{e,1}$) follows by induction on $e$.

Also, we remark that the case $u\geq 3$ follows by induction on the case $u=2$ and the other Neumann moves amongst trees. 
Hence the emphasis on the $u=2$ case here.
\end{remark}

%%%%%%%%%%%%%%%%%%%%%%%%%%%%%
%%%%%%%%%%%%%%%%%%%%%%%%%%%%%
%%%%%%%%%%%%%%%%%%%%%%%%%%%%%

\section{Examples}
\label{sec:ex}

\subsection{Lens spaces}
\label{sec:Lens}
Here we consider the case when $M$ is the lens space $L(p,1)$ for an integer $p\neq 0$.
We can assume that $\Gamma$ consists of a single vertex weighted by $p$. 
Hence $\spinc_Q\left(M \right) \cong \frac{2\,Q}{2p\,Q}$.
From Theorem \ref{thm:qtseriesinvarianceintroreduced}(ii), we can assume that the Weyl assignment is $\xi=(1_W)$. 

For $Q=A_1$ and $|p|\geq 3$, there are exactly three $\spinc$-structures that result in a non-zero $(q,t)$-series, namely:
\[
\mathsf{Y}_\tau\left(q,t\right) =
\left\{
\begin{array}{ll}
2\sigma\,q^{\frac{1}{4}(3\sigma-p)} & \mbox{for $a\equiv0$ mod $2p$,}\\[5pt]
 -\sigma\,q^{\frac{1}{4}(3\sigma-p)-\frac{1}{p}}\, t^2 & \mbox{for $a\equiv  2$ mod $2p$,}\\[5pt]
  -\sigma\,q^{\frac{1}{4}(3\sigma-p)-\frac{1}{p}}\, t^{-2} & \mbox{for $a\equiv - 2$ mod $2p$,}\\[5pt]
0   & \mbox{otherwise.}\\
\end{array}
\right.
\]
Here $\sigma=\mathrm{sign}(p)$.
For $|p|\leq 2$, some of the above congruence classes coincide, and thus the corresponding contributions add up.

\subsection{Brieskorn spheres}
\label{sec:Brieskorn}
Consider the case when $M$ is a Brieskorn homology sphere.
After reviewing the $q$-series for this $M$ and its independence on $\xi$, we give a closed formula for the $(q,t)$-series and show how the latter varies with~$\xi$. 

The plumbing tree $\Gamma$ can be assumed to be a star graph with a vertex of degree $3$ and with a definite plumbing matrix.
Thus for a tuple $\tau=\left(Q, a, \xi\right)$, the $q$-series $\mathsf{Y}_\tau\left(q\right)=\mathsf{Y}_\tau\left(q,1\right)$ exists. 
Since $M$ is a homology sphere, $a=\delta$ is the unique $\spinc$-structure on $M$.

Since $K_{x,n}(z)$ in \eqref{eq:Kxn} does not depend on $x$ for $n\in\{0,1,2\}$, it follows that 
for a tuple $\tau=\left(Q, a, \xi\right)$, the $q$-series $\mathsf{Y}_\tau\left(q\right)$ depends on $\xi$ at most up to the value of $\xi$ at the vertex of degree $3$.
Then applying Theorem \ref{thm:qtseriesinvarianceintroreduced}(ii), we deduce that for a tuple $\tau=\left(Q, a, \xi\right)$, the $q$-series $\mathsf{Y}_\tau\left(q\right)$ is in fact independent of $\xi$ and thus equals the series $\widehat{Z}(q)$ computed in this case for $Q=A_1$ in \cite{gukov2021two} and for arbitrary $Q$ in \cite{park2020higher}.

Specifically, select an order $v_0, v_1, v_2, v_3, \dots$ of the vertex set of $\Gamma$ so that $v_0$ is the vertex of degree $3$ and $v_1, v_2, v_3$ are the three vertices of degree $1$. A computation reviewed in \cite[\S 6.1]{MT1} shows that
\[
\mathsf{Y}_\tau\left(q\right) =
q^{-\frac{1}{2}(3s+\mathrm{tr}\,B)\langle \rho, \rho \rangle} 
\mathop{\sum_{\gamma\in Q}}_{w_1, w_2, w_3\in W}  (-1)^{\ell(w_1w_2w_3)} d\left(\gamma\right)q^{-\frac{1}{8}\langle f, f\rangle}
\]
where 
\begin{align*}
f=\left(\gamma, 2w_1(\rho), 2w_2(\rho), 2w_3(\rho), 0,\dots,0 \right)\in Q^s \\
\mbox{with $\gamma\in 2\rho+2Q$ and $w_1,w_2,w_3\in W$}
\end{align*}
and 
\[
d\left(\gamma\right) := k\left( -\frac{1}{2}\,\gamma -\rho\right)
\]
with $k(\,\,)$ equal to the Kostant partition function as in \eqref{eq:Kostant}.

Next, we show how this computation can be refined to include the variable~$t$ and how on the contrary the resulting $(q,t)$-series varies with $\xi$. 
For a Weyl assignment $\xi=(\xi_0, \xi_1, \xi_2, \xi_3, 1_W, \dots, 1_W)$, the exponent of $t$ as in \eqref{eq:exp_t} is 
\[
e(\xi, f):=\xi_0^{-1}(\gamma) + 2 \,\xi_1^{-1} w_1(\rho) + 2 \,\xi_2^{-1} w_2(\rho) + 2\,\xi_3^{-1} w_3(\rho).
\]
Then the $(q,t)$-series is
\[
\mathsf{Y}_\tau\left(q,t\right) =
q^{-\frac{1}{2}(3s+\mathrm{tr}\,B)\langle \rho, \rho \rangle} 
\mathop{\sum_{\gamma\in Q}}_{w_1, w_2, w_3\in W}  (-1)^{\ell(w_1w_2w_3)} d\left(\gamma\right)\, t^{e(\xi, f)}\,q^{-\frac{1}{8}\langle f, f\rangle}.
\]
E.g., when $\Gamma$ is negative definite, $\xi=(1_W, \dots, 1_W)$ satisfies the conditions of a Weyl assignment. For this choice,
the series $\mathsf{Y}_\tau\left(q,t\right)$ coincides with the series $\Zdhat\left(q,t^2\right)$ from \cite{akhmechet2023lattice},  computed for Brieskorn spheres in \cite{liles2023infinite}. 

Since $a$ is the unique $\spinc$-structure, Theorem \ref{thm:qtseriesinvarianceintroreduced}(ii) implies that 
two Weyl assignments $\xi$ and $\xi'$ in the same orbit by the $W$-action (i.e., $\xi'=w(\xi)$ for some $w\in W$) yield the same series $\mathsf{Y}_\tau\left(q,t\right)$. However, when $\xi$ and $\xi'$ are not in the same orbit by the $W$-action, the resulting series $\mathsf{Y}_\tau\left(q,t\right)$ are in general distinct, although equal at $t=1$.

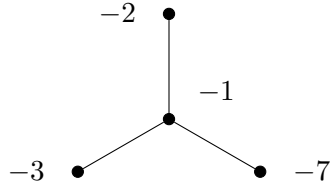
\begin{figure}[t]
\[
\begin{tikzpicture}[baseline={([yshift=0ex]current bounding box.center)}]
      \path(0,0) ellipse (2 and 2);
      \tikzstyle{level 1}=[counterclockwise from=90, level distance=40mm, sibling angle=120]
      \node [draw, circle, fill, inner sep=1.5, label={[label distance=10]30:$-1$}] (A0) at (0:0) {}
            child {node [draw, circle, fill, inner sep=1.5, label={[label distance=10]180:$-2$}]{}}
            child {node [draw, circle, fill, inner sep=1.5, label={[label distance=10]180:$-3$}]{}}
	    child {node [draw, circle, fill, inner sep=1.5, label={[label distance=10]0:$-7$}]{}};
    \end{tikzpicture}
\]
\caption{The Brieskorn sphere $\Sigma(2,3,7)$.}
\label{fig:S237}
\end{figure}

\subsection{The Brieskorn sphere $\Sigma(2,3,7)$}
As a special case of \S\ref{sec:Brieskorn}, consider $M=\Sigma(2,3,7)$.
This is obtained from the negative-definite plumbing tree in Figure \ref{fig:S237}.
For $Q=A_1$ and $\xi=(1_W, 1_W, 1_W, 1_W)$, one has
\begin{eqnarray*}
\mathsf{Y}_\tau\left(q,t\right) &=& 
q^{1/2}\left(t^2 -q -q^5 +q^{10}\,t^{-2} -q^{11} +q^{18}\,t^{-2} +q^{30}\,t^{-2} -q^{41}\,t^{-4} \right.\\
&&
\left. \,\,\qquad + q^{43} -q^{56}\,t^{-2} -q^{76}\,t^{-2} +q^{93}\,t^{-4}
+  O(q^{96})\right) .
\end{eqnarray*}
Here $O(q^x)$ stands for $q^x$ times a series in non-negative powers of $q$.
This series matches the computations of $\Zdhat\left(q,t^2\right)$ in \cite{liles2023infinite}.

Still for $Q=A_1$, the input $\xi=(\iota, \iota, \iota, \iota)$ yields the same series, as per Theorem \ref{thm:qtseriesinvarianceintroreduced}(ii). However, the input $\xi=(\iota, 1_W, 1_W, 1_W)$ yields a distinct series:
\begin{eqnarray*}
\mathsf{Y}_\tau\left(q,t\right) &=& 
q^{1/2}\left(t^{-4} 
-q\,t^{-2} 
-q^5\,t^{-2} 
 +q^{10} 
 -q^{11}\,t^{-2} 
  +q^{18} 
  +q^{30} 
  -q^{41}\,t^{2} \right.\\
&&
\left. \,\,\qquad + q^{43}\,t^{-6} 
 -q^{56}\,t^{-4} 
 -q^{76}\,t^{-4} 
 +q^{93}\,t^{2}
+  O(q^{96})\right) .
\end{eqnarray*}
Overall, there are $8$ Weyl assignments in different orbits of $W=\mathbb{S}_{2}$ that yield $8$ distinct series.

For $Q=A_2$ and $\xi=(1_W, \dots, 1_W)$, one has
\begin{eqnarray*}
\mathsf{Y}_\tau\left(q,t\right) &=& 
q^2 \, t_1^4 \, t_2^4 
-q^3 \left( t_1^4\, t_2^2 + t_ 1^2\, t_2^4\right)
+q^5 \left( t_1^2 + t_2^2\right)
+q^6 (2\, t_1^2 \, t_2^2 -1)\\
&&
-q^7 \left( t_1^4\, t_2^2 + t_ 1^2\, t_2^4\right)
-2q^{10} 
+4q^{11}\, t_1^2 \, t_2^2 
+q^{12} \left( t_1^4 + t_2^4\right)\\
&&
-q^{13} \left( t_1^2 + t_2^2 + t_1^4\, t_2^2 + t_ 1^2\, t_2^4 \right)
+q^{15}\left( t_1^{-2} + t_2^{-2}\right)\\
&&
-q^{16}\left(2 + t_1^2 + t_2^2 +t_1^2\,t_2^{-2} + t_1^{-2}\,t_2^2 \right)
+O(q^{17}).
\end{eqnarray*}
Specializing at $t_1=t_2=1$ recovers the $q$-series from \cite{park2020higher}.

%%%%%%%%%%%%%%%%%%%%%%%%%%%%
%%%%%%%%%%%%%%%%%%%%%%%%%%%%
%%%%%%%%%%%%%%%%%%%%%%%%%%%%

\section*{Acknowledgments} 

AHM was partially supported by  NSF award DMS-2204148. 
NT was partially support by NSF award DMS-2404896 and a Simons Foundation's Travel Support for Mathematicians gift.
The authors thank the referee for their careful reading and detailed comments.

%%%%%%%%%%%%%%%%%%%%%%%%%%%%%%%%%%%%%%
\bibliographystyle{alpha}
\bibliography{Biblio}

\begin{thebibliography}{GPPV20}

\bibitem[AJK23]{akhmechet2023lattice}
Rostislav Akhmechet, Peter~K Johnson, and Vyacheslav Krushkal.
\newblock Lattice cohomology and {$q$}-series invariants of 3-manifolds.
\newblock {\em Journal f{\"u}r die reine und angewandte Mathematik (Crelles
  Journal)}, 2023(796):269--299, 2023.

\bibitem[AJP25]{akhmechet2025knot}
Rostislav Akhmechet, Peter~K Johnson, and Sunghyuk Park.
\newblock Knot lattice homology and $ q $-series invariants for plumbed knot
  complements.
\newblock {\em Quantum Topology}, 2025.

\bibitem[Bou02]{MR1890629}
Nicolas Bourbaki.
\newblock {\em Lie groups and {L}ie algebras. {C}hapters 4--6}.
\newblock Elements of Mathematics (Berlin). Springer-Verlag, Berlin, 2002.
\newblock Translated from the 1968 French original by Andrew Pressley.

\bibitem[GM21]{gukov2021two}
Sergei Gukov and Ciprian Manolescu.
\newblock A two-variable series for knot complements.
\newblock {\em Quantum Topology}, 12(1), 2021.

\bibitem[GPPV20]{gukov2020bps}
Sergei Gukov, Du~Pei, Pavel Putrov, and Cumrun Vafa.
\newblock {BPS} spectra and 3-manifold invariants.
\newblock {\em Journal of Knot Theory and Its Ramifications}, 29(02):2040003,
  2020.

\bibitem[Hum72]{MR0323842}
James~E. Humphreys.
\newblock {\em Introduction to {L}ie algebras and representation theory},
  volume Vol. 9 of {\em Graduate Texts in Mathematics}.
\newblock Springer-Verlag, New York-Berlin, 1972.

\bibitem[LM23]{liles2023infinite}
Louisa Liles and Eleanor McSpirit.
\newblock Infinite families of quantum modular 3-manifold invariants.
\newblock {\em Communications in Number Theory and Physics, to appear,
  arXiv:2306.14765}, 2023.

\bibitem[LZ99]{lawrence1999modular}
Ruth Lawrence and Don Zagier.
\newblock Modular forms and quantum invariants of 3-manifolds.
\newblock {\em Asian Journal of Mathematics}, 3(1):93--108, 1999.

\bibitem[MT24]{MT1}
Allison~H. Moore and Nicola Tarasca.
\newblock Root lattices and invariant series for plumbed 3-manifolds.
\newblock {\em Preprint, arXiv:2405.14972 [math.GT]}, 2024.

\bibitem[Mur24]{murakami2024proof}
Yuya Murakami.
\newblock A proof of a conjecture of {Gukov--Pei--Putrov--Vafa}.
\newblock {\em Communications in Mathematical Physics}, 405(11):274, 2024.

\bibitem[N{\'e}m22]{MR4510934}
Andr\'{a}s N{\'e}methi.
\newblock {\em Normal surface singularities}, volume~74 of {\em Ergebnisse der
  Mathematik und ihrer Grenzgebiete. 3. Folge. A Series of Modern Surveys in
  Mathematics}.
\newblock Springer, 2022.

\bibitem[Neu81]{neumann1981calculus}
Walter~D Neumann.
\newblock A calculus for plumbing applied to the topology of complex surface
  singularities and degenerating complex curves.
\newblock {\em Transactions of the American Mathematical Society},
  268(2):299--344, 1981.

\bibitem[Neu06]{neumann2006invariant}
Walter~D Neumann.
\newblock An invariant of plumbed homology spheres.
\newblock In {\em Topology Symposium Siegen 1979: Proceedings of a Symposium
  Held at the University of Siegen, June 14--19, 1979}, pages 125--144.
  Springer, 2006.

\bibitem[Par20]{park2020higher}
Sunghyuk Park.
\newblock Higher rank {$\widehat{Z}$ and $F_K$}.
\newblock {\em SIGMA. Symmetry, Integrability and Geometry: Methods and
  Applications}, 16:044, 2020.

\bibitem[Ri23]{ri2023refined}
Song~Jin Ri.
\newblock Refined and generalized {$\hat{Z}$} invariants for plumbed
  3-manifolds.
\newblock {\em SIGMA. Symmetry, Integrability and Geometry: Methods and
  Applications}, 19:011, 2023.

\end{thebibliography}
%%%%%%%%%%%%%%%%%%%%%%%%%%%%%%%%%%%%%%

\end{document}